\newtheorem{theorem}{Theorem}[section]
\newtheorem{lemma}[theorem]{Lemma}
\newtheorem{proposition}[theorem]{Proposition}
\newtheorem{corollary}[theorem]{Corollary}
\theoremstyle{definition}
\newtheorem{definition}[theorem]{Definition}
\theoremstyle{remark}
\newtheorem{remark}[theorem]{Remark}
\numberwithin{equation}{section}
\begin{document}
\title{Unique geodesics for Thompson's metric}

\author{Bas Lemmens}
\address{School of Mathematics, Statistics \& Actuarial Science, Cornwallis Building, 
University of Kent, Canterbury, Kent CT2 7NF, UK. Tel: +44--1227823651, Fax: +44--1227827932\\ 
}
\curraddr{}
\email{B.Lemmens@kent.ac.uk}
\thanks{Bas Lemmens was supported by EPSRC grant EP/J008508/1}

\author{Mark Roelands}
\address{School of Mathematics, Statistics \& Actuarial Science, Cornwallis Building, 
University of Kent, Canterbury, Kent CT2 7NF, UK}
\curraddr{}
\email{mark.roelands@gmail.com}

\subjclass[2010]{Primary 53C22; Secondary 51Fxx, 53C60}

\keywords{Geodesics, Thompson's (part) metric, Hilbert's (projective) metric, cones, isometries}

\date{}
\begin{abstract}
In this paper a geometric characterization of the unique geodesics in  Thompson's metric spaces is presented. This characterization is used to prove a variety of other geometric results. Firstly, it will be shown that there exists a unique Thompson's metric geodesic connecting  $x$ and $y$ in the  cone of positive self-adjoint elements in a unital $C^*$-algebra if, and only if, the spectrum of $x^{-1/2}yx^{-1/2}$ is contained in $\{1/\beta,\beta\}$ for some $\beta\geq 1$. A similar result will be established for symmetric cones. 
Secondly, it will be shown that  if $C^\circ$ is the interior of a finite-dimensional closed cone $C$, then the Thompson's metric space $(C^\circ,d_C)$ can be quasi-isometrically embedded into a finite-dimensional normed space if, and only if, $C$ is a polyhedral cone. Moreover, $(C^\circ,d_C)$ is isometric to a finite-dimensional normed  space if, and only if, $C$ is a simplicial cone. It will also be shown that if $C^\circ$ is the interior of a strictly convex cone $C$ with $3\leq \dim C<\infty$, then every Thompson's metric isometry is projectively linear.   
\end{abstract}

\maketitle
\section{Introduction}
In \cite{Bi} Birkhoff showed that one can use Hilbert's (projective) metric and the contraction mapping principle to prove the existence and uniqueness of a positive eigenvector for a large class of linear operators that leave a closed cone $C$ in a 
Banach space invariant. 
An alternative to Hilbert's metric was introduced by Thompson in \cite{Tho}. Thompson's (part) metric, denoted here by $d_C$, has the advantage that it is a metric on each part of a cone $C$ rather than a metric between pairs of rays in each part. It has found numerous applications in the analysis of linear and nonlinear operators on cones, see for instance \cite{AGLN,HIR,LNBook,Nmem1} and the references therein.  Thompson's metric is also used to study the geometry of cones of positive operators \cite{ACS,CM,CPR,LW} and symmetric cones \cite{LL1,Lim1,Lim2,LP}, where it provides an alternative to the usual Riemannian metric. It also appears in the analysis of order-isomorphisms on cones, see \cite{NS1,NS2}.

Despite the frequent use  of Thompson's metric spaces in mathematical analysis, there are still many interesting aspects of their geometry  that remain to be explored. A number of individual results exist. For example, it is known that Thompson metric spaces are Finsler manifolds, see \cite{Nu}. Furthermore, on the cones of positive self-adjoint elements in unital $C^*$-algebras and symmetric cones, Thompson's metric possesses  certain non-positive curvature properties, see \cite{ACS,LL1}. On general closed cones Thompson's metric is semi-hyperbolic, see \cite{NW}. It is also known \cite[Section 2.2]{LNBook} that if $C^\circ$ is the interior of a closed polyhedral cone in a vector space $V$, then $(C^\circ, d_C)$ can be isometrically embedded into $(\mathbb{R}^m,\|\cdot\|_\infty)$, where $\|z\|_\infty = \max_i |z_i|$ is the {\em sup-norm} and $m$ is the number of facets of $C$. Moreover, if $C$ is an $n$-dimensional {\em simplicial cone} in $V$, that is to say, there exist linearly independent vectors $v_1,\ldots,v_n\in V$ such that 
$
C=\{\sum_i \lambda_i v_i\colon \lambda_i\geq 0\mbox{ for all }i\}$,   
then $(C^\circ,d_C)$ is isometric to $(\mathbb{R}^n,\|\cdot\|_\infty)$. 
Furthermore if $\Lambda_{n+1}=\{(s,x)\in \mathbb{R}\times \mathbb{R}^n\colon s^2-x_1^2-\cdots -x_n^2\geq 0\mbox{ and }s\geq 0\}$ is the {\em Lorentz cone}, then $(\Lambda_{n+1}^\circ, d_{\Lambda_{n+1}})$ contains an isometric copy of the real $n$-dimensional hyperbolic space. In fact, on the upper sheet of the hyperboloid $H =\{(s,x)\in \mathbb{R}\times\mathbb{R}^n\colon s^2 - x_1^2 -\cdots - x_n^2=1\}$, Thompson's metric coincides with the hyperbolic distance, see \cite{Lim2} or \cite[Section 2.3]{LNBook}. 

One of the main objectives of this paper is to give a geometric characterization of the unique geodesics in Thompson's metric spaces. This characterization is subsequently used to prove a variety of other results. 

In particular, we show in Section 5 that if $A_+^\circ$ is the interior of the cone of positive self-adjoint elements in a unital $C^*$-algebra $A$, then there exists a unique Thompson metric geodesic connecting $x$ and $y$ in $A_+^\circ$ if, and only if, $\sigma(x^{-1/2}yx^{-1/2})\subseteq \{\beta,1/\beta\}$ for some $\beta\geq 1$. Here $\sigma(z)$ denotes the spectrum of $z$. It turns out that a similar result holds for elements in a symmetric cone. In fact, we will prove in Section 6 that there exists a unique Thompson metric geodesic connecting $x$ and $y$ in a symmetric cone if, and only if, $\sigma(P(y^{-1/2}) x)\subseteq \{\beta,1/\beta\}$ for some $\beta\geq 1$.  Here $P$ is the quadratic representation. These results generalize \cite[Theorem 5.2]{Lim3} by Lim, who showed the equivalence for the cone of positive definite Hermitian matrices. 

The characterization will also be used to prove a number of geometric properties of Thompson's metric spaces.  For example we prove in Section 7 that  if $C$ is a finite-dimensional closed cone with nonempty interior, then $(C^\circ,d_C)$ can be quasi-isometrically embedded into  a finite-dimensional normed space if, and only if,  $C$ is a polyhedral cone.  Furthermore we show that a Thompson's metric space $(C^\circ,d_C)$ is isometric to an $n$-dimensional normed space if, and only if, $C$ is an $n$-dimensional simplicial cone.  Analogous results for Hilbert's metric spaces were obtained by Colbois and Verovic \cite{CV}, and  by Foertsch and Karlsson \cite{FoK}, see also \cite{Ber}. Our method of proof is similar to theirs, but interesting adaptations need to be made to make the arguments works. 

In the final section it will  be shown that if $C$ is a strictly convex cone with nonempty interior and $3\leq \dim C<\infty$, then every isometry of $(C^\circ,d_C)$ is projectively linear. This result complements recent work by Bosch\'e \cite{Bo} who determined the isometries for Thompson's metric on  symmetric cones, and work by Moln\'ar \cite{Mo} on Thompson's metric isometries on the cone of  positive self-adjoint operators on a Hilbert space. In \cite{dlH} de la Harpe proved a similar result for strictly convex  Hilbert's metric spaces. Our proof will appeal to his result. 

In the next section we  recall some basic concepts and results. 

\section{Thompson's metric}
Let $C$ be a  cone in a vector space $V$. So, $C$ is convex, $\lambda C\subseteq C$ for all $\lambda \geq 0$, and $C\cap(-C)=\{0\}$.  The cone $C$ induces a partial ordering $\leq_C$ on $V$ by $x\leq_C y$ if $y-x\in C$. For $x,y\in C$, we say that $y$ {\em dominates} $x$ if there exists $\beta>0$ such that $x\leq_C\beta y$. Given $x,y\in C$ we write $x\sim_C y$ if $y$ dominates $x$, and $x$ dominates $y$. In other words, $x\sim_C y$ if and only if there exist $0<\alpha\leq\beta$ such that $\alpha y\leq_C x\leq_C \beta y$. It is easy to verify that $\sim_C$ is an equivalence relation on $C$. The equivalence classes are called {\em parts} of $C$.  If $C$ is a finite-dimensional closed cone, then the parts are precisely the relative interiors of the faces of $C$, see \cite[Lemma 1.2.2]{LNBook}. Recall that a nonempty convex set $F\subseteq C$ is a {\em face} of $C$ if  $x,y\in C$ and $\lambda x+(1-\lambda)y\in F$ for some $0<\lambda<1$ implies $x,y\in F$. 
The {\em relative interior} of a convex set $S\subset V$ is its interior in the affine span of $S$. 

Given $x,y\in C$ such that $x\sim_C y$, we define 
\[
M(x/y;C) =\inf\{\beta>0\colon x\leq_C \beta y\} \mbox{ and } 
m(x/y;C) =\sup\{\alpha>0\colon \alpha y\leq_C x\}. 
\]
We simply write $M(x/y)$ and $m(x/y)$ if $C$ is clear from the context. Note that 
$m(x/y)=M(y/x)^{-1}$. 

\begin{definition}\label{Thompson}
On a  cone $C$ in a vector space $V$, {\em Thompson's metric}, $d_C\colon C\times C\to [0,\infty]$, is defined by 
\[
d_C(x,y)=\log\Big(\max\{M(x/y),M(y/x)\}\Big)
\]
for $x\sim_C y$ in $C$, and $d_C(x,y) =\infty$ otherwise. 
\end{definition}
This metric was introduced by Thompson in \cite{Tho}, who showed that $d_C$ is a metric on each part of $C$, when $C$ is a closed cone in a normed space. Furthermore, he showed that if $C$ is a closed cone in a Banach space $(V,\|\cdot\|)$, and $C$ is a normal cone,  i.e., there exists $\kappa>0$ such that $\|x\|\leq \kappa\|y\|$ whenever $x\leq_C y$, then 
$(P,d_C)$ is a complete metric space for each part $P$ of $C$, and the topology coincides with the norm topology on $P$. In particular, Thompson's metric  topology on the interior of a closed finite-dimensional cone coincides with the norm topology. 

It can be shown, see \cite[Appendix A.2]{LNBook}, that $d_C$ is a genuine metric on each part  if $C$ is an  {\em almost Archimedean} cone, i.e., if $x\in V$ and there exists $y\in V$ such that $-\epsilon y\leq_C x\leq_C \epsilon y$ for all $\epsilon>0$, then $x=0$. Almost Archimedean cones can be characterized by their intersections with 
finite-dimensional linear subspaces. To state this result  the following notation is convenient. 

Given an almost Archimedean cone $C$ in a vector space $V$ and $S\subseteq V$, we let $V(S)=\mathrm{span}\{S\}$. If $\dim V(S)<\infty$, then we define $C(S) = \overline{C\cap V(S)}$,  where the topology is the unique topology that turns $V(S)$ into a Hausdorff topological vector space. We denote the interior of $C(S)$ in $V(S)$ by $C(S)^\circ$, and its boundary in $V(S)$ by $\partial C(S)$.  Now the characterization of almost Archimedean cones can be stated as follows. 
\begin{lemma}\label{lem:aa}
A cone $C$ in a vector space $V$ is almost Archimedean if and only if for each finite dimensional subspace $W$ of $V$ we have that $C(W)$ is a cone. 
\end{lemma}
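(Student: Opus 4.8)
The plan is to prove the equivalence through its two contrapositives, reducing everything to ordinary convex analysis inside a fixed finite-dimensional subspace $W$. I would begin with the preliminary remark that for any such $W$ the set $C(W)=\overline{C\cap W}$ is the closure of the convex cone $K:=C\cap W$, hence is itself convex and invariant under multiplication by nonnegative scalars (for $\lambda>0$ scaling is a homeomorphism, and $0\in C(W)$). Consequently $C(W)$ fails to be a cone if and only if it fails to be pointed, i.e. if and only if there is some $x\neq 0$ with $x\in C(W)$ and $-x\in C(W)$. So the lemma amounts to the equivalence: $C$ is almost Archimedean $\iff$ $\overline{C\cap W}$ is pointed for every finite-dimensional subspace $W$.

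The easy direction is ``all $C(W)$ are cones $\Rightarrow$ $C$ almost Archimedean''. Arguing contrapositively, suppose $C$ is not almost Archimedean, so there exist $x\neq 0$ and $y\in V$ with $\epsilon y+x\in C$ and $\epsilon y-x\in C$ for all $\epsilon>0$. Taking $W=\mathrm{span}\{x,y\}$, both $\epsilon y\pm x$ lie in $C\cap W$, and letting $\epsilon\downarrow 0$ yields $x\in\overline{C\cap W}=C(W)$ and, from $\epsilon y-x\to -x$, also $-x\in C(W)$. Thus $C(W)$ is not pointed, as required.

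The substantive direction is the converse, and here the main obstacle is that the offending vector $x$ lies only in the closure $\overline{C\cap W}$ and need not belong to $C$ itself, so it cannot directly be used in the order relations defining the almost Archimedean property. I would resolve this using relative interiors. Suppose some $C(W)$ is not pointed, with $x\neq 0$ and $\pm x\in C(W)$; then $K=C\cap W$ is a nonzero convex set, so its relative interior is nonempty and I fix $p\in\mathrm{ri}(K)$. By the standard segment principle for convex sets in finite dimensions, since $p\in\mathrm{ri}(K)=\mathrm{ri}(\overline{K})$ and $\pm x\in\overline{K}$, the half-open segments $(1-t)p\pm tx$ with $0\le t<1$ remain in $\mathrm{ri}(K)\subseteq C$. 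Dividing the inclusion $(1-t)p\pm tx\in C$ by $t>0$ and using that $C$ is a cone gives $sp+x\in C$ and $sp-x\in C$ for every $s>0$, where $s=(1-t)/t$ runs over $(0,\infty)$. In other words $-sp\le_C x\le_C sp$ for all $s>0$ while $x\neq 0$, so $C$ is not almost Archimedean. Combining the two contrapositives proves the lemma. The only nontrivial input is the relative-interior machinery, which becomes available precisely because we have passed to the finite-dimensional subspace $W$; I expect this passage, rather than any individual computation, to be the crux of the argument.
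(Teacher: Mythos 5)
Your proof is correct, and it takes a genuinely different route from the paper's. The paper leans on the previously established two\--dimensional criterion (\cite[Proposition A.2.2]{LNBook}, which states that $C$ is almost Archimedean if and only if $C(W)$ is a cone for every \emph{two}\--dimensional subspace $W$): that citation disposes of the ``if'' direction immediately, and for the ``only if'' direction the paper takes a putative $x\neq 0$ with $\pm x\in C(W)$ and runs an approximation argument inside $\mathrm{span}\{x,y\}$ (for a suitable interior point $y$) to force $\pm x\in C(x,y)$, contradicting the two\--dimensional result. You instead bypass the two\--dimensional reduction entirely: your preliminary observation that $C(W)$ can only fail to be a cone by failing pointedness is the same normalization the paper implicitly uses, your ``easy'' contrapositive matches the standard argument, and your ``substantive'' direction replaces the paper's approximation step by the accessibility lemma ($p\in\mathrm{ri}(K)$, $q\in\overline{K}$ implies the half\--open segment lies in $\mathrm{ri}(K)$) together with the rescaling $s=(1-t)/t$ to manufacture the order bounds $-sp\le_C x\le_C sp$ directly. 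The trade\--off: the paper's proof is shorter on the page because it outsources the real work to the cited proposition, while yours is self\--contained and in fact re\--proves that proposition as the special case $\dim W=2$; both arguments use finite\--dimensionality in an essential way (yours through the relative\--interior machinery, the paper's through the nonemptiness of $C(W)^\circ$ and the cited result). Your identification of the segment principle as the crux is accurate.
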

\begin{proof}
From \cite[Proposition A.2.2]{LNBook} we know that $C$ is almost Archimedean if and only if for each 2-dimensional subspace $W$ of $V$ we have that $C(W)$ is a cone. Thus, it remains to show that the condition is necessary. So, let $C$ be an almost Archimedean cone and let $W$ be a finite-dimensional subspace of $V$. We need to show that $C(W)$ is a cone.  It is clear that $C(W)$ is convex and $\lambda C(W)\subseteq C(W)$ for all $\lambda\geq 0$. Suppose that there exists $x\neq 0$ such that $x$ and $-x$ in $C(W)$. Note that we can replace $W$ by $C(W)-C(W)$ and assume that the span of $C(W)$ is $W$. As $W$ is finite-dimensional, this implies that 
$C(W)^\circ$ is nonempty. 

Select $y\in C(W)^\circ$ and $\delta>0$ such that $B_\delta(y)\subseteq C(W)^\circ$, where $B_\delta(w)$ denotes the $\delta$-ball around  $w$ in $W$. Let $\epsilon>0$. There exists $z\in B_\delta(0)\cap C(W)$ such that $x+\epsilon z\in C(W)$. Using the convexity of $C(W)$ we see that 
$\frac{1}{1+\epsilon} x +(1-\frac{1}{1+\epsilon})y = \frac{1}{1+\epsilon}(x+\epsilon z) + 
(1+\frac{1}{1+\epsilon})(y-z)\in C(W)$. These points lie in $\mathrm{span}\{x,y\}$ and converge to $x$ as $\epsilon \to 0$. In the same way we can find points in $\mathrm{span}\{x,y\}$ converging to $-x$. This implies that $x$ and $-x$ are in $C(x,y)$, which is impossible by \cite[Proposition A.2.2]{LNBook}.
\end{proof}

A useful variant of Thompson's metric, which will also play a role here,  is {\em Hilbert's (projective) metric},
\[
\delta_C(x,y) =\log \Big{(}M(x/y)M(y/x)\Big{)}
\]
for $x\sim_C y$ in $C$, and $d_C(x,y) =\infty$ otherwise.  Hilbert's metric is only  a metric on the rays in each part of $C$, as $\delta_C(\lambda x,\mu y)=\delta_C(x,y)$ for all $\lambda,\mu>0$ and $x\sim_Cy$  in $C$.  

Given a cone $C$ in $V$ we denote  the {\em dual cone} by $C^*=\{\phi\in V^*\colon \phi(x)\geq 0\mbox{ for all }x\in C\}$. A linear functional $\phi\in C^*$ is said to be {\em strictly positive} if $\phi(x)>0$ for all $x\in C\setminus\{0\}$.  
It is well know, see for example, \cite[Theorem 2.1.2]{LNBook}, that if $C$ is a closed cone  with nonempty interior in a finite-dimensional vector space $V$, then $C^*$ is also a closed cone with nonempty interior. Moreover, for each strictly positive $\varphi\in C^*$  the set  $\Sigma^\circ_\varphi=\{x\in C^\circ\colon \varphi(x)=1\}$ is a  bounded convex set on which $\delta_C$ coincides with {\em Hilbert's cross-ratio metric}, 
\[
\kappa(x,y) =\log \Big{(}\frac{\|x'-y\|}{\|x'-x\|}\frac{\|y'-x\|}{\|y'-y\|}\Big{)},
\]
where $x'$ and $y'$ are the points of intersection of the straight line through $x$ and $y$ and $\partial \Sigma_\varphi^\circ$ such that $x$ is between $x'$ and $y$, and $y$ is between $y'$ and $x$.

We will be interested in the geodesics in $(C,d_C)$. Recall that a map $\gamma$ from an (open, closed, bounded, or, unbounded) interval $I\subseteq \mathbb{R}$ into a metric space $(X,d_X)$ is called a {\em geodesic path} if 
\[
d_X(\gamma(s),\gamma(t)) = |s-t|\mbox{\quad for all }s,t\in I.
\]
The image of $\gamma$ is called a {\em geodesic segment} in $(X,d_X)$. It said to be a {\em geodesic line} in $(X,d_X)$ if $I=\mathbb{R}$.  

It is known, see for example  \cite[Theorem 2.6.9]{LNBook},  that if $P$ is a part of $C$, then  $(P,d_C)$ is a {\em geodesic} metric space, i.e.,  for each $x, y\in P$ there exists a geodesic path $\gamma\colon [a,b]\to P$ with $\gamma(a) = x$ and $\gamma(b) =y$. 
In general there can be more than one geodesic segment connecting $x$ and $y$ in $(P,d_C)$. One of the main objectives is to characterize those $x$ and $y$ in $(C,d_C)$ that are connected by a {\bf unique} geodesic segment. 
The following elementary result will be useful. We leave the proof to the reader. 
\begin{lemma}\label{lem:2.2}
If $x$ and $y$ are distinct points in a geodesic metric space $(X,d_X)$ and $\gamma\colon[a,b]\to X$ is a geodesic path with $\gamma(a)=x$ and $\gamma(b)=y$, then the image of $\gamma$ is a unique geodesic segment connecting $x$ and $y$ if and only if for each $z\in X$ with $d_X(x,y)=d_X(x,z)+d_X(z,y)$, we have that $z=\gamma(t)$ for some $t\in I$.     
\end{lemma}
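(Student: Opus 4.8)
The plan is to first reformulate the statement in terms of the ``metric interval'' $[x,y]:=\{z\in X\colon d_X(x,z)+d_X(z,y)=d_X(x,y)\}$. After an affine reparametrisation I may assume $\gamma\colon[0,L]\to X$ with $\gamma(0)=x$, $\gamma(L)=y$ and $L=d_X(x,y)$; since $\gamma$ is a geodesic path, $d_X(x,\gamma(t))=t$ and $d_X(\gamma(t),y)=L-t$, so every point $\gamma(t)$ lies in $[x,y]$. Hence the image of $\gamma$ is automatically contained in $[x,y]$, and the lemma amounts to the equivalence: the image of $\gamma$ is the unique geodesic segment from $x$ to $y$ if and only if $[x,y]$ is contained in (and therefore equal to) the image of $\gamma$.

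For the forward implication I would fix $z\in[x,y]$ and build a geodesic from $x$ to $y$ through $z$. Since $(X,d_X)$ is geodesic, there are geodesic paths $\alpha\colon[0,a]\to X$ from $x$ to $z$ and $\beta\colon[a,L]\to X$ from $z$ to $y$, where $a=d_X(x,z)$ and $L-a=d_X(z,y)$ (using $z\in[x,y]$). Concatenating them produces a path $\sigma$ with $\sigma(a)=z$, and the key step is to verify that $\sigma$ is again a geodesic. The only nontrivial case is $s\le a\le t$: the triangle inequality gives $d_X(\sigma(s),\sigma(t))\le(a-s)+(t-a)=t-s$, while the reverse bound follows by ``unfolding'' $d_X(x,y)\le d_X(x,\sigma(s))+d_X(\sigma(s),\sigma(t))+d_X(\sigma(t),y)$ together with $d_X(x,\sigma(s))=s$, $d_X(\sigma(t),y)=L-t$, and $d_X(x,y)=L$. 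Once $\sigma$ is a geodesic path from $x$ to $y$, uniqueness forces its image to coincide with that of $\gamma$; as $z=\sigma(a)$ lies on $\sigma$, it lies on $\gamma$, which is exactly what is required.

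For the reverse implication I would assume $[x,y]$ is contained in the image of $\gamma$ and take an arbitrary geodesic path $\sigma\colon[0,L]\to X$ from $x$ to $y$. Each $\sigma(s)$ satisfies $d_X(x,\sigma(s))+d_X(\sigma(s),y)=s+(L-s)=L$, so $\sigma(s)\in[x,y]$ and therefore $\sigma(s)=\gamma(t)$ for some $t$; comparing $d_X(x,\gamma(t))=t$ with $d_X(x,\sigma(s))=s$ pins down $t=s$, so $\sigma=\gamma$ and in particular the images agree, giving uniqueness. The one genuinely delicate point is the concatenation argument in the forward direction — checking that glueing two geodesics along the additivity relation $d_X(x,z)+d_X(z,y)=d_X(x,y)$ yields a geodesic — whereas the reverse direction collapses to the clean parameter-matching observation above.
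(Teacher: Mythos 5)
Your proof is correct and complete: the reduction to the metric interval $[x,y]=\{z\colon d_X(x,z)+d_X(z,y)=d_X(x,y)\}$, the concatenation of geodesics from $x$ to $z$ and from $z$ to $y$ with the ``unfolding'' inequality handling the case $s\le a\le t$, and the parameter-matching argument for the converse are all sound. The paper explicitly leaves the proof of this lemma to the reader, so there is no argument to compare against; yours is the standard one and correctly fills that gap.
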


\section{Two dimensional cones} 
The following elementary lemma is useful.
\begin{lemma}\label{lem:3.00}
Let $C$ be an almost Archimedean cone. If $x\sim_C y$ in $C$, then $x,y\in C(x,y)^\circ$ and $d_C(w,z) = d_{C(x,y)}(w,z)$ for  all $w,z\in C(x,y)^\circ$. 
\end{lemma}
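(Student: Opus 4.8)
The plan is to set $W=V(x,y)$ and $K=C(x,y)=\overline{C\cap W}$, and to push everything into the (at most two-dimensional) closed cone $K$ inside $W$. By Lemma \ref{lem:aa} the almost Archimedean hypothesis guarantees that $K$ is a cone, so $K$ is pointed and closed. The one genuinely delicate point throughout is that $C$ itself need not be closed, so $K=\overline{C\cap W}$ may be strictly larger than $C\cap W$; the whole argument hinges on controlling the passage between $C\cap W$, its closure $K$, and the interior $K^\circ$. I would record at the outset the standard convexity fact that for the convex set $C\cap W$ one has $\mathrm{relint}(K)=\mathrm{relint}(C\cap W)\subseteq C\cap W\subseteq C$. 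Since $x,y\in C\cap W$ span $W$ by definition, $C\cap W$ is full-dimensional in $W$, whence $\mathrm{relint}(K)=K^\circ$ and therefore $K^\circ\subseteq C$. This inclusion is the crux on which both assertions rest.

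To prove $x,y\in K^\circ$ I would split according to $\dim W$. If $x$ and $y$ are proportional then $W$ is a line, $K$ is the ray $\mathbb{R}_{\geq 0}x$, and $K^\circ$ is the open ray, which contains both $x$ and $y$ (the proportionality constant is positive because $C$ is pointed). If $x$ and $y$ are linearly independent, I would pick witnesses $0<\alpha\leq\beta$ of $x\sim_C y$, so that $u:=x-\alpha y$ and $v:=\beta y-x$ both lie in $C\cap W\subseteq K$. Linear independence forces $\alpha\leq m(x/y)<M(x/y)\leq\beta$, so $\alpha<\beta$ and hence $u,v$ are linearly independent; moreover $x=u+\alpha y$ and $y=(\beta-\alpha)^{-1}(u+v)$ exhibit $x$ and $y$ as strictly positive combinations of linearly independent elements of $K$. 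As the strictly positive combinations of two independent elements of a convex cone form a relatively open subset of their span, this places $x,y\in K^\circ$.

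For the equality of metrics it suffices, by the definition of $d_C$, to show $M(w/z;C)=M(w/z;K)$ for all $w,z\in K^\circ$, the statement for $M(z/w;\cdot)$ being symmetric. The inequality $M(w/z;K)\leq M(w/z;C)$ is immediate, since $w\leq_C\beta z$ gives $\beta z-w\in C\cap W\subseteq K$. For the reverse I would take any $\beta>M(w/z;K)$ and write $\beta z-w=\big(M(w/z;K)\,z-w\big)+\big(\beta-M(w/z;K)\big)z$; the first summand lies in $K$ because the infimum defining $M(\cdot\,;K)$ is attained in the closed cone $K$, while the second is a positive multiple of the interior point $z$, so their sum lies in $K^\circ\subseteq C$. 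Hence $w\leq_C\beta z$ for every such $\beta$, giving $M(w/z;C)\leq M(w/z;K)$ and therefore equality. Combining the two directions yields $d_C(w,z)=d_{C(x,y)}(w,z)$ and, incidentally, shows $w\sim_C z$ so that both quantities are finite. I expect the main obstacle to be precisely the non-closedness of $C$: verifying $K^\circ\subseteq C$ and that the order relations of $C$ and $K$ coincide on $K^\circ$ is where the relative-interior bookkeeping must be handled with care.
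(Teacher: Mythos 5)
Your proof is correct, and it reaches both conclusions by a genuinely more self-contained route than the paper's. The paper likewise reduces everything to the two-dimensional cone $C(x,y)$, but it then imports the structure theory of closed two-dimensional cones: it writes $C(x,y)=\{su+tv\colon s,t\geq 0\}$ via \cite[Theorem A.5.1]{LNBook}, enumerates the four parts to conclude $x,y\in C(x,y)^\circ$, and quotes \cite[Corollary A.5.2]{LNBook} outright for the identity $M(w/z;C)=M(w/z;C\cap V(x,y))=M(w/z;C(x,y))$. You instead isolate the single inclusion $C(x,y)^\circ=\mathrm{relint}(C\cap V(x,y))\subseteq C$, obtained from the standard identity $\mathrm{relint}(\overline{S})=\mathrm{relint}(S)$ for a finite-dimensional convex set $S$, and derive both assertions from it: interiority of $x$ and $y$ via the explicit elements $u=x-\alpha y$ and $v=\beta y-x$ of $C\cap V(x,y)$, and equality of the $M$-functionals via the decomposition $\beta z-w=\bigl(M(w/z;K)z-w\bigr)+\bigl(\beta-M(w/z;K)\bigr)z$ together with $K+K^\circ\subseteq K^\circ$. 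What your route buys is an argument that makes visible exactly where the non-closedness of $C$ is absorbed, rather than delegating that to the cited appendix results; what the paper's route buys is brevity and the explicit coordinates $u,v$ for $C(x,y)$, which it reuses in the later lemmas of Section 3. Two small points to tidy: dispatch the degenerate case $x=y=0$ separately (there $V(x,y)=\{0\}$ and there is no ``line $W$''), as the paper does; and note that the strict inequality $m(x/y)<M(x/y)$ you invoke rests on the almost Archimedean hypothesis --- alternatively, $\alpha=\beta$ would give $x-\alpha y\in C\cap(-C)=\{0\}$, contradicting independence directly.
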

\begin{proof}
The statements are trivial for $x=y=0$. If $x=\mu y$ for some $\mu>0$ and $x\neq 0$, then $C(x,y) =\{\lambda x\colon\lambda \geq 0\}$ and hence $x,y\in C(x,y)^\circ$. 
Obviously, for $w=\alpha x$ and $z=\beta x$ with $0<\alpha\leq \beta$ we have 
$d_C(w,z)=\log \beta/\alpha =d_{C(x,y)}(w,z)$. 

If $x\sim_C y$ are linearly independent, then $C(x,y)$ is a 2-dimensional closed cone in $V(x,y)$. By \cite[Theorem A.5.1]{LNBook} we know that there exists linearly independent vector $u$ and $v$ in $V(x,y)$ such that 
\[
C(x,y) =\{su+tv\colon s,t\geq 0\}.
\] 
It follows that $C(x,y)$ has 4 parts: $\{0\}$, $\{su\colon s>0\}$, $\{tu\colon t>0\}$, and  
$C(x,y)^\circ$. As $x$ and $y$ are linearly independent, $x$ and $y$ must be in 
$C(x,y)^\circ$. Moreover, it follows from \cite[Corollary A.5.2]{LNBook} that 
\[
M(w/z;C)=M(w/z;C\cap V(x,y)) = M(w/z;C(x,y))
\]
for all $w,z\in C(x,y)^\circ$, which proves the final assertion. 
\end{proof} 

Lemma \ref{lem:3.00} has the following basic consequence.  
\begin{corollary}\label{lem:3.0} 
If $x\sim_C y$  are connected by a unique geodesic segment $\gamma$ in $(C,d_C)$, then $\gamma$ lies in $C(x,y)^\circ$ and $\gamma$ is a unique geodesic segment  connecting $x$ and $y$ in $(C(x,y)^\circ,d_{C(x,y)})$. 
\end{corollary}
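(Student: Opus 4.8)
The plan is to move geodesics back and forth between $(C,d_C)$ and the smaller space $(C(x,y)^\circ,d_{C(x,y)})$ using Lemma \ref{lem:3.00}, which gives $x,y\in C(x,y)^\circ$ and shows that the two metrics agree on $C(x,y)^\circ$. First I would note that $C(x,y)^\circ$ is a part of the closed cone $C(x,y)$ (as is apparent from the proof of Lemma \ref{lem:3.00}), so by \cite[Theorem 2.6.9]{LNBook} the space $(C(x,y)^\circ,d_{C(x,y)})$ is a geodesic metric space. Hence there is a geodesic path $\eta\colon[0,d_C(x,y)]\to C(x,y)^\circ$ with $\eta(0)=x$ and $\eta(d_C(x,y))=y$.

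Since the image of $\eta$ lies in $C(x,y)^\circ\subseteq C$ and $d_{C(x,y)}$ coincides with $d_C$ there, $\eta$ is also a geodesic path connecting $x$ and $y$ in $(C,d_C)$. By hypothesis $x$ and $y$ are joined by a unique geodesic segment $\gamma$ in $(C,d_C)$, so the image of $\eta$ must equal $\gamma$. In particular $\gamma$ is contained in $C(x,y)^\circ$, which is the first assertion.

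For the second assertion I would argue that uniqueness passes to the smaller space almost for free. Applying Lemma \ref{lem:3.00} once more, the inclusion $\gamma\subseteq C(x,y)^\circ$ together with the agreement of the metrics shows that $\gamma$ is itself a geodesic segment connecting $x$ and $y$ in $(C(x,y)^\circ,d_{C(x,y)})$. If $\gamma'$ is any geodesic segment connecting $x$ and $y$ in $(C(x,y)^\circ,d_{C(x,y)})$, then the same reasoning makes $\gamma'$ a geodesic segment connecting $x$ and $y$ in $(C,d_C)$, whence $\gamma'=\gamma$ by uniqueness in the ambient space.

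The argument is a routine two-way transfer, so the only point deserving care is the direction that confines the ambient $C$-geodesic to $C(x,y)^\circ$. This is precisely where the existence of \emph{some} geodesic in the part $C(x,y)^\circ$, supplied by \cite[Theorem 2.6.9]{LNBook}, must be combined with the uniqueness hypothesis in $(C,d_C)$; once this inclusion is in place no further computation is required, since every geodesic in the smaller space is automatically a geodesic in $(C,d_C)$. Alternatively, one could phrase both directions through the betweenness criterion of Lemma \ref{lem:2.2}, but the direct comparison above seems cleaner.
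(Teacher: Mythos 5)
Your proof is correct and is exactly the argument the paper leaves implicit when it calls the corollary a ``basic consequence'' of Lemma \ref{lem:3.00}: produce some geodesic inside the part $C(x,y)^\circ$ via \cite[Theorem 2.6.9]{LNBook}, use the agreement of $d_C$ and $d_{C(x,y)}$ on $C(x,y)^\circ$ to view it as a geodesic in $(C,d_C)$, and let uniqueness in the ambient space force $\gamma$ into $C(x,y)^\circ$ and carry the uniqueness down. No gaps; the two-way transfer is exactly what is needed.
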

Thus, we need to first analyze the problem in two dimensions. If $K$ is a closed cone with nonempty interior in a 2-dimensional vector space $W$, then there exists $u,v\in\partial K$ linearly independent such that 
\[
K=\{\alpha u +\beta v\colon \alpha,\beta\geq 0\},
\] 
see \cite[Theorem A.5.1]{LNBook}. Alternatively, there exists linearly independent functionals $\psi_1$ and $\psi_2$ on $W$ such that 
\[
K=\{x\in W\colon \psi_1(x)\geq 0\mbox{ and } \psi_2(x)\geq  0\}.
\]
\begin{lemma}\label{lem:3.1} Let $K\subseteq W$ be a closed  cone with nonempty interior in a 2-dimensional normed space $W$. If $x,y\in K^\circ$, then there exists a unique geodesic segment connecting $x$ and $y$ in $(K^\circ,d_K)$ if and only if either 
\begin{enumerate}[(i)]
\item $M(x/y)=M(y/x)$, or, 
\item $M(x/y)=M(y/x)^{-1}$, in which case $x=\lambda y$ for some $\lambda>0$.
\end{enumerate}
In particular,  through each $x\in K^\circ$ there are precisely two unique geodesics. 
\end{lemma}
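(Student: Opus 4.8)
The plan is to reduce the whole statement to the model space $(\mathbb{R}^2,\|\cdot\|_\infty)$ and then compute metric intervals there. Since $K$ is a closed cone with nonempty interior in the $2$-dimensional space $W$, it is simplicial, say $K=\{\alpha u+\beta v\colon \alpha,\beta\ge 0\}$ with $u,v$ linearly independent. Writing each $x\in K^\circ$ in coordinates $x=x_1u+x_2v$ with $x_1,x_2>0$, the inclusion $x\le_K\beta y$ is equivalent to $x_1\le\beta y_1$ and $x_2\le\beta y_2$, so $M(x/y)=\max\{x_1/y_1,x_2/y_2\}$ and similarly for $M(y/x)$. Hence $\Phi\colon K^\circ\to\mathbb{R}^2$, $\Phi(x)=(\log x_1,\log x_2)$, is an isometry of $(K^\circ,d_K)$ onto $(\mathbb{R}^2,\|\cdot\|_\infty)$; this is just the two-dimensional instance of the simplicial-cone isometry recalled in the introduction. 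It therefore suffices to characterize the pairs $p,q\in\mathbb{R}^2$ joined by a unique geodesic segment in $(\mathbb{R}^2,\|\cdot\|_\infty)$, and to translate conditions (i) and (ii) across $\Phi$.

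For the translation I would put $p=\Phi(x)$, $q=\Phi(y)$ and $a=p-q$. Then $\log M(x/y)=\max\{a_1,a_2\}$ and $\log M(y/x)=-\min\{a_1,a_2\}$, so condition (i) becomes $a_1+a_2=0$ and condition (ii) becomes $a_1=a_2$. Moreover $a_1=a_2$ says exactly that $x_1/y_1=x_2/y_2$, i.e.\ $x=\lambda y$, which accounts for the parenthetical remark in (ii). Thus conditions (i) and (ii) together assert precisely that $a$ is parallel to $(1,-1)$ or to $(1,1)$, equivalently that $|a_1|=|a_2|$.

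The heart of the argument is the description of the metric interval $[p,q]=\{z\in\mathbb{R}^2\colon \|p-z\|_\infty+\|z-q\|_\infty=\|p-q\|_\infty\}$, after which Lemma \ref{lem:2.2} applies directly: a straight segment is always a geodesic path for $\|\cdot\|_\infty$, and $(K^\circ,d_K)$ is a geodesic space, so every $z\in[p,q]$ lies on \emph{some} geodesic segment from $p$ to $q$. The key elementary observation is that if $i^*$ is a coordinate with $|a_{i^*}|=\|a\|_\infty$, then any $z\in[p,q]$ must satisfy $\|p-z\|_\infty=|p_{i^*}-z_{i^*}|$ and $\|z-q\|_\infty=|z_{i^*}-q_{i^*}|$ with $z_{i^*}$ between $p_{i^*}$ and $q_{i^*}$, since $|p_{i^*}-z_{i^*}|+|z_{i^*}-q_{i^*}|\ge|p_{i^*}-q_{i^*}|=\|p-q\|_\infty$ forces equality throughout. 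When $|a_1|\ne|a_2|$ only one coordinate is critical; the other coordinate of $z$ is then free in an open interval, so $[p,q]$ is two-dimensional and Lemma \ref{lem:2.2} yields a second geodesic segment, hence non-uniqueness. When $|a_1|=|a_2|$ both coordinates are critical, and the resulting equalities $|p_1-z_1|=|p_2-z_2|$ and $|z_1-q_1|=|z_2-q_2|$, together with the betweenness, collapse to a single affine relation pinning $z$ to the straight segment $[p,q]$; hence $[p,q]$ is that segment and the geodesic is unique. Using that coordinate reflections and the coordinate swap are isometries of $\|\cdot\|_\infty$, it is enough to check the two critical sub-cases $a=(d,d)$ and $a=(d,-d)$.

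Combining these three steps gives the stated equivalence. For the final assertion, the unique geodesic lines through $x$ correspond under $\Phi$ to the unique geodesic lines through $p=\Phi(x)$ in $(\mathbb{R}^2,\|\cdot\|_\infty)$, which by the dichotomy above are exactly the two diagonals $p+\mathbb{R}(1,1)$ and $p+\mathbb{R}(1,-1)$; pulling back gives precisely two unique geodesics through $x$. I expect the main obstacle to be the metric-interval computation itself — specifically, establishing the coordinate-wise criticality observation cleanly and verifying that the two diagonal sub-cases genuinely collapse $[p,q]$ to a segment, since this is where all the geometric content lives. The passage through $\Phi$ and the appeal to Lemma \ref{lem:2.2} are essentially bookkeeping.
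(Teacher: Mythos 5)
Your proposal is correct and follows essentially the same route as the paper: both reduce to $(\mathbb{R}^2,\|\cdot\|_\infty)$ via a coordinatewise logarithm isometry and then translate conditions (i) and (ii) into the statement that $\Phi(x)-\Phi(y)$ is parallel to $(1,-1)$ or $(1,1)$. The only difference is that the paper simply asserts that the two diagonals are the only unique geodesic lines through a point of $(\mathbb{R}^2,\|\cdot\|_\infty)$, whereas you supply the metric-interval computation proving it; that computation is correct.
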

\begin{proof}
Define a map $\Psi\colon K^\circ\to\mathbb{R}^2$ by $\Psi(x)=(\log\psi_1(x),\log\psi_2(x))$. Since $x\leq y$ if and only if $\psi_i(x)\le\psi_i(y)$ for $i=1,2$, it follows that 
$$M(x/y)=\max_{i=1,2}\frac{\psi_i(x)}{\psi_i(y)}$$ on $K^\circ$. So, for $x,y\in K^\circ$ the equalities
\begin{align*}
d_K(x,y)=\max_{i=1,2}\left|\log\frac{\psi_i(x)}{\psi_i(y)}\right| =\|\Psi(x)-\Psi(y)\|_\infty,
\end{align*}
hold, where $\|z\|_\infty=\max_i|z_i|$ is the sup-norm. This implies that 
$\Psi$ is an isometry from $(K^\circ,d_K)$ onto $(\mathbb{R}^2,\|\cdot\|_\infty)$. 
In $(\mathbb{R}^2,\|\cdot\|_\infty)$ there are precisely two unique geodesic lines through each point $z$, namely
\[
\ell_I =\{z+t(1,-1)\colon t\in\mathbb{R}\}\mbox{\quad and \quad} 
\ell_{II} =\{z+t(1,1)\colon t\in\mathbb{R}\},
\]
which proves the last assertion of the lemma. 
 
It follows that there exists a unique geodesic segment connecting $x$ and $y$ in $(K^\circ, d_K)$ if and only if either 
\[
\Psi(x)- \Psi(y) =s(1,-1)\mbox{\quad or \quad } \Psi(x) -\Psi(y) =s(1,1)
\]
for some $s\in\mathbb{R}$. The first equality is equivalent to 
\[
\log \frac{\psi_1(x)}{\psi_1(y)} = s= \log \frac{\psi_2(y)}{\psi_2(x)},
\]
which holds if and only if $M(x/y)=M(y/x)$. 
The second equality is equivalent to 
\[
\log \frac{\psi_1(x)}{\psi_1(y)} = s= \log \frac{\psi_2(x)}{\psi_2(y)},
\]
which holds if and only if $M(x/y)=M(y/x)^{-1}$. Finally note that as $K$ is closed, 
$M(y/x)^{-1} y\leq_K x\leq_K M(x/y)y$. So, if $M(x/y)=M(y/x)^{-1}$, then $x=M(x/y)y$. 
 \end{proof}
As an immediate consequence we obtain the following result. 
\begin{corollary}\label{cor:3.2} 
Suppose that $C$ is an almost Archimedean cone in a vector space $V$. If $x\sim_C y$ are linearly independent elements of $C$ and the exists a unique geodesic segment connecting $x$ and $y$ in $(C, d_C)$, then $M(x/y)=M(y/x)$. 
\end{corollary}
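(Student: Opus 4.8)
The plan is to reduce to the two-dimensional situation and then invoke Lemma~\ref{lem:3.1}. First I would apply Corollary~\ref{lem:3.0}: since $x\sim_C y$ are joined by a unique geodesic segment in $(C,d_C)$, this segment lies in $C(x,y)^\circ$ and is the unique geodesic segment connecting $x$ and $y$ in $(C(x,y)^\circ, d_{C(x,y)})$. This is the step that transfers the hypothesis from the (possibly infinite-dimensional) ambient cone to the two-dimensional slice where Lemma~\ref{lem:3.1} can be used.

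Next, because $x$ and $y$ are linearly independent, $V(x,y)=\mathrm{span}\{x,y\}$ is two-dimensional, and by Lemma~\ref{lem:3.00}, together with the structural discussion preceding Lemma~\ref{lem:3.1}, the set $C(x,y)$ is a closed cone with nonempty interior in $V(x,y)$. Equipping the finite-dimensional space $V(x,y)$ with any norm turns it into a two-dimensional normed space, so Lemma~\ref{lem:3.1} applies with $K=C(x,y)$ and $W=V(x,y)$. Uniqueness of the geodesic connecting $x$ and $y$ in $(C(x,y)^\circ, d_{C(x,y)})$ then forces either alternative (i), namely $M(x/y;C(x,y))=M(y/x;C(x,y))$, or alternative (ii), namely $M(x/y;C(x,y))=M(y/x;C(x,y))^{-1}$ with $x=\lambda y$ for some $\lambda>0$.

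The key observation is that alternative (ii) is incompatible with the hypothesis: $x=\lambda y$ would contradict the linear independence of $x$ and $y$. Hence alternative (i) must hold. To conclude, I would recall that the computation in the proof of Lemma~\ref{lem:3.00} gives $M(w/z;C)=M(w/z;C(x,y))$ for all $w,z\in C(x,y)^\circ$; applying this to the ordered pairs $(x,y)$ and $(y,x)$ yields $M(x/y;C)=M(x/y;C(x,y))$ and $M(y/x;C)=M(y/x;C(x,y))$. Combining these identities with (i) gives $M(x/y;C)=M(y/x;C)$, which is the assertion.

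I do not expect any genuinely hard step here; the corollary is essentially a bookkeeping consequence of the preceding results. The only two points requiring care are the elimination of alternative (ii) --- which is precisely where the linear-independence hypothesis is used, and which is what prevents the conclusion from collapsing into the degenerate collinear case --- and the verification that the quantities $M(\cdot/\cdot)$ computed in $C$ agree with those computed in the two-dimensional slice $C(x,y)$, a fact supplied by Lemma~\ref{lem:3.00}.
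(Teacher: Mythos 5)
Your proposal is correct and follows exactly the route the paper intends: the corollary is stated there as an immediate consequence of Corollary~\ref{lem:3.0} and Lemma~\ref{lem:3.1}, with alternative (ii) excluded by linear independence and the agreement of $M(\cdot/\cdot)$ between $C$ and $C(x,y)$ supplied by Lemma~\ref{lem:3.00}. Nothing is missing.
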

It will be convenient to make the following definition. 
\begin{definition}\label{def:type} Let $C$ be an almost Archimedean cone in a vector  space $V$. If $x\sim_C y$ are linearly independent elements in $C$ and $M(x/y)=M(y/x)$, then we call the unique geodesic segment (line) through $x$ and $y$ in $(C(x,y)^\circ, d_{C(x,y)})$ a {\em type I geodesic segment (line)} in $(C,d_C)$. For $x\in C\setminus\{0\}$ we call a segment of the ray, $\{tx\colon t>0\}$, through $x$  a {\em type II geodesic segment} in $(C,d_C)$.
\end{definition}
\begin{remark} Note that if $u$ and $v$ are points on a type I geodesic segment, then $M(u/v)=M(v/u)$.  
\end{remark}
\begin{figure}[h]
\begin{center}
\begin{picture}(200, 100)
\thicklines
   \curve(10,100, 100,50, 190,100)
 \put(100,0){\line(-1,2){50}}

\put(100,0){\line(1,1){100}}
\put(100,0){\line(-1,1){100}}

\put(95,90){$C^\circ$}

\put(70,75){$\mathrm{II}$}
\put(133,65){$\mathrm{I}$}

\put(72,60){$x$}
\put(73,55){\circle*{3.0}}

\put(118,60){$y$}
\put(125,54){\circle*{3.0}}

   \end{picture}
   \caption{Type I and type II geodesic segments}
   \label{fig:3.1}
  \end{center}
  \end{figure}
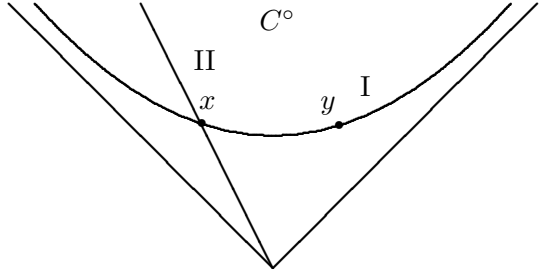

\begin{lemma}\label{lem:3.2} 
Let $K=\{\alpha u +\beta v\in W\colon \alpha,\beta\geq 0\}$  be a closed cone with nonempty interior  in a 2-dimensional vector space $W$. Every type I geodesic line in $(K^\circ,d_K)$ is of the form   
\[
\{\alpha(e^t u+e^{-t}v)\colon t\in\mathbb{R}\}
\]
for some $\alpha>0$. Moreover, for each $\alpha>0$, the map $\gamma\colon t\mapsto \alpha (e^tu+e^{-t}v)$, $t\in\mathbb{R}$, is a geodesic path and its image is a type I geodesic line  in $(K^\circ,d_K)$. 
\end{lemma}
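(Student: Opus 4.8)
The plan is to reduce everything to the isometry $\Psi$ constructed in the proof of Lemma \ref{lem:3.1}. Since $u$ and $v$ are linearly independent and generate $K$, they form a basis of $W$, so every $x\in K^\circ$ can be written uniquely as $x=\alpha u+\beta v$ with $\alpha,\beta>0$. Taking $\psi_1,\psi_2$ to be the coordinate functionals, $\psi_1(x)=\alpha$ and $\psi_2(x)=\beta$, we have $K=\{x\in W\colon \psi_1(x)\ge 0,\ \psi_2(x)\ge 0\}$ and, crucially, the reconstruction identity $x=\psi_1(x)u+\psi_2(x)v$. By Lemma \ref{lem:3.1} the map $\Psi(x)=(\log\psi_1(x),\log\psi_2(x))$ is an isometry from $(K^\circ,d_K)$ onto $(\mathbb{R}^2,\|\cdot\|_\infty)$, and from its proof a pair $x\sim_K y$ of linearly independent points satisfies $M(x/y)=M(y/x)$ exactly when $\Psi(x)-\Psi(y)$ points in the direction $(1,-1)$.

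For the first assertion I would argue that a type I geodesic line is, by Definition \ref{def:type}, the unique geodesic line through two linearly independent points $x,y$ with $M(x/y)=M(y/x)$; transporting it through the isometry $\Psi$, this line is the $\Psi$-preimage of a line $\ell_I=\{z+t(1,-1)\colon t\in\mathbb{R}\}$ in $(\mathbb{R}^2,\|\cdot\|_\infty)$, where $z=(a,b)=\Psi(x)$. Writing out $\Psi^{-1}$ by means of the reconstruction identity gives the point with coordinates $\psi_1=e^{a+t}$ and $\psi_2=e^{b-t}$, that is $e^{a}e^{t}u+e^{b}e^{-t}v$. The remaining step is purely cosmetic: setting $\alpha=e^{(a+b)/2}$ and shifting the parameter by $(a-b)/2$ rewrites this family in the stated normalized form $\{\alpha(e^{t}u+e^{-t}v)\colon t\in\mathbb{R}\}$.

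For the converse, fix $\alpha>0$ and $\gamma(t)=\alpha(e^{t}u+e^{-t}v)$. I would simply compute $\Psi(\gamma(t))=(\log\alpha+t,\ \log\alpha-t)$, so that $\Psi(\gamma(s))-\Psi(\gamma(t))=(s-t)(1,-1)$ and hence $d_K(\gamma(s),\gamma(t))=\|(s-t)(1,-1)\|_\infty=|s-t|$; this shows $\gamma$ is a geodesic path. Its image is a line of direction $(1,-1)$ pulled back through $\Psi$; picking any two distinct parameter values yields two points that are linearly independent (the $2\times 2$ determinant $e^{s-t}-e^{t-s}$ is nonzero for $s\ne t$) and satisfy $M(\gamma(s)/\gamma(t))=M(\gamma(t)/\gamma(s))$, so by Definition \ref{def:type} the image is precisely the type I geodesic line through them.

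The argument is essentially a change of variables, so there is no deep obstacle; the only point requiring care is to fix the normalization of $\psi_1,\psi_2$ as the coordinate functionals \emph{dual to} $u,v$ (rather than as arbitrary defining functionals of $K$), since the explicit form of the geodesic hinges on the reconstruction identity $x=\psi_1(x)u+\psi_2(x)v$, whereas the metric $d_K$ is insensitive to rescaling the $\psi_i$. Tracking the resulting multiplicative constant $\alpha$ and the parameter shift is what turns the abstract preimage into the clean expression $\alpha(e^{t}u+e^{-t}v)$.
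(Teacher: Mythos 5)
Your proof is correct and follows essentially the same route as the paper's: both rest entirely on the two-dimensional picture set up in Lemma \ref{lem:3.1} (the coordinates dual to $u,v$ and the logarithmic change of variables onto $(\mathbb{R}^2,\|\cdot\|_\infty)$). The only difference is mechanical: you transport everything through the explicit isometry $\Psi$ and pull back the line $\ell_I$, whereas the paper verifies directly that $M(\gamma(t)/\gamma(s))=M(\gamma(s)/\gamma(t))=e^{t-s}$ by exhibiting the boundary elements $e^{t-s}\gamma(s)-\gamma(t)$ and $\gamma(t)-e^{s-t}\gamma(s)$, and obtains the first assertion from the unique representation $x=\alpha(e^tu+e^{-t}v)$ together with the fact that there is exactly one type I line through each point.
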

\begin{proof}
Let $x\in K^\circ$. As $u$ and $v$ are linearly independent, there exist  unique $a,b>0$ such that $x=au+bv$. A simple linear algebra argument shows that $a=\alpha e^t$ and $b=\alpha e^{-t}$ has a unique solution  with $\alpha>0$ and $t\in\mathbb{R}$. Thus there exist unique  $\alpha>0$ and $t\in\mathbb{R}$ such that $x=\alpha(e^tu+e^{-t}v)$. 

As $K$ is 2-dimensional, there is exactly one type I geodesic line through $x$ in $(K^\circ,d_K)$. So, it suffices to show for $\alpha>0$ that the image of $\gamma\colon\mathbb{R}\to (K^\circ,d_K)$ given by, 
\[
\gamma(t)=\alpha(e^t u+e^{-t}v)\mbox{\quad for } t\in\mathbb{R},
\] 
is a type I geodesic line. Let $t>s$ and note that 
\[
e^{t-s}\gamma(s)-\gamma(t) =\alpha(e^{t-2s}-e^{-t})v\in\partial K,
\]
so that $M(\gamma(t)/\gamma(s))=e^{t-s}$. Likewise, 
\[
\gamma(t) - e^{s-t} \gamma(s) = \alpha(e^t-e^{2s-t})u\in\partial K
\]
implies that $M(\gamma(s)/\gamma(t)) =e^{t-s}$.  Thus, $d_K(\gamma(t),\gamma(s)) =t-s$ and 
\[
M(\gamma(t)/\gamma(s))=M(\gamma(s)/\gamma(t))\mbox{\quad  for all $t>s$}.
\]
This shows that $\gamma(\mathbb{R})$ is a unique type I geodesic line in $(K^\circ,d_K)$. 
\end{proof}

\section{A characterization of unique geodesics} 
In this section we prove a geometric characterization of the unique geodesic segments  in $(C,d_C)$.  As we shall see, it is quite easy to show that a type II geodesic segment is always a unique geodesics segment in the whole space $(C,d_C)$. In general, however, additional assumptions are needed for a type I geodesic to be unique in the whole space. 

\begin{proposition}\label{prop:4.1} 
Let $C$ be an almost Archimedean cone in a vector space $V$, If $x\in C\setminus\{0\}$ and $y=\lambda x$ for some $\lambda>1$, then the type II geodesic segment, $\{\lambda^tx\colon 0\leq t\leq 1\}$, connecting $x$ and $y$ is a unique geodesic segment in $(C^\circ,d_C)$.
\end{proposition}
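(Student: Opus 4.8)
The plan is to apply Lemma \ref{lem:2.2}: since $\{\lambda^t x\colon 0\leq t\leq 1\}$ is a type II geodesic segment joining $x$ and $y$, it suffices to show that every $z$ at finite Thompson distance with $d_C(x,y)=d_C(x,z)+d_C(z,y)$ lies on this ray segment. First I would record the relevant distances. Because $y=\lambda x$, one reads off directly from the definitions of $M(\cdot/\cdot)$ that $M(x/y)=1/\lambda$ and $M(y/x)=\lambda$, so $d_C(x,y)=\log\lambda$; more generally $d_C(\lambda^s x,\lambda^t x)=|t-s|\log\lambda$, which confirms that the image of $t\mapsto\lambda^t x$ is a geodesic segment between $x$ and $y$.

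Now fix a competing point $z$ with $d_C(x,z)+d_C(z,y)=\log\lambda$. Both summands are finite, so $z\sim_C x$, and $z$ lies in the part of $x$. If $z$ and $x$ are linearly dependent, then $z=\mu x$ for some $\mu>0$, and the scalar identity $|\log\mu|+|\log\lambda-\log\mu|=\log\lambda$ forces $1\leq\mu\leq\lambda$, i.e.\ $z=\lambda^t x$ with $t\in[0,1]$, as required.

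The substantive case is that $x$ and $z$ are linearly independent. Here I would pass to the two-dimensional closed cone $K=C(x,z)$. The key point is that $y=\lambda x$ is a positive multiple of $x$, so all three of $x,y,z$ lie in $V(x,z)$; by Lemma \ref{lem:3.00} they lie in $K^\circ$ and $d_C$ agrees with $d_K$ there, whence the additivity relation also holds for $d_K$. I would then invoke the isometry $\Psi\colon(K^\circ,d_K)\to(\mathbb{R}^2,\|\cdot\|_\infty)$ of Lemma \ref{lem:3.1}. Since $\psi_i(y)=\lambda\psi_i(x)$ for $i=1,2$, we get $\Psi(y)-\Psi(x)=(\log\lambda)(1,1)$, so the image points differ along the diagonal direction $(1,1)$. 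The problem thus reduces to an elementary fact about the sup-norm: if $Y-X=(L,L)$ with $L>0$, then the only $Z$ with $\|X-Z\|_\infty+\|Z-Y\|_\infty=\|X-Y\|_\infty=L$ are $Z=X+s(1,1)$ with $s\in[0,L]$. Normalizing $X=0$, one checks coordinatewise that $|z_i|+|z_i-L|\geq L$ with equality iff $z_i\in[0,L]$, which forces $z_1,z_2\in[0,L]$; then $\max(z_1,z_2)+\max(L-z_1,L-z_2)=L+|z_1-z_2|$, equal to $L$ only when $z_1=z_2$. Pulling this back through $\Psi$ yields $z=\lambda^t x$ for some $t\in[0,1]$.

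The only point requiring genuine care --- and the main obstacle --- is the reduction to two dimensions: one must be sure that the arbitrary competing point $z$, together with $x$ and $y$, sits inside a \emph{single} two-dimensional cone on which Thompson's metric is unaltered. This is exactly where the hypothesis $y=\lambda x$ is used, since it guarantees $y\in V(x,z)$ for every $z$, so that $K=C(x,z)$ captures all three points at once and Lemma \ref{lem:3.00} applies; the remaining sup-norm computation is then routine.
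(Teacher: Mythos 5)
Your proof is correct, but it takes a genuinely different route from the paper's. The paper argues directly and in one step: writing $d_C(x,z)=s\log\lambda$ and $d_C(z,y)=(1-s)\log\lambda$, it extracts the two order inequalities $z\leq_C(\lambda^s+\epsilon)x$ and $\lambda x\leq_C(\lambda^{1-s}+\epsilon)z$, combines them into the squeeze $\frac{\lambda}{\lambda^{1-s}+\epsilon}\,x\leq_C z\leq_C(\lambda^s+\epsilon)x$ for all $\epsilon>0$, and concludes $z=\lambda^sx$ from the almost Archimedean hypothesis --- no case distinction, no reduction to two dimensions, and no appeal to Lemma \ref{lem:2.2} beyond its trivial direction. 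You instead split off the linearly dependent case and, for independent $x,z$, pass to the two-dimensional cone $C(x,z)$ via Lemma \ref{lem:3.00} (your observation that $y=\lambda x$ automatically lies in $V(x,z)$ is exactly the right point to flag) and then use the sup-norm isometry $\Psi$ of Lemma \ref{lem:3.1} to reduce to the elementary fact that the diagonal direction $(1,1)$ is a unique geodesic direction in $(\mathbb{R}^2,\|\cdot\|_\infty)$; your coordinatewise verification of that fact is sound. What your approach buys is conceptual unity with Section 3: type II geodesics are seen to be the $(1,1)$-diagonals in the sup-norm model, mirroring how type I geodesics are the $(1,-1)$-diagonals. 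What the paper's approach buys is brevity and transparency about hypotheses: the almost Archimedean property is used explicitly in the final step, whereas in your argument it is hidden inside the fact (Lemma \ref{lem:aa} and Lemma \ref{lem:3.00}) that $C(x,z)$ is a genuine closed cone, and you additionally rely on the classification of parts of a two-dimensional cone. Both proofs are complete.
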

\begin{proof}
Suppose that $z\in C$ is such that 
\[ 
d_C(x,z)=sd_C(x,y)\mbox{\quad and \quad } d_C(z,y)=(1-s)d_C(x,y).
\]
As $\lambda >1$, $d_C(x,y) =\log M(y/x) =\log\lambda$. Thus, 
$M(z/x)\leq \lambda^s$ and $M(y/z)\leq\lambda^{(1-s)}$. 
It follows from the first inequality that $z\leq_C (\lambda^s+\epsilon) x$ for all $\epsilon >0$. The second inequality gives  $y\leq_C (\lambda^{1-s} +\epsilon )z$ for all $\epsilon >0$. As $y=\lambda x$ we find that 
\[
\frac{\lambda}{\lambda^{1-s} +\epsilon} x\leq_C z\leq_C (\lambda^s+\epsilon) x
\]
for all $\epsilon >0$. This implies that $z=\lambda^sx$, as $C$ is almost Archimedean. 
\end{proof}
Before we analyze the type I geodesic segments, we prove the following basic lemma. 
\begin{lemma}\label{lem:4.2} 
Let $C$ be an almost Archimedean cone in a vector space $V$. If $x\sim_C y$ are linearly independent elements in $C$ and $M(x/y)=M(y/x)$, then the straight line through $x$ and $y$ intersects $\partial C(x,y)$ in precisely two points. 
\end{lemma}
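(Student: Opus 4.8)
The plan is to reduce the statement to a concrete computation inside the two--dimensional cone $C(x,y)$ using $(u,v)$--coordinates. Since $x\sim_C y$ are linearly independent, Lemma~\ref{lem:3.00} guarantees that $C(x,y)$ is a $2$--dimensional closed cone with $x,y\in C(x,y)^\circ$, and by \cite[Theorem A.5.1]{LNBook} we may fix linearly independent $u,v$ with $C(x,y)=\{su+tv\colon s,t\ge 0\}$. In these coordinates the interior is $\{su+tv\colon s,t>0\}$ and the boundary splits as $\partial C(x,y)=R_u\cup R_v$, where $R_u=\{su\colon s\ge 0\}$ and $R_v=\{tv\colon t\ge 0\}$ are the two extreme rays. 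Writing $x=a_1u+b_1v$ and $y=a_2u+b_2v$, all four coefficients are strictly positive, because $x,y\in C(x,y)^\circ$.

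Next I would translate the hypothesis into a statement about these coefficients. Exactly as in the proof of Lemma~\ref{lem:3.1}, domination in $C(x,y)$ is coordinatewise, so $M(x/y)=\max\{a_1/a_2,\,b_1/b_2\}$ and $M(y/x)=\max\{a_2/a_1,\,b_2/b_1\}=1/\min\{a_1/a_2,\,b_1/b_2\}$. Hence $M(x/y)=M(y/x)$ is equivalent to $\max\{a_1/a_2,b_1/b_2\}\cdot\min\{a_1/a_2,b_1/b_2\}=1$, that is, to $a_1b_1=a_2b_2$. Linear independence of $x$ and $y$ means $a_1/a_2\ne b_1/b_2$, so together with $a_1b_1=a_2b_2$ one of the ratios exceeds $1$ while the other is smaller than $1$. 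After possibly interchanging $u$ and $v$ I may therefore assume $a_1>a_2$ and $b_1<b_2$; the crucial fact extracted here is that the two coordinates move in \emph{opposite} directions as one passes from $x$ to $y$.

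Finally I would parametrise the line as $p(t)=x+t(y-x)$, whose coordinates are $a_1+t(a_2-a_1)$ and $b_1+t(b_2-b_1)$, and locate its intersections with the two rays. The $u$--coordinate vanishes at $t_1=a_1/(a_1-a_2)$ and the $v$--coordinate vanishes at $t_2=b_1/(b_1-b_2)$; both are well defined since $a_1\ne a_2$ and $b_1\ne b_2$. Using $a_1>a_2$ and $b_1<b_2$ one checks that $t_1>0$ while $t_2<0$, and that at $t_1$ the $v$--coordinate is strictly positive while at $t_2$ the $u$--coordinate is strictly positive; thus $p(t_1)\in R_v\setminus\{0\}$ and $p(t_2)\in R_u\setminus\{0\}$ are two genuine, distinct boundary points. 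Since the line does not contain either ray (it passes through the interior point $x$), it meets each ray in at most one point, so these are all of its boundary intersections, giving precisely two.

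I expect the main obstacle to be the verification that both intersection points fall on the \emph{positive} rays rather than on their negative extensions. Indeed, a line through the interior of a two--dimensional pointed cone can meet $\partial C(x,y)$ in a single point: this happens when the line is parallel to an extreme ray, or when $x$ and $y$ are $\le_C$--comparable, in which case the line stays inside the cone in one direction and exits only once. The opposite--sign behaviour of the coordinates, which is exactly what the equality $M(x/y)=M(y/x)$ buys us, is precisely what excludes both degeneracies and forces the line to leave the cone through both rays.
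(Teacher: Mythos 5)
Your proof is correct, but it runs along a different track from the paper's. The paper argues coordinate-free: writing $\beta=M(x/y)=M(y/x)>1$, it uses closedness of $C(x,y)$ to get that $m(x/y)$ and $m(y/x)$ are attained, so $x-\tfrac{1}{\beta}y$ and $y-\tfrac{1}{\beta}x$ lie in $\partial C(x,y)$, and then rescales these by the positive factor $\tfrac{\beta}{\beta-1}$ to land on the affine line through $x$ and $y$, obtaining the two boundary points $x'=\tfrac{\beta}{\beta-1}x-\tfrac{1}{\beta-1}y$ and $y'=\tfrac{\beta}{\beta-1}y-\tfrac{1}{\beta-1}x$ explicitly. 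You instead coordinatize via the structure theorem for $2$-dimensional cones, translate $M(x/y)=M(y/x)$ into $a_1b_1=a_2b_2$, and run a sign analysis showing the two coordinates move in opposite directions along the line. The two arguments encode the same underlying fact, but each buys something: the paper's version produces closed-form expressions for $x'$ and $y'$ that are reused later (in Theorem \ref{thm:4.3} and Figure \ref{Fig:4.2}), while yours makes the role of the hypothesis completely transparent, explicitly rules out the degenerate one-intersection configurations, and actually proves the ``precisely two'' claim (at most two intersections), which the paper leaves as an implicit consequence of convexity. One small note: your reduction relies on $M(\cdot/\cdot;C)=M(\cdot/\cdot;C(x,y))$, which is exactly the content of Lemma \ref{lem:3.00}, so your appeal to it is well placed.
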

\begin{proof}
Suppose that $x\sim_C y$ are linearly independent and $M(x/y)=M(y/x)$. 
Write $\beta =M(x/y)$. Note that $d_{C(x,y)}(x,y) =d_C(x,y)=\log \beta>1$ by Lemma \ref{lem:3.00}. So, $x\leq_{C(x,y)}\beta y$ and $y\leq_{C(x,y)}\beta x$, as $C(x,y)$ is 
closed. This implies that $x-\frac{1}{\beta}y\in\partial C(x,y)$ and $y-\frac{1}{\beta}x\in\partial C(x,y)$. Thus, 
\[
x'= \frac{\beta}{\beta -1} x -\frac{1}{\beta -1}y\in\partial C(x,y)
\mbox{\quad and\quad }
y'= \frac{\beta}{\beta -1} y -\frac{1}{\beta -1}x\in\partial C(x,y).
\]
Obviously, $x'$ and $y'$ also lie on the straight line through $x$ and $y$. 
\end{proof}

\begin{theorem}\label{thm:4.3} 
Let $C$ be an almost Archimedean cone in a vector space $V$. Suppose that $x\sim_C y$ are linearly independent elements of $C$ and $M(x/y) =M(y/x)$. Let $x',y'\in\partial C(x,y)$ be the points of intersection of the straight line through $x$ and $y$ such that $x$ is between $x'$ and $y$ and $y$ is between $y'$ and $x$.  The type I geodesic segment connecting $x$ and $y$ is a unique geodesic segment in $(C,d_C)$ if and only if 
there exist no $z\in V\setminus\{0\}$ and $\epsilon >0$ such that $x'+tz\in\partial C(x,y,z)$ and $y'+tz\in\partial C(x,y,z)$ for all $|t|<\epsilon$. 
\end{theorem}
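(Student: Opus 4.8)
The plan is to use the betweenness criterion of Lemma~\ref{lem:2.2}: the type I geodesic is a unique geodesic segment in $(C,d_C)$ precisely when every $w$ satisfying $d_C(x,w)+d_C(w,y)=d_C(x,y)$ lies on it. Write $\beta=M(x/y)=M(y/x)$, so $d_C(x,y)=\log\beta$. First I would reduce to three dimensions: by Lemma~\ref{lem:3.1} the geodesic between $x$ and $y$ is already unique inside the slice $C(x,y)$, so any finite betweenness point off the type I geodesic must be linearly independent of $x,y$, and I work in the closed $3$-dimensional cone $\hat C:=C(x,y,z)$ (resp.\ $C(x,y,w)$), where $d_C$ and the quantities $M(\cdot/\cdot)$ are computed in the slice as in Lemma~\ref{lem:3.00}. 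The two relevant objects are supporting functionals $\varphi_1,\varphi_2\in\hat C^*\setminus\{0\}$ with $\varphi_1(x')=0$ and $\varphi_2(y')=0$; since $x'=\tfrac{\beta}{\beta-1}x-\tfrac{1}{\beta-1}y$ and $y'=\tfrac{\beta}{\beta-1}y-\tfrac{1}{\beta-1}x$, this is equivalent to $\varphi_1(y)=\beta\varphi_1(x)$ and $\varphi_2(x)=\beta\varphi_2(y)$, so $\varphi_1,\varphi_2$ are exactly the functionals realizing $M(y/x)$ and $M(x/y)$.

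For the implication ``flatness $\Rightarrow$ non-uniqueness'' I would construct an explicit off-plane betweenness point. Using $x'+y'=x+y$ and $x=\tfrac{\beta}{\beta+1}x'+\tfrac{1}{\beta+1}y'$ (with the symmetric expression for $y$), set
\[
m_\tau:=\tfrac{\sqrt\beta}{\beta+1}\big((x'+\tau z)+(y'+\tau z)\big)=m_0+\tfrac{2\sqrt\beta}{\beta+1}\tau z,
\qquad m_0:=\tfrac{\sqrt\beta}{\beta+1}(x+y),
\]
where $m_0$ is the midpoint of the type I geodesic. A short computation rewrites $\sqrt\beta\,x-m_\tau$ and $m_\tau-\tfrac{1}{\sqrt\beta}x$ as positive multiples of $x'+c_1\tau z$ and $y'+c_2\tau z$, and the two analogous expressions with $y$ as positive multiples of $y'+c_3\tau z$ and $x'+c_4\tau z$. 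The flatness hypothesis is exactly the statement that $x'+tz$ and $y'+tz$ lie in $\hat C$ for small $|t|$, so all four expressions lie in $\hat C$; this gives $\tfrac{1}{\sqrt\beta}x\le_{\hat C}m_\tau\le_{\hat C}\sqrt\beta x$ and the same with $y$, whence $d_C(x,m_\tau),d_C(m_\tau,y)\le\tfrac12\log\beta$, and the triangle inequality forces their sum to equal $\log\beta$. Thus $m_\tau$ is a betweenness point, and for $\tau\neq0$ it leaves $V(x,y)$, so by Lemma~\ref{lem:2.2} the geodesic is not unique.

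For the converse I would extract the direction $z$ from a given off-plane betweenness point $w$. The key rigidity step pairs the elementary bound $\varphi(x)/\varphi(w)\le M(x/w)$ with the factorisation $\tfrac{\varphi_2(x)}{\varphi_2(w)}\cdot\tfrac{\varphi_2(w)}{\varphi_2(y)}=\tfrac{\varphi_2(x)}{\varphi_2(y)}=\beta$: comparing with $d_C(x,w)+d_C(w,y)=\log\beta$ forces all inequalities to be equalities, so $M(x/w)=M(w/x)=s$ and $M(w/y)=M(y/w)=t$ with $st=\beta$, and $\varphi_1(w)=s\varphi_1(x)$, $\varphi_2(w)=t\varphi_2(y)$. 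In particular $w$ is itself an endpoint of type I geodesics to both $x$ and $y$ (via Lemma~\ref{lem:4.2}). I then let $z$ span the line $\ker\varphi_1\cap\ker\varphi_2$, which is one-dimensional ($\varphi_1,\varphi_2$ are independent on $\hat C$, else $\beta=1/\beta$) and meets $V(x,y)$ only in $0$, so $\varphi_1(z)=\varphi_2(z)=0$ and $C(x,y,z)=\hat C$. Then $\varphi_1(x'+tz)\equiv0$ and $\varphi_2(y'+tz)\equiv0$, placing these families on supporting hyperplanes; what remains is to show they stay inside $\hat C$ for small $|t|$.

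This last point is the main obstacle: $z$ lies in the supporting hyperplane $\ker\varphi_1$ at $x'$, but a priori $x'$ could be an extreme ray of $\hat C$, giving room on only one side. To overcome it I would bring in the auxiliary boundary points of the sub-geodesics $[x,w]$ and $[w,y]$, namely $x'''=\tfrac{s}{s-1}x-\tfrac{1}{s-1}w$ and $w_y=\tfrac{t}{t-1}w-\tfrac{1}{t-1}y$, and check that $\varphi_1(x''')=\varphi_1(w_y)=0$, so both lie in the two-dimensional exposed face $F_1=\hat C\cap\ker\varphi_1$. The decisive identity is the barycentric relation $x'=a\,x'''+b\,w_y$ with $a,b>0$ (forced precisely by $st=\beta$), which puts $x'$ in the relative interior of $F_1$; hence $x'\pm tz\in F_1\subseteq\hat C$ for small $t$, and symmetrically $y'\pm tz\in F_2\subseteq\hat C$. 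Combined with the supporting-hyperplane observation this yields $x'+tz,\,y'+tz\in\partial\hat C=\partial C(x,y,z)$, establishing the flatness condition. I expect the only genuinely delicate bookkeeping to be tracking which auxiliary boundary point lands on which face, together with the repeated use that $d_C$ agrees with the metric of the finite-dimensional slice $\hat C$.
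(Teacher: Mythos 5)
Your proposal is correct, and the two directions compare differently with the paper. For ``flatness $\Rightarrow$ non-uniqueness'' you do essentially what the paper does: perturb the Thompson midpoint of the type I geodesic in the direction $z$ and check it remains a betweenness point; the paper computes $M(x/\zeta_\delta)$ and $M(\zeta_\delta/x)$ exactly via similar triangles, while you settle for the sandwich $\frac{1}{\sqrt\beta}x\le m_\tau\le\sqrt\beta\,x$ and let the triangle inequality force equality --- a harmless variation (you should note, as the paper does at the very end of its proof, that the flatness hypothesis automatically forces $z\notin\mathrm{span}\{x,y\}$, so that $m_\tau$ really leaves the geodesic). For the converse your route is genuinely different. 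The paper passes to Hilbert's metric on a cross-section of $K=C(x,y,\zeta)$, invokes the characterization of non-unique Hilbert geodesics from Papadopoulos (\cite[Theorem~5.6.7]{Pa}) to get the endpoints $u,v$ of the chord into relative interiors of two $2$-dimensional faces, and finds $z$ by a dimension count on the spans of those faces. You instead work entirely with Thompson's metric: the rigidity computation $\beta=\frac{\varphi_2(x)}{\varphi_2(w)}\cdot\frac{\varphi_2(w)}{\varphi_2(y)}\le M(x/w)M(w/y)\le e^{d_C(x,w)+d_C(w,y)}=\beta$ (and its $\varphi_1$-twin) pins down $M(x/w)=M(w/x)=s$, $M(w/y)=M(y/w)=t$, $st=\beta$, and the values $\varphi_1(w)=s\varphi_1(x)$, $\varphi_2(w)=t\varphi_2(y)$; the identity $x'=a\,x'''+b\,w_y$ with $a,b>0$ (which indeed checks out, the coefficient of $w$ vanishing exactly because $st=\beta$) then exhibits $x'$ in the relative interior of a $2$-dimensional subcone of the exposed face $\hat C\cap\ker\varphi_1$, and $z$ spanning $\ker\varphi_1\cap\ker\varphi_2$ does the job at both $x'$ and $y'$. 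What your version buys is self-containedness --- no appeal to Hilbert-geometry results or to the equivalence with Hilbert geodesics (which the paper only establishes separately in Lemma~\ref{lem:4.4}) --- at the price of more explicit bookkeeping; the paper's version is shorter given the cited machinery and makes the link to Hilbert's metric transparent. The remaining small points you gloss over (that $s,t>1$ because $w\notin\{x,y\}$, that $\varphi_1,\varphi_2$ are strictly positive at interior points so the divisions are legitimate, and that $x''',w_y$ are linearly independent because $x'''\notin\mathrm{span}\{w\}$) all verify routinely and are at the same level of detail as the paper's own argument.
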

\begin{proof}
Suppose that $z\in V\setminus\{0\}$ and $\epsilon>0$ are such that $x'+tz\in \partial C(x,y,z)$ and $y'+tz\in\partial C(x,y,z)$ whenever $|t|<\epsilon$. Let $\gamma$ be the type I geodesic segment connecting $x$ and $y$. Further let $\zeta$ be the point on $\gamma$  with the property 
\[
d_C(x,\zeta)=\frac{1}{2}d_C(x,y)=d_C(\zeta,y).
\] 
For $\delta>0$ define $\zeta_\delta=\zeta+\delta z$. Note that as $\zeta$ lies on $\gamma$, $M(x/\zeta)=M(\zeta/x)$. By Lemma \ref{lem:4.2} the straight line through $x$ and $\zeta$ intersects $\partial C(x,y)$ in two points $\tilde{x}$ and $\zeta'$, as in Figure \ref{Fig:4.1}.  Note that $\tilde{x}$ is a positive multiple of $x'$ and $\zeta'$ is a positive multiple of $y'$, as $\zeta\in\mathrm{span}\{x,y\}$. 

\begin{figure}[h]
\begin{center}
\thicklines
\begin{picture}(160,120)(0,0)
\put(0,10){\line(1,0){160}} 
\put(0,110){\line(1,0){160}} 
\put(80,10){\line(0,1){100}}
\put(30,10){\line(3,4){75}}
\put(45,30){\line(1,0){35}}

\put(80,10){\circle*{3.0}}\put(80,0){$\zeta'$}
\put(80,30){\circle*{3.0}}\put(85,30){$\zeta$}
\put(80,77){\circle*{3.0}}\put(85,75){$x$}
\put(80,110){\circle*{3.0}}\put(80,115){$\tilde{x}$}
\put(30,10){\circle*{3.0}}\put(30,0){$\zeta'_\delta$}
\put(45,30){\circle*{3.0}}\put(30,30){$\zeta_\delta$}
\put(105,110){\circle*{3.0}}\put(105,115){$x'_\delta$}

\end{picture}
\caption{The points in the boundary\label{Fig:4.1}}
\end{center}
\end{figure}
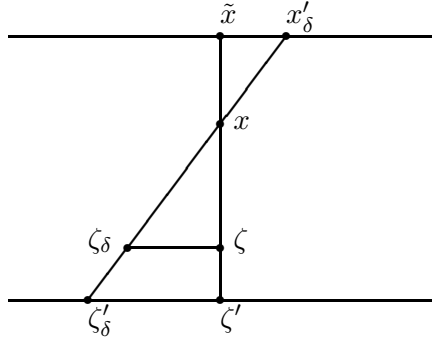

If $s>1$ is such that $sx+(1-s)\zeta=\tilde{x}$, then 
\[
sx+(1-s)\zeta_\delta=sx+(1-s)\zeta+(1-s)\delta z=\tilde{x}+(1-s)\delta z. 
\] 
As $\tilde{x}$ is a multiple of $x'$, $\tilde{x}+\lambda z\in\partial C(x,y,z)$ for all $\lambda\in\mathbb{R}$ with $|\lambda |$ small. 
Thus for all $\delta>0$ sufficiently small  $x'_\delta=\tilde{x}+(1-s)\delta z\in\partial C(x,y,z)$. 
Similarly, if we let $t<0$ be such that $\zeta'= tx+(1-t)\zeta$, then 
\[ 
tx+(1-t)\zeta_\delta= tx +(1-t)\zeta +(1-t)\delta z=\zeta'+(1-t)\delta z.
\]
As $\zeta'$ is a multiple of $y'$, the point $\zeta'_\delta=\zeta'+(1-t)\delta z\in\partial C(x,y,z)$ for all $\delta >0$ small. 
Note also that if $x'_\delta,\zeta'_\delta\in\partial C(x,y,z)$, then  $\zeta_\delta \in C(x,y,z)$. 

Recall that $\tilde{x}=sx -(1-s)\zeta $ and $\zeta'=tx-(1-t)\zeta$. Using  similarity of triangles in  Figure \ref{Fig:4.2}, we see  that 
\begin{figure}[h]
\begin{center}
\thicklines
\begin{picture}(150,100)(0,0)
\put(50,0){\line(-1,2){50}} 
\put(50,0){\line(1,1){100}}
\put(0,80){\line(1,0){150}} \put(30,80){\circle*{3.0}}
\put(30,85){$x$} \put(90,80){\circle*{3.0}}\put(90,85){$\zeta$}
\put(50,0){\line(1,2){40}} 
\put(30,80){\line(-1,-2){10}}
\put(30,80){\line(1,-4){20}}
\put(90,80){\line(1,-4){8}}
\put(98,48){\circle*{3.0}}\put(103,42){$\zeta -M(x/\zeta)^{-1}x$}
\put(20,60){\circle*{3.0}}\put(-60,55){$x-M(\zeta/x)^{-1}\zeta$}\put(10,80){\circle*{3.0}}\put(12,85){$\tilde{x}$}
\put(130,80){\circle*{3.0}}\put(123,85){$\zeta'$} 
\end{picture}
\caption{ Identities \label{Fig:4.2}}
\end{center}
\end{figure}
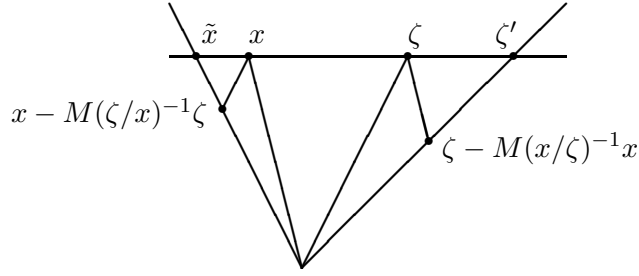
 
\[
M(\zeta/x)=\frac{s}{s-1}
\mbox{\quad and\quad } M(x/\zeta)=\frac{t}{t-1}.
\] 

Since   $x'_\delta=sx -(1-s)\zeta_\delta $ and $\zeta'_\delta=tx-(1-t)\zeta_\delta$, we can derive in the same way that 
\[
M(\zeta_\delta/x)=\frac{s}{s-1}
\mbox{\quad and\quad } M(x/\zeta_\delta)=\frac{t}{t-1}.
\]
This implies that  $d_C(x,\zeta_\delta)=d_C(x,\zeta)=\frac{1}{2}d_C(x,y)$.
Analogously, for $\delta>0$ small enough we  have $d_C(\zeta_\delta,y)=d_C(\zeta,y)=\frac{1}{2}d_C(x,y)$. It now follows from Lemma \ref{lem:2.2} that $\gamma$ is not a unique geodesic segment. 

Conversely, suppose that $\gamma$ is not a unique geodesic segment connecting $x$ and $y$ in $(C,d_C)$. It follows from Lemmas \ref{lem:2.2} and \ref{lem:3.1} that there exists an element $\zeta\in C\setminus C(x,y)^\circ$ such that $d_C(x,\zeta)+d_C(\zeta,y)=d_C(x,y)$. As
\begin{eqnarray*}
d_C(x,y)&= &\log M(x/y)\\
 & \leq & \log (M(x/\zeta ) M(\zeta /y))\\
  & =& \log M(x/\zeta)+\log M(\zeta /y)\\
   & \leq &  d_C(x,\zeta)+d_C(\zeta,y)\\
   &= & d_C(x,y),
\end{eqnarray*}
we have that $d_C(x,\zeta)=\log M(x/\zeta)$ and $d_C(\zeta,y)=\log M(\zeta/y)$. Also, since $M(x/y)=M(y/x)$, we have $d_C(x,\zeta)=\log M(\zeta/x)$ and $d_C(\zeta,y)=\log M(y/\zeta)$. Write $K=C(x,y,\zeta)$. Then $K$ is a 3-dimensional closed cone in $W=\mathrm{span}\{x,y,\zeta\}$, with $x$, $y$ and $\zeta$ in its interior. Let $\varphi\colon W\to\mathbb{R}$ be a strictly positive functional.  Such a functional exists, since $K$ is a finite dimensional closed cone, see \cite[Lemma 1.2.4]{LNBook}. Consider the bounded convex set 
$\Sigma_\varphi^\circ=\{w\in K^\circ:\varphi(w)=1\}$. 
 
Now, for the Hilbert metric $\delta_K$ on $\Sigma_\varphi^\circ$ and the elements $[x]=x/\varphi(x)$, $[y]=y/\varphi(y)$ and $[\zeta]=\zeta/\varphi(\zeta)$, our previous findings together with the scalar invariance of $\delta_K$ imply that
\begin{eqnarray*}
\delta_K([x],[y])&\leq & \delta_K([x],[\zeta])+\delta_K([\zeta],[y])\\
& =& \delta_K(x,\zeta)+\delta_K(\zeta,y)\\
&= & 2d_C(x,\zeta)+2d_C(\zeta,y)\\
& =& 2d_C(x,y)\\
& =& \delta_K(x,y)\\
&=& \delta_K([x],[y]).
\end{eqnarray*}
Straight line segments are geodesic segments in $(\Sigma_\varphi^\circ,\delta_K)$, see  for example \cite[Section 5.6]{Pa}.  So, it follows from the previous equality and  Lemma \ref{lem:2.2} that there exists more than one geodesic segment in $(\Sigma_\varphi^\circ,\delta_K)$ connecting $[x]$ and $[y]$. This implies that there exists two straight line segments $I_x$ and $I_y$ in $\partial \Sigma_\varphi^\circ$ such that the endpoints $u\in\partial K$ and $v\in\partial K$ of the straight line segment through $[x]$ and $[y]$ lie in the relative interiors of $I_x$ and $I_y$, respectively, see for example \cite[Theorem 5.6.7]{Pa}. Thus, $u$ and $v$ lie in the relative interiors of two distinct 2-dimensional faces of $K$. Since $W=\mathrm{span}\{x,y,\zeta\}$ is 3-dimensional, it follows that the intersection of the span of these two faces is non-trivial. To that end, let $z\neq 0$ be a point in the intersection of the spans of the faces of $u$ and $v$ in $W$. Then there exists $\eta>0$ such that 
$u+\mu z\in\partial K$ and $v+\mu z\in\partial K$ whenever $|\mu|<\eta$. 

As $x'=\alpha u$ and $y'=\beta v$ for some $\alpha,\beta>0$, we conclude that there exists an $\epsilon>0$ such that $x'+tz\in\partial K$ and $y'+tz\in\partial K$ whenever 
$|t|<\epsilon$. To finish the proof, it remains to be shown that $K=C(x,y,z)$. To establish this equality, we argue by contradiction that $z\not\in\mathrm{span}\{x,y\}$. 
We know that there exists linearly independent functionals $\psi_1$ and $\psi_2$ on $V(x,y)$ such that 
\[
C(x,y)=\{w\in V(x,y)\colon \psi_1(w)\geq 0\mbox{ and }\psi_2(w)\geq 0\}
\]
and $\psi_1(x')=0=\psi_2(y')$. So, $\psi_1(x'+tz)=t\psi_1(z)\geq 0$ and $\psi_2(y'+tz)=t\psi_2(z)\geq 0$ for all $|t|<\epsilon$. So, $\psi_1(z)=0=\psi_2(z)$, and hence  $z=0$, as $\psi_1$ and $\psi_2$ are linearly independent, which is impossible. 
\end{proof}
Note that if $C$ is a closed cone with nonempty interior in a normed space $V$, then $C^\circ$ is a part of $C$. In that case, if  $x,y\in C^\circ$ and  $M(x/y)=M(y/x)$ then  the type I geodesic segment connecting $x$ and $y$ is unique if and only if there exists no $z\in V\setminus\{0\}$ and $\epsilon >0$ such that $x'+tz\in\partial C$ and $y'+tz\in\partial C$ for all $|t| < \epsilon $.

The type I unique geodesics in $(C, d_C)$ are closely related to  unique Hilbert's metric geodesics as the following lemma shows. 
\begin{lemma}\label{lem:4.4} Let $C$ be an almost Archimedean cone in a  vector space $V$. Suppose that  $\varphi\in V^*$ is a strictly positive functional and let $\Sigma_\varphi=\{x\in C\colon\varphi(x)=1\}$. If $x\sim_C y$ are linearly independent elements in $C$ and $M(x/y)=M(y/x)$, then the type I geodesic connecting $x$ and $y$ is unique in $(C,d_C)$  if and only if the straight line segment connecting $[x]=x/\varphi(x)$ and $[y]=y/\varphi(y)$ is a unique geodesic in $(\Sigma_\varphi\cap P_x,\delta_C)$, where $P_x$ is the part of $x$. 
\end{lemma}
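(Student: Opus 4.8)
The plan is to read Lemma~\ref{lem:4.4} as the bridge between the two uniqueness criteria already developed in this section: the type~I Thompson criterion of Theorem~\ref{thm:4.3} and the classical characterization of uniqueness for Hilbert's metric geodesics in finite dimensions (the statement \cite[Theorem 5.6.7]{Pa} that was invoked in the proof of Theorem~\ref{thm:4.3}). By Theorem~\ref{thm:4.3} the type~I geodesic through $x$ and $y$ is unique in $(C,d_C)$ exactly when there is \emph{no} $z\in V\setminus\{0\}$ and $\epsilon>0$ with $x'+tz,\,y'+tz\in\partial C(x,y,z)$ for all $|t|<\epsilon$, where $x',y'$ are the boundary points of Lemma~\ref{lem:4.2}. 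Thus the whole lemma reduces to the single equivalence: such a direction $z$ exists if and only if the straight segment between $[x]$ and $[y]$ fails to be the unique $\delta_C$-geodesic in $\Sigma_\varphi\cap P_x$. Throughout I record that the chord through $[x]$ and $[y]$ lies in $P_x$ and meets the relative boundary of $\Sigma_\varphi\cap P_x$ precisely at the projectivizations $[x']$ and $[y']$, and that on $\Sigma_\varphi\cap P_x$ the metric $\delta_C$ is Hilbert's cross-ratio metric.

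The tool that lets me pass between the possibly infinite-dimensional cross-section $\Sigma_\varphi\cap P_x$ and a finite-dimensional slice is the consistency of Hilbert's metric under affine slicing. If $p,q\in\Sigma_\varphi\cap P_x$ and $H$ is an affine subspace containing $p,q$, then the chord of the line through $p,q$ leaves $\Sigma_\varphi\cap P_x$ and $(\Sigma_\varphi\cap P_x)\cap H$ at the same two boundary points, because that line lies in $H$; hence the cross-ratios, and so the $\delta_C$-distances, agree, and a path inside $H$ is a $\delta_C$-geodesic in the slice if and only if it is one in $\Sigma_\varphi\cap P_x$. In particular a second $\delta_C$-geodesic between $[x]$ and $[y]$ passes through some $[\zeta]$, and all the relevant geometry then lives in the bounded cross-section of the $3$-dimensional slice $K=C(x,y,\zeta)$, where \cite[Theorem 5.6.7]{Pa} applies.

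For ``$z$ exists $\Rightarrow$ Hilbert non-unique'' I take the transverse direction $z$ (transversality, i.e.\ $z\notin\mathrm{span}\{x,y\}$, is part of the conclusion of Theorem~\ref{thm:4.3}), so in the at most $2$-dimensional cross-section $\Sigma_\varphi\cap C(x,y,z)^\circ$ the boundary carries nondegenerate segments through $[x']$ and $[y']$ in the common direction of $z$; by \cite[Theorem 5.6.7]{Pa} both chord endpoints lying in relative interiors of boundary segments forces a second Hilbert geodesic there, which by slice consistency is a second $\delta_C$-geodesic in $\Sigma_\varphi\cap P_x$. (Alternatively the detour points $\zeta_\delta$ constructed in the proof of Theorem~\ref{thm:4.3} realize $\delta_C$-additivity after the routine cross-ratio computation.) For the converse ``Hilbert non-unique $\Rightarrow$ $z$ exists'' I run the second half of the proof of Theorem~\ref{thm:4.3}: a second $\delta_C$-geodesic gives a detour $[\zeta]$, hence by \cite[Theorem 5.6.7]{Pa} two boundary segments $I_{x'}\ni[x']$ and $I_{y'}\ni[y']$ of the cross-section of $K=C(x,y,\zeta)$; since $\dim K=3$ the spans of the two corresponding faces meet in a line, and a nonzero $z$ on that line is transverse to $\mathrm{span}\{x,y\}$ and satisfies $x'+tz,\,y'+tz\in\partial C(x,y,z)$ for small $|t|$. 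Combining the two sub-directions with Theorem~\ref{thm:4.3} gives the lemma.

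The main obstacle I anticipate is the bookkeeping that matches the boundary structure of the part cross-section $\Sigma_\varphi\cap P_x$ with that of the finite-dimensional slices $C(x,y,z)$ and $K=C(x,y,\zeta)$ when $C$ is neither closed nor finite-dimensional: one must check that the chord endpoints $[x'],[y']$, the boundary segments, and the extracted direction $z$ all transfer correctly, and that $[x'],[y']$ are genuine finite-level boundary points of the cross-section. The affine-slicing consistency of $\delta_C$ above, together with Lemma~\ref{lem:3.00} and Corollary~\ref{lem:3.0}, which already confine the metric geometry to $C(x,y)$ and $C(x,y,z)$, is what dissolves this difficulty, so that \cite[Theorem 5.6.7]{Pa} may be applied safely in a bounded $2$- or $3$-dimensional convex set. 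The only genuinely new computation is the verification that flat boundary arcs at both chord endpoints yield $\delta_C$-additive detours, which is the standard cross-ratio calculation in a planar Hilbert slice.
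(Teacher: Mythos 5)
Your proposal is correct in outline but follows a genuinely different route from the paper's. The paper proves Lemma~\ref{lem:4.4} by a direct, purely metric argument that never touches the boundary of the cone: for the forward direction it takes a Thompson detour point $z$, uses the balance $M(x/y)=M(y/x)$ and the chain $\log M(x/y)\le \log M(x/z)+\log M(z/y)\le d_C(x,z)+d_C(z,y)=d_C(x,y)$ to force $M(x/z)=M(z/x)$ and $M(y/z)=M(z/y)$, whence $\delta_C=2d_C$ on all the relevant pairs and $[z]$ is a Hilbert detour point; for the converse it takes a Hilbert detour point $w$, rescales it to $\lambda w$ and $\mu w$ with $\lambda=M(x/w)^{1/2}M(w/x)^{-1/2}$ and $\mu=M(y/w)^{1/2}M(w/y)^{-1/2}$ so that each rescaled point is Thompson-balanced against $x$ resp.\ $y$, and shows $\lambda=\mu$ by a strict-inequality contradiction, so that $\lambda w$ is a Thompson detour point. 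Your route instead composes the boundary criterion of Theorem~\ref{thm:4.3} with the classical boundary characterization of unique Hilbert geodesics, i.e.\ you show the two ``flatness'' conditions coincide. That is conceptually attractive and makes the geometric content of the lemma visible, but it costs more: (i) you need the full ``if and only if'' of the Hilbert criterion, in particular the direction ``coplanar boundary segments at both chord endpoints imply a detour point exists'', which the paper never uses and which you defer to ``the standard cross-ratio calculation''; (ii) your converse is not a consequence of the \emph{statement} of Theorem~\ref{thm:4.3} but of the second half of its \emph{proof}, rerun with a Hilbert detour point as input, so you are re-opening that argument rather than quoting the theorem; (iii) the bookkeeping you flag --- whether $\varphi(x')$ and $\varphi(y')$ are strictly positive, so that $[x']$ and $[y']$ are genuine boundary points of the cross-section of an almost Archimedean cone that need not be closed --- is a real issue that the paper's metric proof simply sidesteps; in the proof of Theorem~\ref{thm:4.3} it is handled by choosing a fresh strictly positive functional on the finite-dimensional slice, and you would need to do the same and then invoke projective invariance of $\delta_C$ to return to the given $\varphi$. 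With those points filled in, your argument is a valid alternative proof.
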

\begin{proof} 
It is known, see \cite[Proposition 1.9]{Nmem1}, that straight lines are geodesic segments in $(\Sigma_\varphi\cap P_x,\delta_C)$. Now suppose that the type I geodesic segment connecting $x$ and $y$ in $(C,d_C)$ is not unique. Then there exists $z\in P_x$ with $z\not\in \mathrm{span}\{x,y\}$ and 
\[
d_C(x,y)=d_C(x,z)+d_C(z,y). 
\]
As $M(x/y)=M(y/x)$, we have 
\begin{eqnarray*}
\log M(x/y) & = & d_C(x,y)\\
  & = & d_C(x,z)+d_C(z,y)\\
  & \geq &\log M(x/z)+\log M(z/y) \\
  &\geq & \log M(x/y),
\end{eqnarray*}
so that $d_T(x,z) =\log M(x/z)$ and $d_T(z,y) = \log M(z/y)$. Using the fact that $\log M(y/x)=d_T(x,y)$, it can be shown in the same way that $d_T(x,z) =\log M(z/x)$ and $d_T(z,y) = \log M(y/z)$. Thus, 
\begin{equation}\label{eq:3a}
M(x/z)=M(z/x)\mbox{\quad and \quad} M(y/z)=M(z/y). 
\end{equation}

Writing $[u]=u/\varphi(u)$ for $u\in C\setminus\{0\}$, it now follows from (\ref{eq:3a}) that 
\[
\delta_C([x],[y])= 2d_C(x,y) = 2d_C(x,z)+2d_C(z,y) = \delta_C([x],[z])+\delta_C([z],[y]).
\] 
As $z\not\in \mathrm{span}\{x,y\}$, $[z]$ is not on the straight line segment connecting  $[x]$ and $[y]$. It now follows from Lemma \ref{lem:2.2} that there is more than one geodesic segment connecting $[x]$ and $[y]$ in $(\Sigma_\varphi\cap P_x,\delta_C)$. 

Conversely, suppose that the straight line segment connecting $[x]$ and $[y]$ is not a unique Hilbert's metric geodesic in $\Sigma_\varphi\cap P_x$. Then there exists $w\in \Sigma_\varphi\cap P_x$ with $w\not\in\mathrm{span}\{x,y\}$ such that 
\begin{equation}\label{eq:3b} 
\delta_C([x],[y])=\delta_C([x],w)+\delta_C(w,[y]).
\end{equation}
Recall that $M(x/y)=M(y/x)$. So, for 
\[
\lambda = M(x/w)^{1/2}M(w/x)^{-1/2}\mbox{\quad and \quad}
\mu = M(y/w)^{1/2}M(w/y)^{-1/2}
\]
we have that 
\begin{equation}\label{eq:3c}
M(x/\lambda w)=M(\lambda w/x)\mbox{\quad and \quad} 
M(y/\mu w)=M(\mu w/y). 
\end{equation}
We will show by contradiction that $\lambda=\mu$. Without loss of generality assume that $\lambda<\mu$. Note that $2d_C(x,\lambda w)=\delta_C([x],w)$ and $2d_C(\mu w,y) = \delta_C(w,[y])$, so that  
\begin{equation}\label{eq:3d} 
d_C(x,\lambda w)+d_C(y,\mu w) = d_C(x,y)
\end{equation}
by (\ref{eq:3b}). As $\lambda<\mu$, $M(\lambda w/\mu w)=\lambda/\mu<1$, and hence  it follows from (\ref{eq:3c}) and (\ref{eq:3d}) that 
\begin{eqnarray*}
\log M(x/y) & \leq  & \log\left ( M(x/\lambda w)M(\lambda w/\mu w)M(\mu w/y)\right )\\
  & < & \log M(x/\lambda w) +\log M(\mu w/y)\\
  & = & d_C(x,\lambda w)+d_C(y,\mu w)\\
  & = & d_C(x,y) \\
  & = & \log M(x/y),
\end{eqnarray*}
which is absurd, and hence $\lambda=\mu$. 

This implies that $d_C(x,y)=d_C(x,\lambda w)+d_C(\lambda w,y)$. As $\lambda w\not\in\mathrm{span}\{x,y\}$ and $(C,d_C)$ contains the type I geodesic segment in $C(x,y)$ connecting $x$ and $y$, it follows from Lemma \ref{lem:2.2} that this type I geodesic segment is not a unique geodesic segment in $(C,d_C)$. 

\end{proof}

\section{Unique geodesics in unital  $C^*$-algebras}
In this section $A$ will denote a unital $C^*$-algebra and $\Re (A)$ will be the real vector space of the self-adjoint elements in $A$. For standard results in the theory of $C^*$-algebras we refer the reader to \cite{Con}. In $\Re(A)$ all elements have real spectra, which yields a closed cone $A_+=\{a\in\Re(A)\colon \sigma(a)\subseteq [0,\infty)\}$, where  $\sigma(a)$ denotes the spectrum of $a$. It is well known that the interior, $A_+^\circ$, of $A_+$ is the set of those $a\in A_+$ that are invertible.  Moreover, $A^\circ_+$ is a part of $A_+$ and for $a,b\in A_+^\circ$ we have that 
\[
d_{A_+}(a,b) = \|\log (b^{-1/2}ab^{-1/2})\|, 
\]
see for example \cite{ACS}. 

For $x\in A_+^\circ$ we define the linear map $\psi_x\colon \Re(A)\to\Re(A)$ by $\psi_x(a)=x^{-1/2}ax^{-1/2}$. Note that if $a\in A_+$, then  $\psi_x(a)=(x^{-1/2}a^{1/2})(x^{-1/2}a^{1/2})^*$, so that $\psi_x(a)\in A_+$, and hence  
$\psi_x(A_+)\subseteq A_+$. In fact, $\psi_x$ is an invertible linear map that maps $A_+$ onto itself.  It follows from \cite[Corollary 2.1.4]{LNBook} that $\psi_x$ is a Thompson's metric  isometry on $A_+^\circ$.  This isometry will be useful in the sequel.

For $a\in A_+^\circ$, we write 
\[
\lambda_+(a) =\max\{\lambda\colon\lambda\in\sigma(a)\}\mbox{\quad and\quad }
\lambda_-(a) =\min\{\lambda\colon\lambda\in\sigma(a)\}.
\] 
Using this notation, we have for $a,b\in A_+^\circ$ that $a\leq \beta b$ if and only if $b^{-1/2}ab^{-1/2}\leq \beta e$, where $e$ is the unit in $A$. So, 
\[
M(a/b) =\inf\{\beta>0\colon \sigma(\beta e -b^{-1/2}ab^{-1/2})\subseteq [0,\infty)\} = \lambda_+(b^{-1/2}ab^{-1/2}). 
\]
Likewise $\alpha b\leq a$ is equivalent to 
$\sigma(b^{-1/2}ab^{-1/2} -\alpha e)\subseteq [0,\infty)$, so that 
$M(b/a)=m(a/b)^{-1} = \lambda_-(b^{-1/2}ab^{-1/2})^{-1}$.   
So, for $a,b\in A_+^\circ$ we have that  
\[
d_{A_+}(a,b) =\log\left (\max \{\lambda_+(b^{-1/2}ab^{-1/2}),\lambda_-(b^{-1/2}ab^{-1/2})^{-1}\}\right).
\]
We have the following characterization for the unique geodesic in $A^\circ_+$.
\begin{theorem}\label{thm:5.1}
Let $A$ be a unital $C^*$-algebra. If $x$ ad $y$ are linearly independent elements of $ A_+^\circ$, then there exists a unique geodesic segment connecting $x$ and $y$ in $(A_+^\circ,d_{A_+})$ if and only if $\sigma(x^{-1/2}yx^{-1/2})=\{\beta^{-1},\beta\}$ for some $\beta>1$.
\end{theorem}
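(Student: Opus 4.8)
The plan is to move everything to the base point $e$ by means of the isometry $\psi_x$, and then let the spectral calculus of $a:=x^{-1/2}yx^{-1/2}$ do the work. Since $\psi_x(b)=x^{-1/2}bx^{-1/2}$ is a $d_{A_+}$-isometry of $A_+^\circ$ with $\psi_x(x)=e$ and $\psi_x(y)=a$, and since $\psi_x$ is linear and invertible, the pair $x,y$ is joined by a unique geodesic if and only if $e,a$ is, the pair $x,y$ is linearly independent if and only if $a$ is not a scalar multiple of $e$, and the relevant spectrum is exactly $\sigma(a)$; so I may take $x=e$ and $y=a$. From $M(e/a)=\lambda_-(a)^{-1}$ and $M(a/e)=\lambda_+(a)$, Corollary \ref{cor:3.2} shows that the existence of a unique geodesic forces $M(e/a)=M(a/e)$, i.e. $\lambda_-(a)\lambda_+(a)=1$. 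Writing $\beta=\lambda_+(a)$, this means $\sigma(a)\subseteq[\beta^{-1},\beta]$ with both endpoints attained, and $\beta>1$ because $a$ is not a scalar. In this regime Lemma \ref{lem:4.2} supplies the boundary points $x'=\frac{1}{\beta-1}(\beta e-a)$ and $y'=\frac{1}{\beta-1}(\beta a-e)$, and I will test uniqueness through Theorem \ref{thm:4.3}. The only fact I need about the finite-dimensional sections is that, for a finite-dimensional subspace $W$ containing $e$, a positive element $w\in A_+\cap W$ lies on the relative boundary exactly when $w$ is not invertible: an invertible positive element is interior to $A_+$, whereas for a singular $w$ the points $w-\delta e\in W$ escape $A_+$.

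For the sufficiency, assume $\sigma(a)=\{\beta^{-1},\beta\}$. Then $\beta^{\pm1}$ are isolated in the spectrum, hence eigenvalues, and $a=\beta P_\beta+\beta^{-1}P_{1/\beta}$ with $P_\beta+P_{1/\beta}=e$. Hence $\beta e-a=(\beta-\beta^{-1})P_{1/\beta}$ and $\beta a-e=(\beta^2-1)P_\beta$, so $x'$ is a positive multiple of $P_{1/\beta}$ and $y'$ a positive multiple of $P_\beta$. Suppose, contrary to uniqueness, that the criterion of Theorem \ref{thm:4.3} fails, i.e. there are $z=z^*\neq0$ and $\epsilon>0$ with $x'+tz$ and $y'+tz$ on the cone boundary, and in particular positive, for all $|t|<\epsilon$. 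Compressing $x'\pm tz\geq0$ by $P_\beta$ annihilates $x'$ and leaves $\pm t\,P_\beta zP_\beta\geq0$, so $P_\beta zP_\beta=0$; using the corner identity that $w\geq0$ with $P_\beta wP_\beta=0$ forces $wP_\beta=0$, I get $zP_\beta=0$ and thus $z=P_{1/\beta}zP_{1/\beta}$. The symmetric argument for $y'\pm tz$ gives $z=P_\beta zP_\beta$, and together these force $z=0$, a contradiction. So the type I geodesic is the unique geodesic.

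For the necessity I argue by contraposition: assume $M(e/a)=M(a/e)$ (which holds whenever a unique geodesic exists) but $\sigma(a)\neq\{\beta^{-1},\beta\}$, and exhibit a forbidden direction. Choose $\mu\in\sigma(a)\cap(\beta^{-1},\beta)$, a closed interval $[\gamma_1,\gamma_2]\subset(\beta^{-1},\beta)$ with $\mu\in[\gamma_1,\gamma_2]$, and set $z=P:=\chi_{[\gamma_1,\gamma_2]}(a)\neq0$. As $P$ commutes with $a$, hence with $\beta e-a$ and $\beta a-e$, the functional calculus shows that $x'+tP$ and $y'+tP$ remain positive for small $|t|$ and remain singular because $\beta$ and $\beta^{-1}$ persist in the spectrum of $a$ on $\mathrm{range}(e-P)$; thus both points lie on $\partial C(e,a,P)$ for $|t|<\epsilon$, and $P\notin\mathrm{span}\{e,a\}$ since $a$ now has at least three spectral values. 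Theorem \ref{thm:4.3} then yields a second geodesic, which is the desired contradiction. The main obstacle is the sufficiency step: one must rule out an entire continuum of candidate perturbations $z$, and do so without finite-dimensionality. The mechanism that makes this work is the degeneration of $\beta e-a$ and $\beta a-e$ into multiples of the complementary projections $P_{1/\beta}$ and $P_\beta$, combined with the compression identity, which together leave no room for a direction keeping both $x'$ and $y'$ on the boundary; by contrast, once a spectral value $\mu$ is trapped strictly between $\beta^{-1}$ and $\beta$, the commuting projection $\chi_{[\gamma_1,\gamma_2]}(a)$ provides such a direction even when $\mu$ is not an eigenvalue.
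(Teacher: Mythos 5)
Your sufficiency argument is correct and is in fact a nice variant of the paper's: where the paper passes to a Hilbert-space representation via Gelfand--Naimark and does a $2\times 2$ block computation to kill the perturbation $v$, you stay inside $A$ and use compression by the spectral projections $P_\beta$, $P_{1/\beta}$ (which do lie in $A$ because the two-point spectrum makes their indicator functions continuous) together with the corner identity $PwP=0,\ w\geq 0\Rightarrow wP=0$. That is cleaner and representation-free, and the reduction to $e$ and $a=x^{-1/2}yx^{-1/2}$, the use of Corollary \ref{cor:3.2}, and the identification of relative boundary points of finite-dimensional sections containing $e$ with non-invertible positive elements are all handled correctly.

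The necessity direction, however, has a genuine gap: you take as your perturbation the spectral projection $P=\chi_{[\gamma_1,\gamma_2]}(a)$, but $\chi_{[\gamma_1,\gamma_2]}$ is not a continuous function on $\sigma(a)$ in general, so $P$ is an element of the enveloping von Neumann algebra (or of $A^{**}$), not of $A$. Theorem \ref{thm:4.3} requires the direction $z$ to lie in the ambient vector space $\Re(A)$ itself. For a concrete failure, take $A=C([\beta^{-1},\beta])$ and $a=\mathrm{id}$: then $\sigma(a)=[\beta^{-1},\beta]$, $M(a/e)=M(e/a)=\beta$, the geodesic is indeed non-unique, but $A$ contains no nontrivial projections at all, so your witness does not exist. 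This is exactly the difficulty the paper's proof is engineered to avoid: it uses a \emph{continuous} bump function $f_\delta$ with $f_\delta(\lambda)=1$ and support in $[\lambda-\delta,\lambda+\delta]$, forms $\zeta_\delta=\phi_z(f_\delta)\in A$, and exhibits $(z+\epsilon\zeta_\delta)^{1/2}$ as a second metric midpoint, contradicting uniqueness via Lemma \ref{lem:2.2}. Your argument can be repaired along the same lines while keeping your Theorem \ref{thm:4.3} framework: replace $P$ by $f(a)$ for a continuous $f$ with $0\leq f\leq 1$, $f(\mu)=1$ and $\mathrm{supp}(f)\subseteq[\gamma_1,\gamma_2]\subset(\beta^{-1},\beta)$; then $x'+tf(a)$ and $y'+tf(a)$ remain positive and non-invertible for small $|t|$ by the same functional-calculus estimate, and $f(a)\notin\mathrm{span}\{e,a\}$ because $f$ vanishes at $\beta^{\pm1}$ but equals $1$ at $\mu$, which no affine function can do. As written, though, the necessity half does not go through for a general unital $C^*$-algebra.
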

\begin{proof} 
Note that  there is a unique geodesic segment connecting $x$ and $y$ in $A_+^\circ$ if and only if there is a unique geodesic segment  connecting $\psi_x(x)=e$ and $\psi_x(y)=x^{-1/2}yx^{-1/2}$, as $\psi_x$ is an isometry. Thus, it suffices to show that there is a unique geodesic segment connecting $e$ and $z\in A_+^\circ$ if and only if  
$\sigma(z)=\{\beta^{-1},\beta\}$ for some $\beta>1$ whenever $e$ and $z$ are linearly independent. 

Suppose first that there exists a unique geodesic segment connecting $z$  and $e$ in $A_+^\circ$, where $z$ and $e$ are linearly independent.  It follows from Corollary \ref{cor:3.2} that $\lambda_+(z) = M(z/e) =M(e/z)=\lambda_-(z)^{-1}$. 
This yields the inclusions, 
\[
\{\lambda_+(z)^{-1},\lambda_+(z)\}\subseteq\sigma(z)\subseteq [\lambda_+(z)^{-1},\lambda_+(z)].
\] 
Suppose that there exists $\lambda\in\sigma(z)$ such that $\lambda_+(z)^{-1}<\lambda<\lambda_+(z)$. Let $\delta>0$ be such that $\lambda_+(z)^{-1}<\lambda-\delta<\lambda+\delta<\lambda_+(z)$, then there is a continuous function $f_\delta\colon [\lambda_+(z)^{-1},\lambda_+(z)]\to[0,1]$ with $f_\delta(\lambda)=1$ and $\mathrm{supp}(f_\delta)\subseteq[\lambda-\delta,\lambda+\delta]$. Furthermore, let $g\colon[\lambda_+(z)^{-1},\lambda_+(z)]\to [\lambda_+(z)^{-1},\lambda_+(z)]$ be the identity map. Using the functional calculus $\phi_z\colon C(\sigma(z))\to C^*(z,e)$ we can define $\zeta_\delta=\phi_z(f_\delta)\in C^*(z,e)$, where $C^*(z,e)$ is the $C^*$-algebra generated by $z$ and $e$.  For $\epsilon>0$ we have that $\phi_z(g+\epsilon f_\delta)=z+\epsilon\zeta_\delta$ with $g+\epsilon f_\delta\geq 0$. So, 
\[
\phi_z((g+\epsilon f_\delta)^{1/2})=(z+\epsilon\zeta_\delta)^{1/2}
\] 
and $\sigma((z+\epsilon\zeta_\delta)^{1/2})=\{(g(t)+\epsilon f_\delta(t))^{1/2}:t\in\sigma(z)\}$ by the spectral mapping theorem. So, by choosing $\epsilon>0$ such that 
$(1=\epsilon)(\lambda+\delta)<\lambda_+(z)$, we find that 
\[
\{\lambda_+(z)^{-1/2},\lambda_+(z)^{1/2}\}\subseteq\sigma((z+\epsilon\zeta_\delta)^{1/2})\subseteq[\lambda_+(z)^{-1/2},\lambda_+(z)^{1/2}]
\] 
and $d_{A_+}((z+\epsilon\zeta_\delta)^{1/2},e)=\frac{1}{2}\log(\lambda_+(z))$ for all $\epsilon >0$ sufficiently small. As $\zeta_\delta\in C^*(z,e)$,  
\[
d_{A_+}((z+\epsilon\zeta_\delta)^{1/2},z)=d_{A_+}(z^{-1}(z+\epsilon\zeta_\delta)^{1/2},e).
\]

Now, define  $\xi\in C(\sigma(z))$ by $\xi(t)=t^{-1}(t+\epsilon f_\delta(t))^{1/2}$. 
Again by the functional calculus we have  $\xi(z)=z^{-1}(z+\epsilon\zeta_\delta)^{1/2}$. Moreover,
\[
\xi(t)=\begin{cases} t^{-1/2}& t\in [\lambda_+(z)^{-1},\lambda_+(z)]\setminus[\lambda-\delta,\lambda+\delta],\\ t^{-1/2}\left(1+\frac{\epsilon f_\delta(t)}{t}\right)^{1/2}& t\in [\lambda-\delta,\lambda+\delta],
\end{cases}
\]
and hence $\xi(t)\leq\max\{\lambda_+(z)^{1/2},(\lambda-\delta)^{-1/2}(1+\frac{\epsilon}{\lambda-\delta})^{1/2}\}$ for all $\lambda_+(z)^{-1}\leq t\leq \lambda_+(z)$. For sufficiently small $\epsilon>0$ we can ensure the inequality $\xi(t)\leq \lambda_+(z)^{1/2}$ on $\sigma(z)$. 
As $\xi(t)\geq t^{-1/2}$ and $\xi(\lambda_+(z)^{-1}) = \lambda_+(z)^{1/2} = \xi(\lambda_+(z))^{-1}$,  
\[
\{\lambda_+(z)^{-1/2},\lambda_+(z)^{1/2}\}\subseteq\sigma(\xi(z))\subseteq[\lambda_+(z)^{-1/2},\lambda_+(z)^{1/2}].
\] 
This implies that $d_{A_+}((z+\epsilon\zeta_\delta)^{1/2},z)=d_{A_+}(\xi(z),e)=\frac{1}{2}\log(\lambda_+(z))$ for all $\epsilon >0$ sufficiently small. Note that  $\zeta_\delta\neq 0$, as $f_\delta\neq 0$, which contradicts the fact that there is a unique geodesic segment connecting $z$ and $e$ by Lemma \ref{lem:2.2}. We conclude that $\sigma(z)=\{\lambda_+(z)^{-1},\lambda_+(z)\}$.

Conversely, if $z\in A_+^\circ$ and $e$ are linearly independent and $\sigma(z)=\{\beta^{-1},\beta\}$ for some $\beta>1$, then the function $f:\sigma(z)\to\{0,1\}$ defined by $f(\beta^{-1})=1$ and $f(\beta)=0$ is continuous, and $\beta^{-1}f+\beta(\mathbf{1}-f)$ is the identity function on $\sigma(z)$. So, for $\pi=\phi_z(f)$, it follows that $\beta^{-1}\pi+\beta(e-\pi)=z$ by the functional calculus $\phi_z: C(\sigma(z))\to C^*(z,e)$. Now consider the  2-dimensional closed cone $A_+\cap\mathrm{span}\{e,z\}$, which we can identify with $\mathbb{R}^2_+$. It follows that 
\[
z-m(z/e)e=z-\beta^{-1}e=(\beta-\beta^{-1})(e-\pi)\in\partial\left(A_+\cap\mathrm{span}\{e,z\}\right)
\] 
and
\[
e-m(e/z)z=e-\beta^{-1}z=(1-\beta^{-2})\pi\in\partial\left(A_+\cap\mathrm{span}\{e,z\}\right).\] 
So, for some $\alpha_1,\alpha_2> 0$ we have that 
\[
e'=\alpha_1(1-\beta^{-2})\pi\quad\mbox{and}\quad z'=\alpha_2(\beta-\beta^{-1})(e-\pi)
\] 
are the endpoints in $\partial\left(A_+\cap\mathrm{span}\{e,z\}\right)$ of the straight line segment through $e$ and $z$. Suppose that there is a $v\in\Re(A)$ and an $\epsilon>0$ such that $e'+tv\in \partial A_+$ and $z'+tv\in \partial A_+$ for $|t|<\epsilon$, or equivalently, there is a $\delta>0$ such that $|t|<\delta$ implies $\pi+tv\in \partial A_+$ and $(e-\pi)+tv\in \partial A_+$. By the Gelfand-Naimark theorem, we can view $A$ as a $C^*$-subalgebra of $B(\mathcal{H})$ for some Hilbert space $\mathcal{H}$. So, let $P\colon\mathcal{H}\to\mathcal{H}$ be the projection representing $\pi$ and $V\colon\mathcal{H}\to\mathcal{H}$ be the operator representing $v$. We now have the identities
\[
P=\left(
                         \begin{array}{cc}
                           0 & 0 \\
                           0 & I_2 \\
                         \end{array}
                       \right),
\quad I-P=\left(
            \begin{array}{cc}
              I_1 & 0 \\
              0 & 0 \\
            \end{array}
          \right)\quad\mbox{and}\quad
          V=\left(
              \begin{array}{cc}
                V_1 & V_2 \\
                V_2^* & V_4 \\
              \end{array}
            \right)
\]
relative to $\mathcal{H}=\ker(P)\oplus\mathrm{ran}(P)$. Since $P-tV\ge 0$, it follows that for each $x_1\oplus x_2\in\ker(P)\oplus\mathrm{ran}(P)$ we have
\[
-t\langle V_1(x_1),x_1\rangle-t\langle V_2(x_2),x_1\rangle-t\langle V_2^*(x_1),x_2\rangle-t\langle V_4(x_2),x_2\rangle+\|x_2\|^2\geq 0
\]
 whenever $|t|<\delta$. If we take $0\neq x_1\in\ker(P)$ and $x_2=0$, then $\langle V_1(x_1),x_1\rangle=0$, and hence $V_1=0$, since $V_1$ is self-adjoint. Similarly, the inequality obtained from $(I-P)-tV\ge 0$ for all $|t|<\delta$ implies that $V_4=0$. Now let $0\neq x_2\in\mathrm{ran}(P)$ and $x_1=\alpha V_2(x_2)$, which is an element of $\ker(P)$ for an arbitrary $\alpha\in\mathbb{R}$. Then our findings yield
\[
-2t\alpha\langle V_2(x_2),V_2(x_2)\rangle +\|x_2\|^2=-2t\alpha\|V_2(x_2)\|^2+\|x_2\|^2\geq 0
\] 
whenever $|t|<\delta$. It follows that  $V_2=0$, and therefore also $V_2^*=0$. We conclude from Theorem \ref{thm:4.3} that the geodesic segment connecting $e$ and $z$ is unique. So, if $x,y\in A_+^\circ$ are linearly independent with $\sigma(x^{-1/2}yx^{-1/2})=\{\beta^{-1},\beta\}$ for some $\beta>1$, we have shown that the geodesic segment connecting $e$ and $x^{-1/2}yx^{-1/2}$ is unique, and hence the geodesic segment connecting $x=\psi_{x^{-1}}(e)$ and $\psi_{x^{-1}}(x^{-1/2}yx^{-1/2})=y$ is unique. 
\end{proof}
We have a similar result for Hilbert's metric geodesic segments in $\Sigma_\varphi^\circ$ for some strictly positive functional $\varphi$ on $A$.  Such a functional exists if $A$ is separable. Indeed, in that case,  the {\em state space}, 
$$\mathcal{S}_A=\{\psi\in A^*: \psi\ge 0\ \mbox{and}\ \|\psi\|=1\}$$ 
is $w^*$-metrizable and therefore, since it is also $w^*$-compact by the Banach-Alaoglu's theorem, we must have that $\mathcal{S}_A$ is separable. For a $w^*$-dense sequence $(\varphi_n)_n$ in $\mathcal{S}_A$ we can define the functional 
\[
\varphi=\sum_{n=1}^\infty 2^{-n}\varphi_n.
\] 
Clearly, this defines a positive functional with $\|\varphi\|=1$. Since we have 
\[
\|x\|=\sup\{\psi(x):\psi\in\mathcal{S}_A\}
\] 
for all $x\geq 0$, it follows that $\varphi$ is strictly positive. For more details, see \cite[\S 5.1]{Con} and \cite[\S 5.15]{Con}. 
\begin{theorem}\label{thm:5.2} Let $A$ be a unital $C^*$-algebra with a strictly positive functional $\varphi$. For distinct $x,y\in\Sigma_\varphi^\circ$, there exists a unique Hilbert's metric geodesic segment connecting $x$ and $y$ in $\Sigma_\varphi^\circ$ if and only if $\sigma(x^{-1/2}yx^{-1/2})=\{\alpha,\beta\}$ for some $\beta>\alpha>0$.
\end{theorem}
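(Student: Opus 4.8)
The plan is to reduce the statement to the Thompson's metric characterization in Theorem~\ref{thm:5.1}, using the correspondence between type I Thompson geodesics and unique Hilbert geodesics supplied by Lemma~\ref{lem:4.4}, after normalising by a suitable positive scalar.

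First I would record that distinct points of $\Sigma_\varphi^\circ$ are automatically linearly independent: if $y=\lambda x$ with $x,y\in\Sigma_\varphi^\circ$, then applying $\varphi$ gives $\lambda=\varphi(y)/\varphi(x)=1$, so $y=x$. Hence, writing $z=x^{-1/2}yx^{-1/2}\in A_+^\circ$, the element $z$ is not a scalar multiple of $e$ and $\sigma(z)$ has at least two points. I would also recall from the computation preceding Theorem~\ref{thm:5.1} that $M(y/x)=\lambda_+(z)$ and $M(x/y)=\lambda_-(z)^{-1}$, and note that the part $P_x$ of $x$ is $A_+^\circ$, so that $\Sigma_\varphi\cap P_x=\Sigma_\varphi^\circ$.

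The crucial step is a scaling normalisation. Set $\mu=(\lambda_+(z)\lambda_-(z))^{-1/2}$ and $\tilde y=\mu y$. Since $M(x/\mu y)=\mu^{-1}M(x/y)$ and $M(\mu y/x)=\mu M(y/x)$, this choice gives $M(x/\tilde y)=M(\tilde y/x)$, placing the linearly independent pair $x,\tilde y$ in the type I situation. Because $\tilde y$ lies on the same ray as $y$, we have $[\tilde y]=\tilde y/\varphi(\tilde y)=y/\varphi(y)=y=[y]$, and as $\delta_{A_+}$ is scale invariant, the straight segment joining $[x]=x$ and $[\tilde y]$ is the same segment joining $x$ and $y$, so its uniqueness as a Hilbert geodesic is the original question. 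Lemma~\ref{lem:4.4}, applied to $x$ and $\tilde y$, then tells us that this segment is a unique Hilbert geodesic in $(\Sigma_\varphi^\circ,\delta_{A_+})$ if and only if the type I Thompson geodesic joining $x$ and $\tilde y$ is unique in $(A_+^\circ,d_{A_+})$.

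Finally I would apply Theorem~\ref{thm:5.1} to the pair $x,\tilde y$: their Thompson geodesic is unique if and only if $\sigma(x^{-1/2}\tilde yx^{-1/2})=\mu\,\sigma(z)=\{\gamma^{-1},\gamma\}$ for some $\gamma>1$. By the choice of $\mu$ the extreme eigenvalues satisfy $\mu\lambda_+(z)=(\mu\lambda_-(z))^{-1}$, so $\mu\,\sigma(z)$ has the symmetric two-point form $\{\gamma^{-1},\gamma\}$ exactly when $\sigma(z)$ consists of precisely the two points $\lambda_-(z)<\lambda_+(z)$; undoing the scaling, this is exactly $\sigma(x^{-1/2}yx^{-1/2})=\{\alpha,\beta\}$ with $\alpha=\lambda_-(z)$ and $\beta=\lambda_+(z)$, and then $\gamma=\sqrt{\beta/\alpha}>1$. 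Chaining these equivalences proves the theorem in both directions. The main obstacle is not a hard estimate, since Theorem~\ref{thm:5.1} and Lemma~\ref{lem:4.4} do the work; rather it is keeping the scaling bookkeeping airtight, namely checking that replacing $y$ by $\tilde y$ preserves the Hilbert geodesic problem while converting the general spectral condition into the symmetric two-point form demanded by Theorem~\ref{thm:5.1}, and confirming that having exactly two spectral points is preserved under the scaling.
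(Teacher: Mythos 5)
Your proposal is correct and follows essentially the same route as the paper: normalise $y$ by the scalar $\mu=(\lambda_+(z)\lambda_-(z))^{-1/2}=M(x/y)^{1/2}M(y/x)^{-1/2}$ to reach the type I situation $M(x/\tilde y)=M(\tilde y/x)$, then combine Lemma~\ref{lem:4.4} with Theorem~\ref{thm:5.1}. Your added remarks (that distinct points of $\Sigma_\varphi^\circ$ are automatically linearly independent, and the explicit bookkeeping showing $\mu\,\sigma(z)$ is of the form $\{\gamma^{-1},\gamma\}$ precisely when $\sigma(z)$ is a two-point set) are details the paper leaves implicit, but the argument is the same.
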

\begin{proof} Suppose the straight line segment connecting $x$ and $y$ is the unique Hilbert's metric geodesic segment  in $\Sigma_\varphi^\circ$. For $\lambda = M(x/y)^{1/2}M(y/x)^{-1/2}$
we have 
\[
M(x/\lambda y)=M(x/y)^{1/2}M(y/x)^{1/2}=M(\lambda y/x). 
\]
So, there exists a unique type I geodesic segment connecting $x$ and $\lambda y$ in $A_+^\circ\cap\mathrm{span}\{x,y\}$. By Lemma \ref{lem:4.4} this geodesic segment is unique in $(A_+^\circ,d_{A_+})$. Now  Theorem  \ref{thm:5.1} implies that 
\[
\sigma(x^{-1/2}(\lambda y)x^{-1/2})=\lambda^{-1}\sigma(x^{-1/2}yx^{-1/2})=
\{\beta^{-1},\beta\}
\] 
for some $\beta>1$, or equivalently, $\sigma(x^{-1/2}yx^{-1/2})=
\{\lambda\beta^{-1},\lambda\beta\}$.

Conversely, if $\sigma(x^{-1/2}yx^{-1/2})=\{\alpha,\beta\}$ for some $\beta>\alpha>0$. Then for $\mu=\sqrt{\alpha\beta}$ and $\xi=\sqrt{\beta/\alpha}$, we can write 
\[
\sigma(x^{-1/2}(\mu^{-1}y)x^{-1/2})=\{\xi^{-1},\xi\}. 
\]
So, Theorem \ref{thm:5.1} implies that the Thompson's metric geodesic segment  connecting $x$ and $\mu^{-1}y$ in $A_+^\circ$ is unique. So, by Lemma \ref{lem:4.4} 
the straight line segment connecting $x$ and $y$ in $\Sigma_\varphi^\circ$ is the 
unique Hilbert's metric geodesic segment.
\end{proof}

\section{Unique  geodesics in symmetric cones}

Recall that the interior $K^\circ$ of a closed cone $K$ in a finite-dimensional inner-product space $(V,\langle\cdot,\cdot\rangle)$ is called a {\em symmetric cone} if $K$  the {\em dual cone}, $K^*=\{y\in V\colon \langle y,x\rangle\geq 0\mbox{ for all }x\in K\}$ satisfies  $K^*=K$, and $\mathrm{Aut}(K)=\{A\in\mathrm{GL}(V)\colon A(K)=K\}$ acts transitively on $K^\circ$. A prime example is the cone of positive definite Hermitian matrices. 
In this section we prove a characterization of the unique Thompson metric geodesics in symmetric cones $K^\circ$, which is similar to the one given in Theorem \ref{thm:5.1}.  

It is well known that the symmetric cones in finite dimensions are precisely the interiors of  the cones of squares of Euclidean Jordan algebras. This fundamental result is due to Koecher \cite{Koe} and Vinberg \cite{Vin}. A detailed exposition of the theory of symmetric cones can be found in the book by Faraut and Kor\'anyi \cite{FK}.  We will follow their notation and terminology. Recall that a {\em Euclidean Jordan algebra} is a finite-dimensional real inner-product space $(V,\langle\cdot,\cdot\rangle)$ equipped with a bilinear product $(x,y)\mapsto x\bullet y$ from $V\times V$ into $V$ such that for each $x,y\in V$:
\begin{enumerate}
\item[(1)] $x\bullet y=y\bullet x$, 
\item[(2)] $x\bullet (x^2\bullet y)=x^2\bullet (x\bullet y)$, and 
\item[(3)] for each $x\in V$, the linear map $L(x)\colon V\to V$ given by $L(x)y =x\bullet y$ 
satisfies 
\[
\langle L(x)y, z\rangle = \langle y, L(x)z\rangle\mbox{\quad for all }y,z\in V. 
\]
\end{enumerate}
In general a Euclidean Jordan algebra is not  associative, but it is  commutative.  
We denote the {\em unit} in a Euclidean Jordan algebra by $e$. 
An element $c\in V$ is called an {\em idempotent}  if $c^2=c$.  
 A set $\{c_1,\ldots,c_k\}$ is called a {\em complete system of orthogonal idempotents} if 
 \begin{enumerate}[(1)]
\item  $c_i^2= c_i$  for all $i$, 
\item $c_i\bullet c_j =0$ for all $i\neq j$, and 
\item $c_1+\cdots+ c_k =e$. 
\end{enumerate}

The spectral theorem \cite[Theorem III.1.1]{FK} says that for each $x\in V$ there exist unique real numbers $\lambda_1, \ldots,\lambda_k$, all distinct, and a complete system of orthogonal idempotents $c_1,\ldots,c_k$ such that 
$x = \lambda_1 c_1+\cdots +\lambda_k c_k$. 
The numbers $\lambda_i$ are called the {\em eigenvalues} of $x$. The {\em spectrum} \index{spectrum} of $x$ is denoted  by 
$\sigma(x) =\{\lambda \colon \lambda \mbox{ eigenvalue of } x\}$,  
and we write 
\[\lambda_+(x)=\max\{\lambda\colon\lambda\in\sigma(x)\} \mbox{\quad and\quad } 
\lambda_-(x)=\min\{\lambda\colon\lambda\in\sigma(x)\}.
\]

It is known, see for example \cite[Theorem III.2.2]{FK}, that $x\in K^\circ$ if and only if $\sigma(x)\subseteq (0,\infty)$, which is equivalent to $L(x)$ being positive definite. So, one can use the spectral decomposition,  $x =\lambda_1 c_1+\cdots +\lambda_kc_k$, of $x\in K^\circ$, to define a spectral calculus, e.g., 
\[
x^{-1/2} =\lambda_1^{-1/2}c_1+\cdots+\lambda_k^{-1/2}c_k.
\]
For $x\in V$   the linear mapping, $P(x) = 2L(x)^2 -L(x^2)$, 
is called the {\em quadratic representation} of $x$. Note that $P(x^{-1/2})x=e$ for all $x\in K^\circ$. It is known that $P(x^{-1}) = P(x)^{-1}$ for all $x\in K^\circ$ and  $P(x)\in\mathrm{Aut}(K)$ whenever $x\in K^\circ$, see \cite[Proposition III.2.2]{FK}.
So, $P(x)$ is an isometry of $(K^\circ,d_K)$ if $x\in K^\circ$ by  \cite[Corollary 2.1.4]{LNBook}. 
For $x,y\in K^\circ$ we write 
\[
\lambda_+(x,y) =\lambda_+(P(y^{-1/2}) x)\mbox{\quad and\quad }
\lambda_-(x,y) =\lambda_-(P(y^{-1/2}) x).
\]
Note that for $x,y\in K^\circ$,  $x\leq \beta y$ if and only if $0\leq \beta e -P(y^{-1/2})x$, and hence 
\[
M(x/y) = \lambda_+(x,y).\]
Similarly, $\alpha y\leq x$ is equivalent with $0\leq P(y^{-1/2})x-\alpha e$, and hence 
\[
M(y/x)^{-1} = m(x/y) = \lambda_-(x,y).
\]
So, for $x,y\in K^\circ$ the Thompson metric distance is given by  
\[
d_K(x,y) = \log\left( \max\{\lambda_+(x,y), \lambda_-(x,y)^{-1}\} \right).
\]

The following lemma is Exercise 3.3 in \cite{FK}. For the sake of completeness, we will give a proof.
\begin{lemma}\label{lem:6.1} Let $V$ be a Euclidean Jordan algebra with symmetric cone $K^\circ$. For $x,y\in K$ we have $\langle x,y\rangle=0$ if and only if $x\bullet y=0$.
\end{lemma}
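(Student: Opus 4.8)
The plan is to prove the two implications separately, the key algebraic input being that the inner product is \emph{associative} with respect to the Jordan product: axiom (3) states that each $L(x)$ is self-adjoint, and combined with commutativity this yields $\langle u\bullet v,w\rangle=\langle u,v\bullet w\rangle$ for all $u,v,w\in V$. The implication $x\bullet y=0\Rightarrow\langle x,y\rangle=0$ is then immediate and needs no positivity: writing $x=e\bullet x$ and moving $e$ across gives $\langle x,y\rangle=\langle e\bullet x,y\rangle=\langle e,x\bullet y\rangle=\langle e,0\rangle=0$.

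For the converse I would exploit self-duality of $K$ (so that $\langle u,v\rangle\ge 0$ whenever $u,v\in K$) to reduce the problem to an idempotent. Using the spectral theorem, write $x=\sum_i\lambda_i c_i$ with the $\lambda_i\ge 0$ distinct and $\{c_i\}$ a complete system of orthogonal idempotents. Since each $c_i=c_i\bullet c_i\in K$ and $y\in K$, every term of $\langle x,y\rangle=\sum_i\lambda_i\langle c_i,y\rangle$ is nonnegative; hence $\langle x,y\rangle=0$ forces $\langle c_i,y\rangle=0$ for every $i$ with $\lambda_i\neq 0$. As $x\bullet y=\sum_i\lambda_i(c_i\bullet y)$, it then suffices to establish the following claim: \emph{if $c$ is an idempotent, $y\in K$ and $\langle c,y\rangle=0$, then $c\bullet y=0$}.

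The proof of this claim is the heart of the matter, and the step I expect to give the most trouble, since one must upgrade the vanishing of a single inner product to the vanishing of the whole off-diagonal interaction of $y$ with $c$. The device I would use is to pass to the square root $b=y^{1/2}\in K$ and combine associativity of the inner product with the Peirce decomposition of $b$ relative to $c$. Writing $b=b_1+b_{1/2}+b_0$ with $b_\alpha\in V_\alpha(c)=\ker(L(c)-\alpha I)$, the Peirce spaces are mutually orthogonal (distinct eigenspaces of the self-adjoint operator $L(c)$, whose eigenvalues lie in $\{0,\tfrac12,1\}$), so that $0=\langle c,y\rangle=\langle c,b\bullet b\rangle=\langle c\bullet b,b\rangle=\|b_1\|^2+\tfrac12\|b_{1/2}\|^2$. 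This pins down $b_1=b_{1/2}=0$, i.e. $b\in V_0(c)$; the Peirce multiplication rule $V_0(c)\bullet V_0(c)\subseteq V_0(c)$ then gives $y=b\bullet b\in V_0(c)$, whence $c\bullet y=0$. Feeding this back through the reduction yields $x\bullet y=0$ and completes the argument. The only facts I would invoke beyond the excerpt are the existence of square roots in $K$, the self-duality $K^*=K$ of the symmetric cone, and the standard Peirce theory (the eigenvalues of $L(c)$ and the multiplication rules), all of which are available in Faraut--Kor\'anyi.
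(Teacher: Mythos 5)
Your proof is correct, but it takes a genuinely different route from the one in the paper. For the hard direction you reduce, via the spectral theorem and self-duality of $K$, to the case of an idempotent $c$ with $\langle c,y\rangle=0$, and then kill the off-diagonal part of $y$ by applying the Peirce decomposition of $b=y^{1/2}$ relative to $c$: the identity $0=\langle c\bullet b,b\rangle=\|b_1\|^2+\tfrac12\|b_{1/2}\|^2$ forces $b\in V_0(c)$, and the rule $V_0(c)\bullet V_0(c)\subseteq V_0(c)$ finishes. The paper instead argues purely operator-theoretically and never decomposes anything: writing $y=v\bullet v$, the associativity $\langle x,v\bullet v\rangle=\langle L(x)v,v\rangle=0$ together with positive semi-definiteness of $L(x)$ (for $x\in K$) gives $L(x)v=0$ via $\|L(x)^{1/2}v\|^2=0$; then the commutation $[L(v),L(v^2)]=0$ (which is exactly Jordan axiom (2)) yields $\langle L(x)y,y\rangle=\langle L(y)(L(x)v),v\rangle=0$, and the same positive semi-definiteness trick applied once more gives $x\bullet y=L(x)y=0$. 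The paper's argument is shorter and needs only the axioms listed in the text plus $L(x)\succeq 0$ for $x\in K$ and the existence of a square root of $y$, whereas yours imports the full Peirce machinery (eigenvalues of $L(c)$ in $\{0,\tfrac12,1\}$, orthogonality of the Peirce spaces, and the multiplication rules) and the spectral characterization of $K$; in exchange your version localizes the obstruction very transparently and in effect establishes the stronger statement that $y$ lies entirely in the Peirce $0$-space of each idempotent supporting $x$. Both proofs are complete and valid.
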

\begin{proof}Without loss of generality, we may assume that $x,y\in K\setminus\{0\}$. Suppose that $\langle x,y\rangle=0$. Write $y=v^2$ for some $v\in V$. It follows that 
\[
\langle x,v\bullet v\rangle =\langle L(v)x,v\rangle =\langle L(x)v,v\rangle =0.
\] 
Since $L(x):V\to V$ is a self-adjoint positive semi-definite linear map, we know that $L(x)^{1/2}$ is well defined, which yields 
\[
\|L(x)^{1/2}v\|^2=\langle L(x)^{1/2}v,L(x)^{1/2}v\rangle =0.
\] 
It follows that $L(x)v=L(x)^{\frac{1}{2}}(L(x)^{\frac{1}{2}}v)=0$. However, $L(y)$ and $L(v)$ commute, so that 
\begin{eqnarray*}
\langle L(x)y,y\rangle & = &\langle L(x)y,L(v)v\rangle \\
                  & = &\langle L(v) (L(y)x),v\rangle \\
                  & = & \langle L(y)(L(v)x),v\rangle \\
                  & = & \langle L(y)(L(x)v),v\rangle =0.
\end{eqnarray*}
Using the same argument as above, we deduce that $L(x)y=x\bullet y=0$.

Obviously, if $x\bullet y=0$, then $\langle x,y\rangle =\langle e,L(x)y\rangle =\langle e,x\bullet y\rangle =0$.
\end{proof}
We can now prove the analogue of Theorem \ref{thm:5.1} for symmetric cones. 
\begin{theorem}\label{thm:4.2} Let $V$ be a Euclidean Jordan algebra with symmetric cone $K^\circ$. If $x,y\in K^\circ$ are linearly independent, then there exists a unique geodesic segment connecting $x$ and $y$ in $(K^\circ,d_K)$ if and only if  $\sigma(P(y^{-\frac{1}{2}})x)=\{\beta^{-1},\beta\}$ for some $\beta>1$.
\end{theorem}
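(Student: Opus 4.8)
The plan is to mirror the proof of Theorem~\ref{thm:5.1}, replacing the congruence $\psi_x$ by the quadratic representation and replacing the $C^*$-functional calculus by the spectral calculus and the Peirce decomposition of the Jordan algebra. Since $P(y^{-1/2})$ is an isometry of $(K^\circ,d_K)$ that is linear and invertible and maps $y$ to $e$ and $x$ to $z:=P(y^{-1/2})x$, the points $x,y$ are linearly independent exactly when $z,e$ are, and $\sigma(P(y^{-1/2})x)=\sigma(z)$. Hence it suffices to show that, for $z,e$ linearly independent, there is a unique geodesic segment connecting $e$ and $z$ in $(K^\circ,d_K)$ if and only if $\sigma(z)=\{\beta^{-1},\beta\}$ for some $\beta>1$. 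Throughout I would use $M(z/e)=\lambda_+(z)$ and $M(e/z)=\lambda_-(z)^{-1}$, which follow from $P(e)=\mathrm{Id}$ and $P(z^{-1/2})e=z^{-1}$ together with the formulas for $M$ recorded above.

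For the forward implication, suppose the geodesic is unique. Corollary~\ref{cor:3.2} forces $M(z/e)=M(e/z)$, that is $\lambda_+(z)=\lambda_-(z)^{-1}=:\beta$; linear independence of $z$ and $e$ rules out $\beta=1$, so $\beta>1$ and $\{\beta^{-1},\beta\}\subseteq\sigma(z)\subseteq[\beta^{-1},\beta]$. The crux is to exclude an interior eigenvalue. Writing the spectral decomposition $z=\sum_i\lambda_i c_i$, suppose some $\lambda_j\in(\beta^{-1},\beta)$, and compare the two candidate midpoints $z^{1/2}$ and $w_\epsilon:=(z+\epsilon c_j)^{1/2}$, computed via the spectral calculus. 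Since $e$, $z$ and $c_j$ all lie in the associative subalgebra spanned by the $c_i$, one has $P(z^{-1/2})a=z^{-1}a$ there, so $d_K(w_\epsilon,z)=d_K(z^{-1}w_\epsilon,e)$ is governed by the function $\xi$ on $\sigma(z)$ with $\xi(\lambda_j)=\lambda_j^{-1}(\lambda_j+\epsilon)^{1/2}$ and $\xi(\lambda_i)=\lambda_i^{-1/2}$ for $i\neq j$, exactly as in the proof of Theorem~\ref{thm:5.1}. A direct spectral estimate then shows that for all sufficiently small $\epsilon>0$ both $d_K(e,w_\epsilon)$ and $d_K(w_\epsilon,z)$ equal $\tfrac12\log\beta=\tfrac12 d_K(e,z)$, while $w_\epsilon\neq z^{1/2}$. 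Thus $z^{1/2}$ and $w_\epsilon$ are two distinct points, each satisfying $d_K(e,\cdot)+d_K(\cdot,z)=d_K(e,z)$ and each at distance $\tfrac12\log\beta$ from $e$; by Lemma~\ref{lem:2.2} both lie on the unique geodesic, hence both coincide with its midpoint, a contradiction. Therefore $\sigma(z)=\{\beta^{-1},\beta\}$.

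For the converse, assume $\sigma(z)=\{\beta^{-1},\beta\}$, so $z=\beta^{-1}c_1+\beta c_2$ with $c_1+c_2=e$. Computing $z-m(z/e)e$ and $e-m(e/z)z$ shows that the endpoints $x',y'$ in $\partial K$ of the straight line through $e$ and $z$ (as in Lemma~\ref{lem:4.2}) are positive multiples of $c_1$ and $c_2$. By Theorem~\ref{thm:4.3} and the remark following it, it then suffices to prove that there is no $v\neq0$ and $\epsilon>0$ with $c_1+tv\in\partial K$ and $c_2+tv\in\partial K$ for all $|t|<\epsilon$. This is the step replacing the block-operator computation of Theorem~\ref{thm:5.1}, and I expect it to be the main obstacle. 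I would decompose $v$ along the Peirce spaces $V=V_1(c_1)\oplus V_{1/2}(c_1)\oplus V_0(c_1)$. Testing $c_1+tv\in K=K^*$ against the cone $K\cap V_0(c_1)$, whose elements $w$ satisfy $\langle c_1,w\rangle=0$ by Lemma~\ref{lem:6.1}, and letting $t$ range over both signs, forces $v\perp V_0(c_1)$; the symmetric argument for $c_2+tv$ forces $v\perp V_1(c_1)$, so $v\in V_{1/2}(c_1)$. Finally, as $c_2\in V_0(c_1)$ is orthogonal to both $c_1$ and $v$, we have $\langle c_1+tv,c_2\rangle=0$ with $c_1+tv,c_2\in K$, so Lemma~\ref{lem:6.1} yields $(c_1+tv)\bullet c_2=0$; but $(c_1+tv)\bullet c_2=\tfrac{t}{2}v$ since $v\in V_{1/2}(c_1)$, whence $v=0$ for any $t\neq0$, a contradiction. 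Thus Theorem~\ref{thm:4.3} gives uniqueness, and transporting back by $P(y^{-1/2})$ completes the proof.
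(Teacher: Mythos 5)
Your proposal is correct and follows essentially the same route as the paper: reduce to the pair $(e,z)$ via the isometry $P(y^{-1/2})$, kill interior eigenvalues by perturbing to $(z+\epsilon c_j)^{1/2}$ and invoking Lemma~\ref{lem:2.2}, and verify the boundary condition of Theorem~\ref{thm:4.3} using Lemma~\ref{lem:6.1}. The only (harmless) difference is in the converse, where you detour through the Peirce decomposition to place $v$ in $V_{1/2}(c_1)$; the paper argues more directly that $\langle v,c_i\rangle=0$ gives $v\bullet c_i=0$ for $i=1,2$ and hence $v=v\bullet e=0$, but both arguments rest on the same orthogonality lemma.
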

\begin{proof} Suppose that there exists a unique geodesic segment  connecting two linearly independent elements $x,y\in K^\circ$. As $y^{-1/2}\in K^\circ$, $P(y^{-1/2})\in\mathrm{Aut}(K)$ (see \cite[Theorem III.2.2]{FK}), and hence $P(y^{-1/2})$ is an isometry of $(K^\circ,d_K)$ by  \cite[Corollary 2.1.4]{LNBook}.
Thus, there exists a unique geodesic segment connecting $x$ and $y$ in $K^\circ$ if and only if there is a unique geodesic connecting  $P(y^{-1/2})y=e$ and $P(y^{-1/2})x$. So, it suffices to show that if there exists a unique geodesic segment connecting $e$ and $z\in K^\circ$ with $e$ and $z$  linearly independent, 
then $\sigma(z)=\{\beta,1/\beta\}$ for some $\beta>1$. 

Let $z=\lambda_1 c_1+\cdots+\lambda_k c_k$ be the spectral decomposition of $z$. We have that 
\[
\lambda_+(z) =M(z/e)\mbox{\quad and\quad }\lambda_-(z) = m(z/e)=M(e/z)^{-1}.
\] 
As there exists a unique geodesic segment connecting $z$ and $e$, we have that 
$M(z/e) =M(e/z)$ by Corollary \ref{cor:3.2}. So, if we write $r=\lambda_+(z)$, then  
\[
\{1/r,r\}\subseteq \sigma(z)\subseteq [1/r,r].
\]
Note that $d_K(e,z)=\log M(z/e)>0$, and hence $r>1$.  

Suppose there exists $\lambda_i\in\sigma(z)$ with $1/r<\lambda_i<r$. For $\epsilon >0$ we define $z_\epsilon =(z+\epsilon c_i)^{1/2}$. If $\lambda_i+\epsilon <r$,  we can use the spectral decomposition of $z$ to find that $d_K(e,z_\epsilon)=\frac{1}{2}\log r$. Also, note that $d_K(z_\epsilon,z)=d_K(P(z^{-1/2})z_\epsilon,e)$ and 
\[
P(z^{-1/2})z_\epsilon=\lambda_1^{-1/2}c_1+\cdots+\frac{(\lambda_i+\epsilon )^{1/2}}{\lambda_i}c_i+\cdots+\lambda_k^{-1/2}c_k.\]
As $0<1/r<\lambda_i<r$, we have that $r^{-1/2}<\lambda_i^{-1/2}<(\lambda_i+\epsilon)^{1/2}/\lambda_i$ and $r\lambda_i^2 -\lambda_i>0$.
So, for $0<\epsilon<r\lambda_i^2-\lambda_i$,  
\[
r^{-1/2}<\frac{(\lambda_i+\epsilon )^{1/2}}{\lambda_i}<r^{1/2}. 
\]
Thus, $d_K(z_\epsilon,z)=\textstyle{\frac{1}{2}}\log r$ and $d_K(e,z_\epsilon)=\frac{1}{2}\log r$ for all $0<\epsilon<\min\{r-\lambda_i, r\lambda_i^2 -\lambda_i\}$.   This  is impossible by Lemma \ref{lem:2.2} and therefore  $\sigma(z)=\{1/r,r\}$.

Conversely, suppose that $z$ and $e$ in $K^\circ$ are linearly independent, and $\sigma(z)=\{1/\beta,\beta\}$ for some $\beta>1$. Then we have the spectral decomposition $z=\beta^{-1}c_1+\beta c_2$. Note that, as $\sigma(z)=\{\beta,1/\beta\}$, $M(z/e)=M(e/z)=\beta >1$. So, the straight line through $e$ and $z$ intersects  $\partial K$ in 2 points $e'$ and $x'$ by Lemma \ref{lem:4.2}. In fact, $(\beta -1/\beta)c_1 =\beta e -z\in\partial K$ and $(\beta^2-1)c_2 =\beta z-e\in\partial K$, so that  $e'=\lambda_1 c_1$ and $z'=\lambda c_2$ for some $\lambda_1,\lambda_2>0$. Suppose there exist $\epsilon >0$ and $v\in V$ such that $c_1+tv\in \partial K$ and $c_2+tv\in \partial K$ whenever $|t|<\epsilon$.  So, for $|t|<\epsilon$ the operator $L(c_1+tv)$ is positive semi-definite, which yields 
\[
0\leq\langle L(c_1+tv)c_2,c_2\rangle=t\langle v\bullet c_2,c_2\rangle =t\langle v,c_2\rangle , 
\] 
and hence $\langle v,c_2\rangle =0$. It follows from Lemma \ref{lem:6.1} that $v\bullet c_2=0$. In a similar way we find that $v\bullet c_1=0$, so $v=v\bullet e=v\bullet(c_1+c_2)=0$ and we conclude from Theorem \ref{thm:4.3} that the geodesic segment connecting $e$ and $z$ in $(K^\circ,d_K)$ is unique. We have shown that if $x,y\in K^\circ$ are linearly independent and $\sigma(P(y^{-1/2})x)=\{\beta,1/\beta\}$ for some $\beta>1$, that there exists a unique geodesic segment connecting $e$ and $P(y^{-1/2})x$ in $(K^\circ,d_K)$. Equivalently, there is a unique geodesic segment connecting $y=P(y^{1/2})e$ and $x=P(y^{1/2})(P(y^{-1/2})x)$. 
\end{proof} 

As for the characterization of unique geodesic segments in $(\Sigma_\varphi^\circ,\delta_K)$ for some strictly positive functional $\varphi$ on $V$, we also have an analogue of Theorem \ref{thm:5.2}. The proof is completely analogous and is left to the reader.  
\begin{theorem}\label{thm:6.3} Let $V$ be a Euclidean Jordan algebra with symmetric cone $K^\circ$,  $\varphi\in K^\circ$, and $\Sigma_\varphi^\circ=\{x\in K^\circ\colon \langle \varphi,x\rangle=1\}$. For distict  $x,y\in\Sigma_\varphi^\circ$  there exists a unique geodesic segment connecting $x$ and $y$ in $(\Sigma_\varphi^\circ,\delta_K)$ if and only if $\sigma(P(y^{-1/2})x)=\{\alpha,\beta\}$ for some $\beta>\alpha>0$.
\end{theorem}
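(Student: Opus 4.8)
The plan is to follow the proof of Theorem~\ref{thm:5.2} essentially verbatim, replacing the role of Theorem~\ref{thm:5.1} by its symmetric-cone analogue Theorem~\ref{thm:4.2} and keeping Lemma~\ref{lem:4.4} and Lemma~\ref{lem:2.2} unchanged. Two preliminary observations make the translation clean. First, since $K^\circ$ is the interior of a closed cone with nonempty interior it is a single part, so $P_x=K^\circ$ and $\Sigma_\varphi\cap P_x=\Sigma_\varphi^\circ$; moreover, because $K$ is self-dual and $\varphi\in K^\circ$, the map $x\mapsto\langle\varphi,x\rangle$ is a strictly positive functional, so Lemma~\ref{lem:4.4} applies with this $\varphi$ and $[u]=u/\langle\varphi,u\rangle$ equals $u$ for $u\in\Sigma_\varphi^\circ$. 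Second, the quadratic representation scales as $P(\kappa a)=\kappa^2 P(a)$, so that $P((\kappa y)^{-1/2})=\kappa^{-1}P(y^{-1/2})$ and hence $\sigma(P((\kappa y)^{-1/2})x)=\kappa^{-1}\sigma(P(y^{-1/2})x)$ for every $\kappa>0$; combined with $M(x/y)=\lambda_+(P(y^{-1/2})x)$ and $M(y/x)^{-1}=\lambda_-(P(y^{-1/2})x)$ this is the only computation that differs in form from the $C^*$-algebra case.

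For the forward implication I would assume the straight line segment from $x$ to $y$ is the unique $\delta_K$-geodesic in $\Sigma_\varphi^\circ$ and set $\lambda=M(x/y)^{1/2}M(y/x)^{-1/2}$. Using the scaling identity one checks $M(x/\lambda y)=\lambda^{-1}M(x/y)=M(x/y)^{1/2}M(y/x)^{1/2}=\lambda M(y/x)=M(\lambda y/x)$, so $x$ and $\lambda y$ satisfy the type~I condition. Since $x,y$ are distinct points of $\Sigma_\varphi^\circ$ they are linearly independent (a dependence $x=cy$ forces $c=1$ after applying $\langle\varphi,\cdot\rangle$), hence so are $x$ and $\lambda y$. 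As $[\lambda y]=[y]$, the hypothesis says the straight segment from $[x]$ to $[\lambda y]$ is the unique Hilbert geodesic, so Lemma~\ref{lem:4.4} promotes this to uniqueness of the type~I geodesic joining $x$ and $\lambda y$ in $(K^\circ,d_K)$. Theorem~\ref{thm:4.2} then yields $\sigma(P((\lambda y)^{-1/2})x)=\{\gamma^{-1},\gamma\}$ for some $\gamma>1$, and undoing the scaling gives $\sigma(P(y^{-1/2})x)=\{\lambda\gamma^{-1},\lambda\gamma\}$, a genuine two-point set since $\gamma>1$.

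For the converse, given $\sigma(P(y^{-1/2})x)=\{\alpha,\beta\}$ with $\beta>\alpha>0$, I would put $\mu=\sqrt{\alpha\beta}$ and $\xi=\sqrt{\beta/\alpha}>1$, so that the scaling identity gives $\sigma(P((\mu y)^{-1/2})x)=\mu^{-1}\{\alpha,\beta\}=\{\xi^{-1},\xi\}$. Because $x$ and $\mu y$ are linearly independent, Theorem~\ref{thm:4.2} provides a unique geodesic joining $x$ and $\mu y$ in $(K^\circ,d_K)$, which is necessarily type~I by Corollary~\ref{cor:3.2}. Lemma~\ref{lem:4.4} then converts this into uniqueness of the straight line segment joining $[x]$ and $[\mu y]=[y]$ in $(\Sigma_\varphi^\circ,\delta_K)$; since $[x]=x$ and $[y]=y$, this is exactly the assertion to be proved.

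I expect no genuine obstacle beyond bookkeeping: the only point requiring care is tracking the factor $\kappa^{-1}$ produced by $P(\kappa a)=\kappa^2 P(a)$ when rescaling $y$, which is what replaces the corresponding scalar computation $\sigma(x^{-1/2}(\kappa y)x^{-1/2})=\kappa\,\sigma(x^{-1/2}yx^{-1/2})$ from the $C^*$-algebra proof; everything else is a direct citation of Lemma~\ref{lem:4.4}, Theorem~\ref{thm:4.2}, and Corollary~\ref{cor:3.2}.
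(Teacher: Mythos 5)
Your proposal is correct and is exactly the argument the paper intends: the paper explicitly leaves this proof to the reader as ``completely analogous'' to Theorem \ref{thm:5.2}, and your adaptation -- substituting Theorem \ref{thm:4.2} for Theorem \ref{thm:5.1} and tracking the scaling $P((\kappa y)^{-1/2})=\kappa^{-1}P(y^{-1/2})$, which correctly flips the rescaling of $y$ from $\mu^{-1}y$ to $\mu y$ relative to the $C^*$-algebra case -- is precisely the required bookkeeping.
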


\section{Quasi-isometric embeddings into normed spaces}
In this section we will study isometric and quasi-isometric embeddings of Thompson's metric spaces $(C^\circ,d_C)$, where $C^\circ$ is the interior of a finite-dimensional closed cone, into finite-dimensional normed spaces. Recall that a map $f$ from a metric space $(X,d_X)$ into a metric space $(Y,d_Y)$ is a called a {\em quasi-isometric embedding} if there exist constants $\alpha\geq 1$ and $\beta\geq 0$ such that 
\[
\frac{1}{\alpha} d_X(x,y) -\beta \leq d_Y(f(x),f(y))\leq \alpha d_X(x,y)+\beta\mbox{\quad for all }x,y\in X.
\]

It is known that if $C$ is a polyhedral cone with $N$ facets, then $(C^\circ,d_C)$ can be isometrically embedded into $(\mathbb{R}^N,\|\cdot\|_\infty)$, see \cite[Lemma 2.2.2]{LNBook}. We will show that polyhedral cones are the only ones that allow a quasi-isometric embedding into a finite-dimensional normed space. 

A similar result  exists for Hilbert's metric spaces and  was proved  by Colbois and Verovic in \cite{CV}. The idea of their proof can be traced back to \cite{FK} and  relies on properties of the Gromov product in Hilbert's metric spaces proved in \cite[Theorem 5.2]{KN}. It turns out that the usual Gromov product does not have the right behavior in Thompson's metric spaces. The following generalized Gromov product, however, will be useful. 
\begin{definition}\label{gen_gromov}
Let $(X,d_X)$ be a metric space. For $p\in X$ and $\eta>0$ the {\em generalized Gromov product} on $X\times X$ is given by 
\[
(x|y)_{p,\eta} = \frac{1}{2}\left ( d_X(x,p)+d_X(y,p) -\eta d_X(x,y)\right).
\]
\end{definition}
Note that for $\eta=1$ we recover the usual Gromov product. It turns out that for  Thompson's metric the generalized Gromov product where $\eta=2$ is  relevant.

The following lemma is a slight generalization of \cite[Proposition 2.1]{CV}.
\begin{lemma}\label{lem:7.1} Let $(X,d_X)$ be  a metric space that  can be quasi-isometrically embedded into a finite-dimensional normed space $(V,\|\cdot\|)$. If there exist $p\in X$, a constant $\eta>0$, and sequences  $(x_k^i)_k$ for $i=1,\ldots,m$  such that $d_X(x_k^i,p)=k$ for all $i=1,\ldots,m$ and $k\geq 1$, and 
\[
\limsup_{k\to\infty}\,(x^i_k| x^j_k)_{p,\eta}\leq C_{ij}<\infty
\]
 for all $i\neq j$, then there exist $v^1,\ldots, v^m\in V$ satisfying 
 \[
 \|v^i-v^j\|\ge \frac{2}{\alpha\eta}\mbox{\quad for all $i\neq j$}
 \] 
 and $1/\alpha\le\|v^i\|\le\alpha$ for all $i$, where $\alpha\geq 1$ is the constant from the quasi-isometry.
\end{lemma}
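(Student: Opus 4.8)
Let me analyze Lemma 7.1. We have a metric space $(X,d_X)$ that quasi-isometrically embeds into $(V,\|\cdot\|)$ via some map $f$ with constants $\alpha\geq 1$ and $\beta\geq 0$. We have a basepoint $p$, a parameter $\eta>0$, and for each $i=1,\dots,m$ a sequence $(x_k^i)_k$ with $d_X(x_k^i,p)=k$, subject to the generalized Gromov product bound $\limsup_k (x_k^i|x_k^j)_{p,\eta}\leq C_{ij}<\infty$ for $i\neq j$. The conclusion asserts the existence of vectors $v^1,\dots,v^m\in V$ that are well-separated ($\|v^i-v^j\|\geq 2/(\alpha\eta)$) and bounded away from $0$ and $\infty$ ($1/\alpha\leq\|v^i\|\leq\alpha$).

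**The plan.**
The natural idea is that $v^i$ should be a limit of the normalized images of the sequence points relative to the image of $p$. Concretely, I would translate so that $f(p)$ is the origin (replace $f$ by $f-f(p)$; this preserves all pairwise distances) and then set $w_k^i = f(x_k^i)/k$. First I would check the boundedness: since $d_X(x_k^i,p)=k$, the quasi-isometry inequalities give $\tfrac{1}{\alpha}k-\beta\leq \|f(x_k^i)-f(p)\|\leq \alpha k+\beta$, so $\|w_k^i\|$ lies in an interval shrinking to $[1/\alpha,\alpha]$ as $k\to\infty$. Because $V$ is finite-dimensional, the sequence $(w_k^i)_k$ lives in a compact annulus for large $k$; passing to a common subsequence (diagonalize over the finitely many indices $i$) I extract limits $v^i=\lim_k w_k^i$. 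The norm bound $1/\alpha\leq\|v^i\|\leq\alpha$ follows immediately by taking limits, with the $\beta$ terms washing out after division by $k$.

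**The separation estimate.**
The heart of the matter is the lower bound $\|v^i-v^j\|\geq 2/(\alpha\eta)$, and this is where the Gromov product hypothesis enters. I would rewrite $\|w_k^i-w_k^j\| = \tfrac{1}{k}\|f(x_k^i)-f(x_k^j)\|$ and apply the lower quasi-isometry bound to get $\|f(x_k^i)-f(x_k^j)\|\geq \tfrac{1}{\alpha}d_X(x_k^i,x_k^j)-\beta$. Now I use that, by definition of the generalized Gromov product and the fact that $d_X(x_k^i,p)=d_X(x_k^j,p)=k$,
\[
\eta\, d_X(x_k^i,x_k^j) = d_X(x_k^i,p)+d_X(x_k^j,p) - 2(x_k^i|x_k^j)_{p,\eta} = 2k - 2(x_k^i|x_k^j)_{p,\eta}.
\]
Hence $d_X(x_k^i,x_k^j) = \tfrac{1}{\eta}\bigl(2k-2(x_k^i|x_k^j)_{p,\eta}\bigr)$. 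Dividing by $k$ and using $\limsup_k (x_k^i|x_k^j)_{p,\eta}\leq C_{ij}<\infty$, the correction term $2(x_k^i|x_k^j)_{p,\eta}/k\to 0$ (along the subsequence, after passing to a further subsequence where the $\limsup$ is realized), so $d_X(x_k^i,x_k^j)/k\to 2/\eta$. Combining, $\|w_k^i-w_k^j\|\geq \tfrac{1}{\alpha}\cdot\tfrac{d_X(x_k^i,x_k^j)}{k}-\tfrac{\beta}{k}$, and taking $k\to\infty$ yields $\|v^i-v^j\|\geq \tfrac{1}{\alpha}\cdot\tfrac{2}{\eta} = \tfrac{2}{\alpha\eta}$, as required.

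**Anticipated obstacle.**
The only genuinely delicate point is the interplay between the $\limsup$ hypothesis and the subsequence extraction. The boundedness/compactness argument forces me to pass to a subsequence to obtain the limits $v^i$, but the hypothesis only controls the $\limsup$ of the Gromov product, not the full limit. I would handle this by first extracting a subsequence along which $(x_k^i|x_k^j)_{p,\eta}$ converges (or at least stays below $C_{ij}+1$ for all large $k$) for every pair $i\neq j$ simultaneously — finitely many pairs, so a finite diagonal argument suffices — and only then extract the convergent subsequence for the vectors $w_k^i$. Along this common subsequence the term $2(x_k^i|x_k^j)_{p,\eta}/k$ is bounded and hence vanishes after division by $k$, so the estimate goes through. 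This bookkeeping is routine but must be done in the right order; everything else is a direct application of the two quasi-isometry inequalities and finite-dimensional compactness.
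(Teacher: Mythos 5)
Your proposal is correct and follows essentially the same route as the paper: translate so $f(p)=0$, rescale by $1/k$ to get $u_k^i=f(x_k^i)/k$, derive $d_X(x_k^i,x_k^j)\geq(2k-R)/\eta$ from the eventual upper bound on the Gromov product, apply the two quasi-isometry inequalities, and extract limits by finite-dimensional compactness. One small imprecision: since the Gromov product is not assumed bounded below, you only get $\liminf_k d_X(x_k^i,x_k^j)/k\geq 2/\eta$ rather than convergence to $2/\eta$ (so the subsequence bookkeeping in your last paragraph is unnecessary), but this one-sided bound is exactly what your final estimate uses, so the conclusion stands.
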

\begin{proof}
Let $f\colon X\to V$ be a quasi-isometric embedding. We may as well assume that  $f(0)=0$, as the map $g(x)=f(x)-f(p )$ is also a quasi-isometric embedding. Now for $ i\neq j$ there exists a number $N\geq 1$ and a constant $R<\infty $ such that 
\[
d_X(x^i_k,x^j_k)\geq\frac{d_X(x^i_k,p)+d_X(x^j_k,p)-R}{\eta}=\frac{2k-R}{\eta}
\] 
whenever $k\geq N$. Define the vectors $u^i_k=\frac{1}{k}f(x^i_k)\in V$ for all $k\geq 1$ and $i=1,\ldots,m$. It follows that 
\[
\|u_k^i-u_k^j\|=\frac{1}{k}\| f(x^i_k)-f(x^j_k)\|\ge \frac{1}{\alpha k}d_X(x^i_k, x^j_k)-
\frac{\beta}{k}\geq\frac{2}{\alpha \eta}- \frac{1}{k}\left( \frac{R}{\alpha\eta}+\beta\right)
\] 
whenever $k\geq 1$ and $i\neq j$. Also, we have that 
\[
\frac{1}{\alpha}-\frac{\beta}{k}\le\|u_k^i\|=\frac{1}{k}\|f(x^i_k)-f(p)\|\leq\alpha+\frac{\beta}{k}\]
for all $k\geq 1$ and $1\leq i\leq m$. Since $V$ is finite-dimensional, there are convergent subsequences $(u^i_{k_j})_j$ with limits $v^i$ for $i=1,\ldots, m$. 
The vectors $v^i$ have the desired properties. 
\end{proof}

Lemma \ref{lem:7.1} has the following consequence. 
\begin{corollary}\label{cor:7.2} 
If $(X,d_X)$ is a metric space and there exist  $p\in X$, a constant $\eta>0$, and  sequences  $(x_k^i)_k$  in $X$ for $i=1,2,\ldots$ such that $d(x_k^i,p)=k$ for all $i\geq 1$ and $k\geq 1$, and 
\[
\limsup_{k\to\infty}\,(x^i_k| x^j_k)_{p,\eta}\leq C_{ij}<\infty
\]
 for all $i\neq j$, then $(X,d_X)$ cannot be quasi-isometrically embedded into a finite-dimensional normed space. 
\end{corollary}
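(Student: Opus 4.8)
The plan is to argue by contradiction, using Lemma \ref{lem:7.1} as the engine. Suppose $(X,d_X)$ did admit a quasi-isometric embedding $f\colon X\to(V,\|\cdot\|)$ into a finite-dimensional normed space, with constants $\alpha\geq 1$ and $\beta\geq 0$. The hypotheses of the corollary are precisely that, for \emph{every} $m\geq 1$, the finite subcollection $(x^1_k)_k,\ldots,(x^m_k)_k$ satisfies the assumptions of Lemma \ref{lem:7.1}: each sequence has $d_X(x^i_k,p)=k$, and the generalized Gromov products $(x^i_k\,|\,x^j_k)_{p,\eta}$ stay bounded above for $i\neq j$. Hence, for each fixed $m$, Lemma \ref{lem:7.1} furnishes vectors $v^1,\ldots,v^m\in V$ with
\[
\|v^i-v^j\|\geq\frac{2}{\alpha\eta}\quad\text{for all }i\neq j,\qquad\frac{1}{\alpha}\leq\|v^i\|\leq\alpha\text{ for all }i.
\]

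The crucial observation I would stress is that neither the separation constant $\frac{2}{\alpha\eta}$ nor the radial bounds $\frac{1}{\alpha},\alpha$ depend on $m$; they are determined once and for all by the quasi-isometry constant $\alpha$ and the given $\eta$. Consequently every such vector lies in the fixed closed annulus
\[
A=\Big\{v\in V\colon \tfrac{1}{\alpha}\leq\|v\|\leq\alpha\Big\},
\]
which is a bounded, hence totally bounded, subset of the finite-dimensional space $V$. Total boundedness lets me cover $A$ by finitely many open balls of radius $\frac{1}{\alpha\eta}$; any two points of $A$ lying in a common such ball are at distance strictly less than $\frac{2}{\alpha\eta}$, so two points separated by at least $\frac{2}{\alpha\eta}$ must occupy distinct balls. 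This yields an absolute bound $M$ (the number of covering balls) on how many pairwise $\frac{2}{\alpha\eta}$-separated points $A$ can contain.

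To finish, I would simply invoke Lemma \ref{lem:7.1} once more, this time with $m=M+1$. It produces $M+1$ vectors lying in $A$ and pairwise separated by at least $\frac{2}{\alpha\eta}$, in direct contradiction with the packing bound $M$. Therefore no quasi-isometric embedding of $(X,d_X)$ into a finite-dimensional normed space can exist. I do not expect a genuine obstacle here: the only point needing care is the uniformity just highlighted—since the constants delivered by Lemma \ref{lem:7.1} are independent of $m$, increasing $m$ forces arbitrarily large $\frac{2}{\alpha\eta}$-separated configurations into the fixed compact annulus $A$, which is exactly what finite dimensionality of $V$ rules out.
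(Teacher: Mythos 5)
Your argument is correct and is essentially the paper's own proof: both fix the quasi-isometry constants, note that Lemma \ref{lem:7.1} places arbitrarily many pairwise $\frac{2}{\alpha\eta}$-separated points in the compact annulus $\{v\colon \frac{1}{\alpha}\leq\|v\|\leq\alpha\}$, and derive a contradiction from the finite packing bound that compactness imposes. The only difference is that you spell out the uniformity of the constants in $m$ and the covering argument, which the paper leaves implicit.
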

\begin{proof}
Suppose that $(X,d_X)$ can be quasi-isometrically embedded into a finite-dimensional normed space $(V,\|\cdot\|)$ using a quasi-isometry with constants $\alpha\geq 1$ and $\beta \geq 0$. As the set $S=\{x\in V\colon 1/\alpha\leq  \|x\|\leq \alpha\}$ is compact,  the maximum number of points in $S$ whose pairwise distance is at least $2/(\alpha\eta)$ is bounded by a constant $M_{\alpha,\eta}<\infty$. This contradicts  Lemma \ref{lem:7.1}.
\end{proof}

We will see that if $C$ is not a polyhedral cone, then we can find infinitely many sequences $(x^i_k)_k$ in $(C^\circ,d_C)$ satisfying the conditions in Corollary 
\ref{cor:7.2} with $\eta=2$.  We will need the following auxiliary lemma. 
\begin{lemma}\label{lem:7.4} 
Let $C$ be a closed  cone with nonempty interior  in a finite-dimensional normed space $(V,\|\cdot\|)$. If $S\subseteq C ^\circ$ is a norm compact subset and $(x_k)_k$ is a sequence in $C^\circ$ such that $x_k\to x\in\partial C$, then there exists $N\geq 1$ such that $d_C(x_k,s)=\log M(s/x_k)$ for all $s\in S$ and all $k\ge N$.
\end{lemma}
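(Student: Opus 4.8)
The plan is to fix an interior point $e\in C^\circ$, reduce every comparison to $e$, and show that $M(s/x_k)$ eventually dominates $M(x_k/s)$, uniformly in $s\in S$. Since $d_C(x_k,s)=\log\max\{M(x_k/s),M(s/x_k)\}$ and $x_k,s\in C^\circ$ lie in the same part (so the distance is finite), it suffices to produce an $N$ such that $M(s/x_k)\geq M(x_k/s)$ for all $s\in S$ and all $k\geq N$; for such $k$ the maximum is attained by $M(s/x_k)$ and the claimed identity follows.

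First I would exploit that $S$ is a norm compact subset of the open cone $C^\circ$. The map $s\mapsto m(s/e)=M(e/s)^{-1}$ is continuous and strictly positive on $C^\circ$, so $a:=\min_{s\in S}m(s/e)>0$, and by closedness of $C$ this gives the uniform comparison $ae\leq_C s$ for every $s\in S$. From $ae\leq_C s$ one reads off the two monotonicity estimates
\[
M(x_k/s)\leq M\bigl(x_k/(ae)\bigr)=\tfrac{1}{a}M(x_k/e)\qquad\text{and}\qquad M(s/x_k)\geq M\bigl(ae/x_k\bigr)=a\,M(e/x_k),
\]
valid for all $s\in S$ and all $k$. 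Thus both quantities of interest are controlled, uniformly in $s$, by the single comparisons $M(x_k/e)$ and $M(e/x_k)$.

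Next I would bound $M(x_k/e)$ above and $M(e/x_k)$ below. Since $e\in C^\circ$ is an order unit, the gauge $v\mapsto M(v/e)=\inf\{\beta:\beta e-v\in C\}$ is finite and continuous on $V$, so $M(x_k/e)\to M(x/e)<\infty$; hence $\sup_{s\in S}M(x_k/s)\leq\tfrac{1}{a}M(x_k/e)$ stays bounded. The crux is to prove $M(e/x_k)\to\infty$. I would argue by contradiction: if $M(e/x_{k_j})\leq B$ along a subsequence, set $\beta_j=M(e/x_{k_j})$ so that $\beta_j x_{k_j}-e\in C$ by closedness of $C$. Passing to a further subsequence, $\beta_j\to\beta^*\in[0,B]$ and $\beta^* x-e=\lim_j(\beta_j x_{k_j}-e)\in C$. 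Here $\beta^*>0$, for otherwise $-e\in C$ while $e\in C^\circ\setminus\{0\}$, contradicting $C\cap(-C)=\{0\}$. But then $\beta^* x=(\beta^* x-e)+e\in C+C^\circ\subseteq C^\circ$, forcing $x\in C^\circ$ and contradicting $x\in\partial C$.

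Combining these facts, $\inf_{s\in S}M(s/x_k)\geq a\,M(e/x_k)\to\infty$ while $\sup_{s\in S}M(x_k/s)$ remains bounded, so there is an $N\geq 1$ with $M(s/x_k)>M(x_k/s)$ for all $s\in S$ and $k\geq N$, which yields $d_C(x_k,s)=\log M(s/x_k)$. I expect the main obstacle to be the limit $M(e/x_k)\to\infty$: this is exactly where the closedness of $C$ together with the hypothesis $x\in\partial C$ (rather than $x\in C^\circ$) must be combined, via the compactness/contradiction argument above. The uniformity over $S$ is then a routine consequence of the single uniform comparison $ae\leq_C s$ extracted from the compactness of $S$.
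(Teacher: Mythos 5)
Your proof is correct, and it follows the same overall strategy as the paper's: bound $\sup_{s\in S}M(x_k/s)$ uniformly by a constant and show $\inf_{s\in S}M(s/x_k)\to\infty$, so that the maximum in the definition of $d_C$ is eventually attained at $M(s/x_k)$. The difference lies in the implementation. The paper works dually: it takes the compact base $\Sigma_u^*$ of $C^*$, writes $M(x/y)=\max_{\varphi\in\Sigma_u^*}\varphi(x)/\varphi(y)$, gets uniformity over $S$ from $\delta=\min_{s\in S}\min_{\varphi\in\Sigma_u^*}\varphi(s)>0$, and obtains the blow-up from an explicit witness $\rho\in\Sigma_u^*$ with $\rho(x)=0$ (which exists precisely because $x\in\partial C$), so that $M(s/x_k)\geq\delta/\rho(x_k)\to\infty$ with a visible rate. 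You instead work primally: uniformity over $S$ comes from the single order comparison $ae\leq_C s$ with $a=\min_{s\in S}m(s/e)>0$, and the divergence $M(e/x_k)\to\infty$ is extracted by a sequential compactness/contradiction argument ($\beta_j x_{k_j}-e\in C$ passes to $\beta^*x-e\in C$, forcing $x\in C^\circ$). Your route avoids the dual cone entirely and is slightly more elementary, at the cost of being purely qualitative; the paper's dual argument is shorter once the representation of $M$ via $\Sigma_u^*$ is in hand and isolates exactly which supporting functional of $x$ drives the divergence. Both arguments use the same three ingredients — compactness of $S$, closedness of $C$, and $x\in\partial C$ — so they are genuinely equivalent in content.
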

\begin{proof}
Let $u\in C^\circ$ and $\Sigma_u^*=\{\varphi \in C^*\colon \varphi(u)=1\}$. As $C^*$ is a closed cone with nonempty interior in $V^*$, we know from \cite[Lemma 1.2.4]{LNBook} that $\Sigma_u^*$ is a compact set of $V^*$, and hence there exists a constant $M_1>0$ such that $\|\varphi\|\leq M_1$ for all $\varphi\in\Sigma^*_u$. 
Define functions $f\colon C^\circ\to\mathbb{R}$ and $g\colon C^\circ\to\mathbb{R}$  by 
\[f(x)=\min_{\varphi\in\Sigma_u^*}\varphi(x)\quad\mbox{and}\quad g(x)=\max_{\varphi\in\Sigma_u^*}\varphi(x)\mbox{\quad 
for $x\in C^\circ$.}
\] 

The topology on $C^\circ$ generated by $d_C$ is the same as the norm topology by \cite[Corollary 2.5.6]{LNBook}.  
Note that there exists a constant $M_2>0$ such that $\|s\|\leq M_2$ for all $s\in S$, as $S$ is compact. Thus, $g(s)\leq \max_{\varphi\in\Sigma_u^*} \|\varphi\|\|s\|\leq M_1M_2$ for all $s\in S$. 
Also, if $|f(x)-f(y)| = f(x)-f(y)$ and $f(y) = \psi(y)$ with $\psi\in\Sigma_u^*$, then 
$|f(x)-f(y)| = f(x)-f(y)\leq \psi(x)-\psi(y)\leq \|\psi\|\|x-y\|\leq M_1\|x-y\|$.  Thus, $f$ is a continuous function, and hence $\delta=\min_{s\in S}f(s)>0$. 

For $x_k\in C^\circ$ we know, by \cite[Lemma 1.2.1]{LNBook}, that 
\[
\sup_{s\in S}M(x_k/s)=\sup_{s\in S}\left(\max_{\varphi\in\Sigma_u^*}\frac{\varphi(x_k)}{\varphi(s)}\right)\leq\frac{g(x_k)}{\delta}\leq\frac{M_1M_2}{\delta}.
\]
As $x\in\partial C$, there exists $\rho\in\Sigma_u^*$ such that $\rho(x)=0$. This implies that there exists $N\geq 1$ such that 
$\delta/\rho(x_k)>M_1M_2/\delta$ for all $k\geq N$, and hence 
\[
M(s/x_k)= \max_{\varphi\in\Sigma_u^*}\frac{\varphi(s)}{\varphi(x_k)} \geq \max_{\varphi\in\Sigma_u^*}\ \frac{ f(s)}{\phi(x_k)}\geq \frac{ \delta}{\rho(x_k)}> M(x_k/s)
\] for all $s\in S$ and $k\geq N$. Thus,  $d_C(x_k,s)=\log M(s/x_k)$ for all $s\in S$ whenever $k\geq N$.
\end{proof}

The following result is the analogue of \cite[Theorem 5.2]{KN} for Thompson's metric. 
\begin{proposition}\label{prop:7.5} Let $C$ be a closed cone with nonempty interior  in a finite-dimensional normed space $(V,\|\cdot\|)$, $\varphi\in C^*$ strictly positive, and $\Sigma_\varphi^\circ=\{x\in C^\circ\colon\varphi(x)=1\}$. Suppose that  $(x_n)_n$ and $(y_n)_n$ are convergent sequences in $\Sigma_\varphi^\circ$ with $x_n\to x\in\partial C$ and $y_n\to y\in\partial C$. If $tx+(1-t)y\in C^\circ$ for all $0<t<1$ and $p\in C^\circ$, then 
\[
\limsup_{k\to\infty}\,(x_k|y_k)_{p,2}<\infty.
\]
\end{proposition}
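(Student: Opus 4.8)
The plan is to compare $x_k$ and $y_k$ not to $p$ directly but to the \emph{midpoint} $w_k=\tfrac12(x_k+y_k)$, which the segment hypothesis forces to converge to an interior point; this keeps $d_C(w_k,p)$ bounded while the ``wedge'' between $x_k$ and $y_k$ can be controlled by crude domination estimates. Concretely, I would set $w_k=\tfrac12(x_k+y_k)\in C^\circ$ and $w=\tfrac12(x+y)$. Taking the segment hypothesis at $t=\tfrac12$ gives $w\in C^\circ$, and since $w_k\to w$ in norm while the Thompson topology and the norm topology agree on $C^\circ$ by \cite[Corollary 2.5.6]{LNBook}, we get $d_C(w_k,p)\to d_C(w,p)<\infty$; in particular $d_C(w_k,p)$ is bounded. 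Applying the triangle inequality to each term, $d_C(x_k,p)+d_C(y_k,p)\leq d_C(x_k,w_k)+d_C(y_k,w_k)+2d_C(w_k,p)$, which reduces the problem to bounding a single quantity:
\[
(x_k|y_k)_{p,2}\leq\tfrac12\big(d_C(x_k,w_k)+d_C(y_k,w_k)-2d_C(x_k,y_k)\big)+d_C(w_k,p).
\]

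Next I would estimate the two midpoint distances. Writing $a=M(x_k/y_k)$ and $b=M(y_k/x_k)$, closedness of $C$ gives $y_k\leq_C b\,x_k$, so $w_k=\tfrac12(x_k+y_k)\leq_C\tfrac{1+b}{2}\,x_k$ and hence $M(w_k/x_k)\leq\tfrac{1+b}{2}$; meanwhile $x_k\leq_C x_k+y_k=2w_k$ gives $M(x_k/w_k)\leq 2$. Thus $d_C(x_k,w_k)\leq\log(1+b)$ whenever $b\geq1$, and symmetrically $d_C(y_k,w_k)\leq\log(1+a)$ whenever $a\geq1$. Since $d_C(x_k,y_k)=\log\max\{a,b\}$, combining these yields
\[
d_C(x_k,w_k)+d_C(y_k,w_k)-2d_C(x_k,y_k)\leq\log\frac{(1+a)(1+b)}{\max\{a,b\}^2}\leq\log 4,
\]
the last step being the elementary bound $(1+a)(1+b)\leq 4\max\{a,b\}^2$ valid for $a,b\geq1$.

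It then remains to check that $a$ and $b$ eventually exceed $1$; I would in fact show $a,b\to\infty$, and this is exactly where the hypothesis that the open segment $[x,y]$ lies in $C^\circ$ enters. Since $x\in\partial C$, choose a supporting functional $\rho\in C^*\setminus\{0\}$ with $\rho(x)=0$. If we also had $\rho(y)=0$, then $\rho(w)=0$ would contradict $w\in C^\circ$ (every nonzero element of $C^*$ is strictly positive on interior points), so $\rho(y)>0$. From $y_k\leq_C b\,x_k$ we get $\rho(y_k)\leq b\,\rho(x_k)$, whence $b\geq\rho(y_k)/\rho(x_k)\to\rho(y)/\rho(x)=+\infty$, because $\rho(x_k)\to\rho(x)=0$ with $\rho(x_k)>0$. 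A symmetric argument using a supporting functional at $y$ gives $a\to\infty$. Putting all the pieces together, $\limsup_{k\to\infty}(x_k|y_k)_{p,2}\leq\log 2+d_C(w,p)<\infty$.

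The main obstacle is the step forcing $a,b\to\infty$: the triangle-inequality reduction and the one-variable estimates are routine, but the whole argument collapses unless the midpoint $w$ is genuinely interior, and the cleanest way I see to convert the segment hypothesis into quantitative divergence of $M(x_k/y_k)$ and $M(y_k/x_k)$ is the observation that a supporting functional vanishing at one endpoint of $[x,y]$ must be strictly positive at the other. I expect the only real care to be needed in verifying that the domination inequalities for $M(w_k/x_k)$ and $M(x_k/w_k)$ hold exactly as stated (using that $C$ is closed so the defining infima are attained), after which the bound is immediate.
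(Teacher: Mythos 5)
Your proof is correct, and while it pivots on the same midpoint $w_k=\tfrac12(x_k+y_k)$ as the paper's argument, the mechanism is genuinely different. The paper proves the exact inequalities $d_C(x_k,y_k)\geq d_C(x_k,z_k)$ and $d_C(x_k,y_k)\geq d_C(z_k,y_k)$ by combining Lemma \ref{lem:7.4} (which guarantees $d_C(x_k,s)=\log M(s/x_k)$ for $s$ in a fixed ball around the limit midpoint once $k$ is large) with the cross-ratio representation of $M(x_k/y_k)$ along the chord through $x_k$ and $y_k$; a single triangle inequality then yields $\limsup_k (x_k|y_k)_{p,2}\leq d_C(z,p)$. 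You instead bound $d_C(x_k,w_k)$ and $d_C(y_k,w_k)$ from above by the elementary domination estimates $M(x_k/w_k)\leq 2$ and $M(w_k/x_k)\leq\tfrac{1+b}{2}$, which gives the slightly weaker conclusion $\limsup_k(x_k|y_k)_{p,2}\leq \log 2+d_C(w,p)$ but dispenses with Lemma \ref{lem:7.4} and the cross-ratio identities entirely. The price is the extra step showing $M(x_k/y_k),M(y_k/x_k)\to\infty$, which you handle correctly via supporting functionals: a functional vanishing at $x$ cannot vanish at $y$ because the midpoint is interior, and this is precisely where the segment hypothesis enters your argument (in the paper it enters only through $z\in C^\circ$). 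All the individual steps check out --- the attainment $y_k\leq_C M(y_k/x_k)x_k$ from closedness, the bound $(1+a)(1+b)\leq 4\max\{a,b\}^2$ for $a,b\geq 1$, and the continuity of $d_C(\cdot,p)$ at $w$ from the coincidence of the two topologies --- so your version is a valid, and arguably more self-contained, alternative; since the proposition is only used to feed Corollary \ref{cor:7.2}, the additive constant $\log 2$ is harmless.
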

\begin{proof} 
For $k\geq 1$ let $z_k=\frac{1}{2}(x_k+y_k)$ and $z=\frac{1}{2}(x+y)\in C^\circ$. Note that $z_k\to z$ as $k\to\infty$. Let $\epsilon>0$ be such that the closed norm ball, $B_\epsilon$, with radius $\epsilon$ and center $z$ is contained in $C^\circ$. There exists a number $M\geq 1$ such that $z_k\in B_\epsilon$ for all  $k\geq M$. By Lemma \ref{lem:7.4} there also exists a number $N\geq M$ such that $d_C(x_k,s)=\log M(s/x_k)$ and $d_C(y_k,s)=\log M(s/y_k)$ for all $k\geq N$ and all $s\in B_\epsilon$. 
Let $x'_k$ and $y'_k$ be the points of intersection of the straight line through $x_k$ and $y_k$ with $\partial C$ such that $x_k$ is between $x'_k$ and $y_k$, and $y_k$ is between $y'_k$ and $x_k$. 
As was shown in the proof of Theorem \ref{thm:4.3}, we have that 
\[
\log M(x_k/y_k)=\log\frac{\|x_k-y'_k\|}{\|y_k-y'_k\|}\quad\mbox{and}\quad\log M(y_k/x_k)=\log\frac{\|y_k-x'_k\|}{\|x_k-x'_k\|}.
\]
It follows that
\begin{eqnarray*} 
d_C(x_k,y_k) \geq   \log M(x_k/y_k)
&  = &  \log \frac{\|x_k-y'_k\|}{\|y_k-y'_k\|}\\
  & \geq &\log\frac{\|z_k-y'_k\|}{\|y_k-y'_k\|}
  = \log M(z_k/y_k)
 =  d_C(z_k,y_k).
\end{eqnarray*} 
Similarly, $d_C(x_k,y_k)\ge d_C(x_k,z_k)$. As the norm topology coincides with the Thompson's metric topology on $C^\circ$, these inequalities, finally imply that 
\begin{eqnarray*}\limsup_{k\to\infty}2(x_k|y_k)_{p,2}&\leq & \limsup_{k\to\infty}d_C(x_k,p)-d_C(x_k,z_k)+d_C(y_k,p)-d_C(y_k,z_k)\\
&\leq& \limsup_{k\to\infty}\,2d_C(z_k,p)\\
 & \leq &2d_C(z,p).
\end{eqnarray*}
\end{proof}

Recall that if $C$ is a closed polyhedral cone with nonempty interior in a finite-dimensional vector space $V$, the dual cone is also a polyhedral cone. Indeed, as $C^{**} = C$ whenever $C$ is a closed  finite-dimensional cone with nonempty interior, we know that if $C$ is  non-polyhedral, then $C^*$ is also non-polyhedral, see \cite[Corollary 19.2.2]{Rock}. 
The following notions play a role in the  proof of the next result.   
A face $F$ of a closed cone $C$ is called an {\em extreme ray}  if $\dim F=1$. An extreme ray $F$ of  $C$ is said to be an {\em exposed ray} if there exists $\varphi\in C^*$ such that 
$F=\{x\in C\colon \varphi(x)=0\}$. The cone version of Strazewicz's theorem \cite[p.167]{Rock} says that in a finite-dimensional  closed cone $C$ the exposed rays are dense in the extreme rays, i.e.,  the norm closure of $\{x\in C\colon x\mbox{ on an exposed ray of } C\}$ coincides with the norm closure of $\{x\in C\colon x\mbox{ on an extreme ray of } C\}$.

\begin{theorem}\label{thm:7.6} If $C$ is a closed finite-dimensional cone with nonempty interior,  then $(C^\circ,d_C)$ can be quasi-isometrically embedded into a finite-dimensional normed space if and only if $C$ is a polyhedral cone. 
\end{theorem}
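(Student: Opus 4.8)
The plan is to prove both implications, with the direction \textbf{polyhedral $\Rightarrow$ embeddable} being immediate and the reverse handled by contraposition. If $C$ is polyhedral with $N$ facets, then by \cite[Lemma 2.2.2]{LNBook} the space $(C^\circ,d_C)$ embeds isometrically into $(\mathbb{R}^N,\|\cdot\|_\infty)$, and an isometric embedding is a quasi-isometric embedding (with $\alpha=1$ and $\beta=0$). So it remains to show that if $C$ is not polyhedral, then $(C^\circ,d_C)$ admits no quasi-isometric embedding into a finite-dimensional normed space. For this I would verify the hypotheses of Corollary \ref{cor:7.2} with $\eta=2$: I must produce a base point $p\in C^\circ$ and infinitely many sequences $(x^i_k)_k$ in $C^\circ$ with $d_C(x^i_k,p)=k$ for all $i,k$ and $\limsup_{k\to\infty}(x^i_k|x^j_k)_{p,2}<\infty$ for all $i\neq j$.

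The heart of the argument is the construction of suitable boundary points. Fix a strictly positive $\varphi\in C^*$ and work in the compact convex slice $\Sigma_\varphi$. Since $C$ has nonempty interior and is non-polyhedral, the identity $C^{**}=C$ together with \cite[Corollary 19.2.2]{Rock} shows that $C^*$ is non-polyhedral, so $C^*$ has infinitely many extreme rays; by Straszewicz's theorem \cite[p.167]{Rock} the exposed rays are dense in the extreme rays, hence $C^*$ has infinitely many distinct exposed rays $\mathbb{R}_+\varphi_i$, $i=1,2,\ldots$. For each $i$ set $F_i=\{x\in C\colon\varphi_i(x)=0\}$, the exposed face of $C$ determined by $\varphi_i$, and pick $x^i$ in the relative interior of $F_i$, normalized so that $x^i\in\Sigma_\varphi\cap\partial C$. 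The key claim is that for $i\neq j$ the open segment $(x^i,x^j)$ lies in $C^\circ$. To see this, observe that if $\psi\in C^*$ vanishes at $x^i$, then, as a nonnegative functional vanishing at a relative interior point of $F_i$, it vanishes on all of $F_i$; and since $\mathbb{R}_+\varphi_i$ is itself an exposed ray of $C^*$, the duality between exposed faces of $C$ and $C^*$ forces the set of functionals in $C^*$ vanishing on $F_i$ to reduce to $\mathbb{R}_+\varphi_i$. Thus the only $\psi\in C^*$ vanishing at $x^i$ are nonnegative multiples of $\varphi_i$, and similarly for $x^j$, so no nonzero $\psi\in C^*$ vanishes at both $x^i$ and $x^j$. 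Since $C^\circ=\{z\colon\psi(z)>0\text{ for all }\psi\in C^*\setminus\{0\}\}$, the midpoint, and hence the whole open segment, lies in $C^\circ$.

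With these points in hand the remaining steps are routine. Choosing $p\in\Sigma_\varphi^\circ$ and moving along the straight segment from $p$ to $x^i$, the function $t\mapsto d_C(\cdot,p)$ is continuous, equals $0$ at $p$, and tends to $\infty$ as the segment approaches $x^i\in\partial C$ (because $m(\cdot/p)\to 0$ there, so $M(p/\cdot)\to\infty$); by the intermediate value theorem I can select $x^i_k\in\Sigma_\varphi^\circ$ on this segment with $d_C(x^i_k,p)=k$, and these satisfy $x^i_k\to x^i$. Proposition \ref{prop:7.5}, applied to the pair $(x^i_k)_k,(x^j_k)_k$ together with the fact that $(x^i,x^j)\subseteq C^\circ$, then yields $\limsup_{k\to\infty}(x^i_k|x^j_k)_{p,2}<\infty$ for each $i\neq j$. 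Corollary \ref{cor:7.2} shows that $(C^\circ,d_C)$ is not quasi-isometrically embeddable, which completes the contrapositive. I expect the boundary-point construction to be the main obstacle: the naive choice of extreme points fails, since for the cone over a solid cylinder all points of a circular rim share a common two-dimensional face and their connecting segments lie in $\partial C$; the passage to exposed rays of the dual cone, and the choice of relative-interior points of the associated exposed faces, is precisely the adaptation needed to guarantee the pairwise interior-segment property.
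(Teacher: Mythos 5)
Your proposal is correct and follows essentially the same route as the paper: both directions are handled identically, with the non-polyhedral case reduced to Corollary \ref{cor:7.2} (with $\eta=2$) via Proposition \ref{prop:7.5} and infinitely many exposed rays of $C^*$ obtained from Straszewicz's theorem. The only (immaterial) difference is in producing the boundary points: the paper takes the exposing point $w_i\in C$ of the exposed ray $\mathbb{R}_+\psi_i$ of $C^*$ directly from the definition, whereas you take a relative-interior point of the exposed face $\{x\in C\colon\varphi_i(x)=0\}$ and verify it has the same property, namely that the only functionals in $C^*$ vanishing there are the nonnegative multiples of $\varphi_i$, which is exactly what makes the pairwise connecting segments lie in $C^\circ$.
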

\begin{proof}
It  is known that if $C$ is a  closed polyhedral cone with nonempty interior, then $(C^\circ,d_C)$ can be isometrically embedded into  $(\mathbb{R}^m,\|\cdot\|_\infty)$, where $m$ is the number of facets of $C$, see \cite[Lemma 2.2.2]{LNBook}. 

To prove the converse, let $C$ be a closed non-polyhedral cone with nonempty interior in a finite-dimensional  vector space $V$. 
As $C$ is a closed non-polyhedral cone with nonempty interior,  $C^*$ is a also a closed non-polyhedral cone with nonempty interior in $V^*$. So, $C^*$ has infinitely many extreme rays. By the cone version of Strazewicz's theorem \cite[p.167]{Rock}, $C^*$ has infinitely many exposed rays. Let $\xi\in C^*$ be a strictly positive functional and let  $\Sigma_\xi^\circ=\{x\in C^\circ \colon \xi(x)=1\}$.

For each integer $i\geq 1$, select distinct $\psi_i\in\partial C^*$ such that $F_i=\{\mu \psi_i\colon \mu\geq 0\}$ is an exposed ray of $C^*$. This means that there exists $w_i\in V^{**}=V$  with $w_i\in\partial C$ with $\xi(w_i)=1$ such that 
$F_i=\{\varphi\in C^*\colon \varphi(w_i)=0\}$. 
So, $\varphi(w_i)>0$ whenever $\varphi\in C^*$ and $\varphi\neq \mu \psi_i$ for all $\mu\geq 0$. 

Clearly, if $i\neq j$ and $0<\lambda<1$, then  $\varphi(\lambda w_i+(1-\lambda)w_j)>0 $ for all $\varphi\in C^*\setminus\{0\}$. This implies that $\lambda w_i+(1-\lambda)w_j\in C^\circ$ for all $i\neq j$ and $0<\lambda<1$, see \cite[Theorem 11.2]{Rock}.

Take $p\in \Sigma_\xi^\circ$ fixed. For $i\geq 1$  and  $0<t<1$ let 
\[
\gamma_i(t) = tw_i + (1-t)p.
\] 
As the norm topology of $d_C$ coincides with the topology on $C^\circ$, the maps, $t\mapsto d_C(\gamma_i(t),p)$, are continuous on $(0,1)$ for all $i\geq 1$. Moreover, $d_C(\gamma_i(t),p)\to\infty$ as $t\to 1$. Thus, for each $i\geq 1$, there exists a strictly increasing sequence $(t^i_{k})_k$ in $(0,1)$  with $t^i_{k}\to 1$ as $k\to\infty$ such that $d_C(\gamma_i(t^i_{k}),p)=k$ for all $k\geq 1$. If we let $x^i_k= \gamma_i(t^i_{k})$ in $\Sigma_\xi^\circ$, the sequences $(x^i_k)_k$ in $(C^\circ, d_C)$ satisfy the conditions of Corollary \ref{cor:7.2} by Proposition \ref{prop:7.5}, and hence $(C^\circ,d_C)$ cannot be quasi-isometrically embedded into a finite-dimensional normed space. 

\end{proof}

We can use Theorems \ref{thm:4.3} and \ref{thm:7.6} to prove the following characterization of simplicial cones, which is the analogue of \cite[Theorem 2]{FK} 
for Thompson's metric spaces. 

\begin{theorem}\label{thm:7.7 }If  $C$ is a closed finite-dimensional cone with nonempty interior, then 
$(C^\circ,d_C)$ is isometric to a finite-dimensional normed space if and only if $C$ is a simplicial cone.
\end{theorem}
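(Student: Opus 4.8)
The statement is an equivalence, so I would prove the two implications separately. The direction ``simplicial $\Rightarrow$ isometric to a normed space'' is essentially recorded in the introduction: if $C$ is $n$-dimensional and simplicial, then $(C^\circ,d_C)$ is isometric to $(\mathbb{R}^n,\|\cdot\|_\infty)$ via $\Psi=(\log\psi_1,\ldots,\log\psi_n)$, where $\psi_1,\ldots,\psi_n$ are the facet functionals (see \cite[Section 2.2]{LNBook}). So the real content is the converse: if $(C^\circ,d_C)$ is isometric to an $n$-dimensional normed space $(X,\|\cdot\|)$ with $n=\dim C$, then $C$ is simplicial. The first step is to invoke Theorem~\ref{thm:7.6}: an isometry onto a normed space is in particular a quasi-isometric embedding into a finite-dimensional normed space, so $C$ must be polyhedral. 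Thus $C$ is a closed polyhedral cone with outer facet normals $\psi_1,\ldots,\psi_N$, and by \cite[Lemma 2.2.2]{LNBook} the map $\Psi=(\log\psi_1,\ldots,\log\psi_N)$ is an isometry of $(C^\circ,d_C)$ onto its image in $(\mathbb{R}^N,\|\cdot\|_\infty)$. Since being simplicial is equivalent to $N=n$, it remains to rule out $N>n$.

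The plan for the converse is to extract a \emph{local} metric invariant at each point and then play it off against the homogeneity of $X$. The invariant I would use is the set of unique geodesic lines through a point, which is preserved by any isometry. At $p\in C^\circ$ the infinitesimal form of $d_C$ (obtained by differentiating $\Psi$) is $h\mapsto\max_i|\psi_i(h)|/\psi_i(p)$, whose unit ball is the polytope
\[
B_p=\{h\in V:\ |\psi_i(h)|\le\psi_i(p)\ \text{for }i=1,\ldots,N\}.
\]
Using Theorem~\ref{thm:4.3} I would show that a geodesic line through $p$ is a unique geodesic exactly when its direction is a vertex of $B_p$: a non-vertex direction lies in the relative interior of a positive-dimensional face of $B_p$, and that face produces, through the flat boundary pieces of $C$ detected in Theorem~\ref{thm:4.3}, a direction $z$ along which the endpoints $x',y'$ stay in $\partial C$, so the geodesic is not unique; a vertex direction leaves no such freedom. (One checks, consistently, that the type II line in direction $p$ is always such a vertex.) Hence the configuration of unique geodesic lines through $p$ faithfully records the combinatorial and metric type of $B_p$. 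Since $(C^\circ,d_C)\cong X$ and $X$ is homogeneous under translations, this local configuration must be the same at every point; transporting the norm of $X$ through the isometry, the tangent norms with unit balls $B_p$ are all linearly isometric to the single space $X$, hence to one another.

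The contradiction, and the heart of the argument, is that this rigidity fails precisely when $N>n$. All $B_p$ have their $2N$ facet normals among the fixed directions $\pm\psi_1,\ldots,\pm\psi_N$, so a linear isometry $T$ with $T(B_p)=B_{p'}$ must carry this normal set to itself, i.e.\ $T^{-*}\psi_i=\lambda_i\psi_{\pi(i)}$ for a permutation $\pi$ and scalars $\lambda_i>0$, with matching widths $\psi_i(p)=\lambda_i\,\psi_{\pi(i)}(p')$. When $N=n$ the $\psi_i$ form a basis and such a $T$ always exists (take $\pi=\mathrm{id}$ and $T^{-*}$ diagonal), which is exactly why the simplicial case is consistent. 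When $N>n$ the $\psi_i$ are linearly dependent, and as $p'$ moves transversally to the ray $\mathbb{R}_{>0}p$ the ratios $\psi_i(p')/\psi_i(p)$ cannot all be equal (else $\psi_i(p'-\mu p)=0$ for all $i$ forces $p'=\mu p$); matching the widths then forces $T^{-*}$ into the group of linear maps permuting the projective points $[\psi_1],\ldots,[\psi_N]$, which is finite modulo scalars when these points are in general position. Thus only a lower-dimensional set of $p'$ could have $B_{p'}$ linearly isometric to $B_p$, contradicting that all $B_{p'}$ in an $n$-dimensional neighbourhood are isometric to $X$; hence $N=n$. The main obstacle I anticipate is making this rigidity step uniform over \emph{all} non-simplicial polyhedral cones, including degenerate facet-normal configurations where the projective symmetry group of $\{[\psi_i]\}$ could a priori be positive-dimensional; treating those cases — either by arguing directly that no single linear isometry reconciles the varying widths when $N>n$, or by refining the vertex count of $B_p$ and showing it is non-constant in $p$ — is where the genuine work lies.
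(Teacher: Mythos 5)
Your easy direction and your first reduction (isometric $\Rightarrow$ quasi-isometrically embeddable $\Rightarrow$ polyhedral, by Theorem~\ref{thm:7.6}) are exactly what the paper does. From there you diverge, and the divergence is where the gap lies. Two steps of your local-rigidity argument are not established. First, transporting the norm of $X$ back to the ``tangent norms'' with unit balls $B_p$ presupposes that the isometry is differentiable with derivative a linear isometry from $(V,B_p)$ onto $X$; a metric-space isometry is a priori only continuous, and upgrading it to a map with linearly isometric differentials is a Myers--Steenrod-type statement that you would have to prove or cite --- knowing that $f$ permutes unique geodesic lines (your identification of the unique-geodesic directions at $p$ with the vertices of $B_p$ is a correct reading of Theorem~\ref{thm:4.3} for polyhedral cones) does not by itself yield a linear isometry of the local unit balls. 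Second, and more seriously, the heart of your argument --- that for $N>n$ the balls $B_p$, $p\in C^\circ$, cannot all be pairwise linearly isometric --- is only sketched under a general-position hypothesis on $[\psi_1],\ldots,[\psi_N]$, and you concede yourself that the degenerate configurations are unresolved. As written, the proof is therefore incomplete for an arbitrary non-simplicial polyhedral cone.

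The paper's route avoids both issues with a purely asymptotic, two-geodesic argument. Since $C$ is polyhedral but not simplicial, the cross-section $\Sigma_\varphi^\circ$ has two vertices $v_1,v_2$ and a non-vertex boundary point $u$ joined to both by segments through the interior. The type I geodesic lines $\gamma_i(t)=e^tu+e^{-t}v_i$ are unique geodesic lines by Theorem~\ref{thm:4.3} (because $v_1,v_2$ are vertices), so the isometry sends them to straight lines $\ell_1,\ell_2$ in the normed space. Estimating $M(\gamma_1(t)/\gamma_2(t))=\sup_{\varphi\in\Sigma_\xi^*}\varphi(\gamma_1(t))/\varphi(\gamma_2(t))$ shows that $d_C(\gamma_1(t),\gamma_2(t))$ stays bounded as $t\to+\infty$, which forces $\ell_1$ and $\ell_2$ to be parallel, while the functional exposing the ray through $v_2$ shows $d_C(\gamma_1(t),\gamma_2(t))\to\infty$ as $t\to-\infty$, contradicting parallelism. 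No differentiability, no classification of the balls $B_p$, and no general-position assumption are needed. If you want to complete your approach you must either prove the Finsler Myers--Steenrod step and the linear-isometry rigidity of the family $\{B_p\}$ for every non-simplicial polyhedral cone, or switch to a genuinely metric invariant; the paper's asymptotic argument is the shorter road.
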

\begin{proof}
Suppose that that $C$ is not simplicial and that $f$ is an isometry of $(C^\circ,d_C)$ onto a finite-dimensional normed space $(V,\|\cdot\|)$. Let $\varphi\in C^*$ be strictly positive and  $\Sigma_\varphi^\circ=\{x\in C^\circ\colon \varphi(x)=1\}$. By Theorem \ref{thm:7.6} we have that $C$ is a polyhedral cone, so $\Sigma_\varphi^\circ$ is the interior of a polytope. Since $C$ is not simplicial, it follows that $\Sigma_\varphi^\circ $ is not the interior of  an $(n-1)$-simplex, where $n=\dim(V)$. This implies that there exist vertices $v_1$ and $v_2$ in $\partial \Sigma_\varphi^\circ$ and $u\in\partial \Sigma_\varphi^\circ$ such that $t v_1+(1-t)u\in\Sigma_\varphi^\circ$ and $tv_2+(1-t)u\in\Sigma_\varphi^\circ$ for all $0<t<1$ and $u$ is not a vertex. The situation is depicted in the Figure \ref{fig:isom}.

\begin{figure}[h]
\begin{center}
\thicklines
\begin{picture}(100,70)(0,0)
\put(0,0){\line(1,0){100}} 
\put(50,0){\circle*{3.0}}\put(47,10){$w$}
\put(0,50){\circle*{3.0}}\put(0,60){$v_1$}
\put(100,50){\circle*{3.0}}\put(100,60){$v_2$}
\put(50,0){\line(1,1){50}} 
\put(50,0){\line(-1,1){50}} 
\end{picture}
\caption{Vertices \label{fig:isom}}
\end{center}
\end{figure}
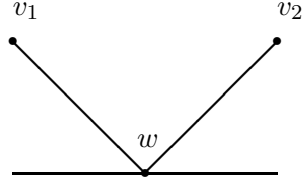

Let $\gamma_1(t)=e^t u+e^{-t}v_1$ and $\gamma_2(t)=e^t u+e^{-t}v_2$, for $t\in\mathbb{R}$, be type I geodesics in $(C^\circ,d_C)$, see Lemma \ref{lem:3.2}.  As $v_1$ and $v_2$ are vertices of $\Sigma_\varphi$, it follows from Theorem \ref{thm:4.3} that both $\gamma_1$ and $\gamma_2$ are unique geodesic lines in $(C^\circ,d_C)$. This implies that the images of $\gamma_1$ and $\gamma_2$ under the isometry $f$, which we will denote by $\ell_1$ and $\ell_2$, respectively, are straight lines in $V$, since unique geodesic lines  are mapped to unique geodesic lines by $f$. 

Now, fix $\xi\in C^\circ$ and let  $\Sigma_\xi^*=\{\varphi\in C^*:\varphi(\xi)=1\}$.  
For $x,y\in C^\circ$ we have that 
\[
M(x/y)=\sup_{\varphi\in \Sigma^*_\xi} \frac{\varphi(x)}{\varphi(y)}, 
\]
see \cite[p.34]{LNBook}. Note that for $\varphi\in\Sigma_\xi^*$ and $t\in\mathbb{R}$ we have
\[
\frac{\varphi(\gamma_1(t))}{\varphi(\gamma_2(t))}=\frac{e^t\varphi(u)+e^{-t}\varphi(v_1)}{e^t\varphi(u)+e^{-t}\varphi(v_2)}=\frac{\varphi(u)+e^{-2t}\varphi(v_1)}{\varphi(u)+e^{-2t}\varphi(v_2)}.
\] 
So, if $\varphi(u)=0$, then neither $\varphi(v_1)$ nor $\varphi(v_2)$ can be $0$, and we find that $$\frac{\varphi(\gamma_1(t))}{\varphi(\gamma_2(t))}=\frac{\varphi(v_1)}{\varphi(v_2)}<\infty$$ for all $t\in\mathbb{R}$. On the other hand, if $\varphi(u)\neq 0$, then 
\[
\frac{\varphi(\gamma_1(t))}{\varphi(\gamma_2(t))}\le\frac{\varphi(u)+\varphi(v_1)}{\varphi(u)}<\infty
\]
for all $t\geq 0$. 
Thus, 
\[
\limsup_{k\to\infty}M(\gamma_1(t_k)/\gamma_2(t_k))=\limsup_{k\to\infty}\left(\sup_{\varphi\in\Sigma_\xi^*}\frac{\varphi(\gamma_1(t_k))}{\varphi(\gamma_2(t_k))}\right)<\infty
\]
for all sequences $(t_k)_k$ with $t_k\to\infty$. Interchanging the roles of $\gamma_1$ and $\gamma_2$ yields an analogous result, from which we deduce that  
\[
\limsup_{k\to\infty}d_C(\gamma_1(t_k),\gamma_2(t_k))<\infty
\]
for all sequences $(t_k)_k$ with $t_k\to\infty$. These findings imply that for each $k\geq 1$ there exist $x_k$ on $\ell_1$ and $y_k$ on $\ell_2$ with $\sup_{k\geq 1}\|x_k-y_k\|<\infty$ such that $\|x_k\|\to\infty$ and $\|y_k\|\to\infty$ as $k\to\infty$. This implies that $\ell_1$ and $\ell_2$ are parallel.

As the extreme rays of the polyhedral cone $C$ are exposed, there exists a functional $\vartheta\in C^*$ such that $\vartheta(\mu v_2)=0$ for all $\mu\geq 0$  and $\vartheta(x)>0$ for all $x\in C\setminus\{\mu v_2\colon\mu\geq 0\}$. After scaling with an appropriate factor, we have $\vartheta\in\Sigma_\xi^*$.  Now it follows that
\[
\frac{\vartheta(\gamma_1(t))}{\vartheta(\gamma_2(t))}=\frac{e^{t}\vartheta(u)+e^{-t}\vartheta(v_1)}{e^{t}\vartheta(u)+e^{-t}\vartheta(v_2)}=\frac{\vartheta(u)+e^{-2t}\vartheta(v_1)}{\vartheta(u)}\to\infty
\]
as $t\to-\infty$, and hence  $M(\gamma_1(t)/\gamma_2(t))\to\infty$ as $t\to - \infty$. 
This, however, implies that 
\[
\lim_{t\to-\infty}d_T(\gamma_1(t),\gamma_2(t))=\infty
\] 
and hence  $\ell_1$ and $\ell_2$ are not parallel, which is absurd.  Thus, $C$ must be a simplicial cone.

Conversely, if $C$ is a simplicial cone in a, say $n$-dimensional vector space $X$, then there are linearly independent $v_1,\ldots,v_n\in X$ such that $C=\{\sum_{k=1}^n\alpha_k v_k\colon \alpha_k\geq 0 \mbox{ for }1\leq k\leq n\}$. 
The map $T:X\to\mathbb{R}^n$ given by $\sum_{k=1}^n\alpha_k x_k\mapsto (\alpha_1,\ldots,\alpha_k)$ is a bijective linear map with $T ( C ) =\mathbb{R}^n_+$.  and hence $T$ is an isometry of $(C^\circ,d_C)$ onto $((\mathbb{R}^n_+)^\circ,d_{
\mathbb{R}^n_+})$. Recall that $((\mathbb{R}^n_+)^\circ,d_{\mathbb{R}^n_+})$ is  isometric to $(\mathbb{R}^n,\|\cdot\|_\infty)$, see \cite[Proposition 2.2.1]{LNBook}. In fact, the reader can easily check that the coordinatewise log function is an isometry.   
\end{proof}

\section{Isometries on strictly convex cones}
In this section we will analyze the isometries of $(C^\circ,d_C)$ when $C$ is a closed strictly convex cone with nonempty interior  in a finite-dimensional vector space $V$. Recall that $C$ is {\em strictly convex} if for each linearly independent $x,y\in\partial C$ we have that 
\[
\lambda x+(1-\lambda)y\in C^\circ \mbox{\quad for all }0<\lambda <1.
\]

If $C$ is a closed cone with nonempty interior  in a normed space $V$ and $T\colon V\to V$ is an invertible linear map with $T ( C)=C$, then  $T$ is an isometry of $(C^\circ,d_C)$. 
Given a sclosed cone with nonempty interior  in a  finite-dimensional vector space $V$, we let $\mathrm{Aut}( C)=\{T\in\mathrm{GL(V)}\colon T( C)=C\}$ and we let $\mathrm{Isom}( C)$ be the set of maps $g\colon C^\circ\to C^\circ$ such that $g$ is an isometry of $(C^\circ,d_C)$.  So, $\mathrm{Aut}( C)$ is a subgroup of $\mathrm{Isom}( C)$. It is known \cite{Bo} that $\mathrm{Aut}( C)\neq \mathrm{Isom}( C)$, even if $C$ is a strictly convex cone.  Consider, for example,  the cone, $\Pi_2(\mathbb{R})$, of positive semi-definite matrices in the space of $2\times 2$ symmetric matrices. This is a $3$-dimensional, strictly convex, closed cone. In fact, $\Pi_2(\mathbb{R})$ is 
order-isomorphic with the 3-dimensional Lorentz cone, see \cite[p.\,44]{LNBook}. The map $h\colon  \Pi_2(\mathbb{R})^\circ\to \Pi_2(\mathbb{R})^\circ$ given by $h(A) = A^{-1}$ is an isometry under Thompson's metric, as $h$ is an order-reversing homogeneous degree $-1$ involution, see \cite[Corollary 2.1.5]{LNBook}. Obviously, $h\not\in \mathrm{Aut}( C)$. It turns out, however, that $h $ is projectively linear

\begin{definition}
A map $f\colon C^\circ\to C^\circ$ is {\em projectively linear} if there exists $T\in\mathrm{Aut}( C)$ such that for each $x\in C^\circ$, 
\[
f(x) = \lambda_x T(x)\mbox{\quad for some }\lambda_x>0.
\]
\end{definition}
Note that in the example above,  if 
\[
A=\left [ \begin{array}{cc} a & b \\ b & c\end{array}\right ]\in \Pi_2(\mathbb{R})^\circ,
\]
then 
\[
h(A) = A^{-1} = \frac{1}{\mathrm{det}\,(A)}\left [ \begin{array}{cc} c & -b \\- b & a\end{array}\right ],
\]
which shows that  $h$ is projectively linear. 

\begin{theorem}\label{thm:8.1}
If $C$ is a closed strictly convex cone with nonempty interior  in an  $n$-dimensional vector space $V$  and $n\geq 3$, then every $f\in\mathrm{Isom}(C )$ is projectively linear. 
\end{theorem}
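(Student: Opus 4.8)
The plan is to reduce the statement to de la Harpe's theorem on isometries of strictly convex Hilbert geometries by passing to the projectivisation of $C^\circ$. Throughout I write $R_x=\{tx\colon t>0\}$ for the ray (type II geodesic line) through $x$, and I fix a strictly positive $\varphi\in C^*$, identifying the projectivisation of $C^\circ$ with $\Sigma_\varphi^\circ=\{x\in C^\circ\colon\varphi(x)=1\}$. First I would record two preliminary facts. Since $C$ is strictly convex its boundary contains no nondegenerate segment outside a single extreme ray, so the obstruction in Theorem \ref{thm:4.3} never occurs: for every pair of linearly independent $x\sim_C y$ with $M(x/y)=M(y/x)$ the type I geodesic is unique, and together with Proposition \ref{prop:4.1} this shows that \emph{every} type I and type II geodesic line in $(C^\circ,d_C)$ is a unique geodesic line. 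Consequently every $f\in\mathrm{Isom}(C)$ maps unique geodesic lines bijectively to unique geodesic lines, and (via Corollary \ref{cor:3.2}) each such image is again of type I or II. Second, a short computation using $M(\lambda x/\mu y)=(\lambda/\mu)M(x/y)$ gives, for $x\sim_C y$,
\[
\inf_{\lambda,\mu>0} d_C(\lambda x,\mu y)=\tfrac12\,\delta_C(x,y),
\]
so that the Hilbert distance between two rays is \emph{recovered} from the Thompson metric.

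The crux is to show that $f$ carries rays to rays; granting this the proof concludes quickly. To prove ray preservation I would characterise rays metrically. Call a subset of $(C^\circ,d_C)$ a \emph{flat} if it is an isometric copy of $(\mathbb{R}^2,\|\cdot\|_\infty)$; by Lemmas \ref{lem:3.00} and \ref{lem:3.1} every two-dimensional subcone $C(x,y)^\circ$ is a flat. If $x,y_1,y_2$ are linearly independent, then $\mathrm{span}\{x,y_1\}\cap\mathrm{span}\{x,y_2\}=\mathrm{span}\{x\}$, whence $C(x,y_1)^\circ\cap C(x,y_2)^\circ=R_x$; thus (and here the hypothesis $n\geq 3$ enters) every ray is the intersection of two \emph{distinct} flats. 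The key lemma, and what I expect to be the main obstacle, is the converse structural statement: every flat in $(C^\circ,d_C)$ is a two-dimensional subcone $C(x,y)^\circ$. Since a type I geodesic spans a two-dimensional subspace, it would then lie in a \emph{unique} flat. Strict convexity must be used essentially here to exclude ``fake'' flats not arising from subcones, and this is the delicate point.

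Granting the flat-classification lemma, ray preservation follows: as $f$ is a global isometry it maps flats onto flats, so $f(R_x)$ lies in the two distinct flats $f(C(x,y_1)^\circ)$ and $f(C(x,y_2)^\circ)$. A type I geodesic lies in only one flat, so $f(R_x)$ cannot be of type I; being a unique geodesic line, it must be of type II, i.e. a ray. Hence $f$ permutes rays and descends to a bijection $\bar f$ of the projectivisation $\Sigma_\varphi^\circ$. Because $f$ preserves $d_C$ and permutes rays, the displayed infimum identity shows that $\bar f$ preserves $\delta_C$, which on $\Sigma_\varphi^\circ$ is exactly Hilbert's metric; thus $\bar f\in\mathrm{Isom}(\Sigma_\varphi^\circ,\delta_C)$.

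Finally I would invoke de la Harpe. Since $C$ is strictly convex, $\Sigma_\varphi^\circ$ is a strictly convex bounded open convex set of dimension $n-1\geq 2$, so de la Harpe's theorem \cite{dlH} applies and $\bar f$ is the restriction of a projective transformation preserving $\Sigma_\varphi^\circ$. Such a transformation is induced by some $T\in\mathrm{GL}(V)$ with $T(C)=C$, that is, $T\in\mathrm{Aut}(C)$. Then $[\,f(x)\,]=\bar f([x])=[\,Tx\,]$ in the projectivisation means that $f(x)$ and $Tx$ lie on the same ray, so $f(x)=\lambda_x T(x)$ for some $\lambda_x>0$, i.e. $f$ is projectively linear. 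In summary, the skeleton is (i) unique geodesics via Theorem \ref{thm:4.3}, (ii) ray preservation via the flat characterisation, (iii) descent to a Hilbert isometry via the infimum identity, and (iv) de la Harpe plus a lift; the genuinely hard step is (ii), specifically the classification of flats as subcones.
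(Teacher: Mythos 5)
Your overall strategy --- show that $f$ preserves the two types of unique geodesic lines, descend to a Hilbert's metric isometry of $\Sigma_\varphi^\circ$, and invoke de la Harpe --- is the same as the paper's, and your steps (i), (iii) and (iv) are sound. In particular, the observation that strict convexity rules out the obstruction in Theorem \ref{thm:4.3} (so that every type I and type II line is a unique geodesic line), and the identity $\inf_{\lambda,\mu>0}d_C(\lambda x,\mu y)=\tfrac{1}{2}\delta_C(x,y)$, which gives an arguably cleaner descent to the Hilbert isometry than the paper's computation with the normalisation $\xi=\lambda y$, are both correct. The problem is step (ii). Your entire argument for ray preservation rests on the classification of flats: that every isometric copy of $(\mathbb{R}^2,\|\cdot\|_\infty)$ in $(C^\circ,d_C)$ is a two-dimensional subcone $C(x,y)^\circ$. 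You state this as the key lemma and do not prove it, and it is not a routine verification: uniqueness of a geodesic \emph{inside} a flat says nothing about uniqueness in $C^\circ$, so you cannot even identify the images of the diagonals of $\mathbb{R}^2$ under a flat embedding with type I or type II lines without further work. Moreover, for non-strictly-convex cones the statement is badly false (for $C=\mathbb{R}^3_+$ one has $(C^\circ,d_C)\cong(\mathbb{R}^3,\|\cdot\|_\infty)$, and the graph of any function that is $1$-Lipschitz for the sup-norm is a flat), so any proof must exploit strict convexity in an essential and currently unspecified way. As it stands the proposal is incomplete at exactly the point you yourself identify as the crux.

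For comparison, the paper avoids flats altogether. To show that no type I line $\gamma$ through $x$ can be mapped to a type II line, it approximates $\gamma$ pointwise by other type I lines $\gamma_k$ through $x$ (this is where $n\ge 3$ enters, via a perturbation of the boundary endpoints); since only one type II line passes through $f(x)$, the images $f(\gamma_k)$ are type I, whence $\delta_C([x],[y_k])=2d_C(x,y_k)=2d_C(f(x),f(y_k))=\delta_C([f(x)],[f(y_k)])$ for $y_k=\gamma_k(t)$, and letting $k\to\infty$ and using continuity of Hilbert's metric forces $0<\delta_C([x],[y])=\delta_C([f(x)],[f(y)])=0$, a contradiction. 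To salvage your route you would have to either prove the flat classification for strictly convex cones or replace it by a soft limiting argument of this kind.
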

\begin{proof}
Let $f\in\mathrm{Isom}( C)$. We will first show that $f$ maps type I geodesic lines to type I geodesic lines, and type II geodesic lines to type II geodesic lines. Suppose, by way of contradiction, that $\gamma$ is a type I geodesic line that is mapped to a type II geodesic line under $f$. Then $K=\mathrm{span}\{\gamma\}\cap C$ is closed 2-dimensional cone. So, by \cite[Lemma A.5.1]{LNBook} there exists linearly independent $u_0,v_0\in\partial C$ such that 
\[
K=\{\alpha u_0+\beta v_0\colon\alpha,\beta\geq 0\}.
\]
From Lemma \ref{lem:3.2}  we know that, after rescaling $u_0$ and $v_0$, we can write $\gamma$ as the image of 
\[
\gamma(t) =\frac{1}{2}(e^tu_0+e^{-t}v_0)
\]
where $t\in\mathbb{R}$. Let $x=\gamma(0)$ and $\varphi\in (C^*)^\circ$ with $\varphi (u_0)=1=\varphi (v_0)$. As $\dim C\geq 3$, $\Sigma_\varphi=\{v\in C\colon\varphi(v)=1\}$ is a compact convex set with $\dim \Sigma_\varphi \geq 2$. Thus, there exists a sequence $(u_k)_k$ in $\partial C$ with $\varphi(u_k)=1$  and $u_k\neq u_0$ for all $k\geq 1$ such  that $\|u_k- u_0\|\to 0$ as $k\to\infty$. Let $v_k\in\partial C$ be the point of intersection of the straight line through $u_k$ and $x$ and $\partial C$. So, $\|v_k-v_0\|\to 0$ as $k\to\infty$, since $x=\frac{1}{2}(u_0+v_0)$. For each $k\geq 1$ there exists $0<\alpha_k<1$ such that  $x =\alpha_k u_k+(1-\alpha_k)v_k$. Note that $\alpha_k\to 1/2$ as $k\to\infty$, and hence the geodesic paths,
\[
\gamma_k(t) = \alpha_ke^t u_k + (1-\alpha_k)e^{-t}v_k\mbox{\quad for }t\in\mathbb{R},
\]
are type I geodesics by Lemma \ref{lem:3.2} and $\gamma_k(t) \to\gamma(t) $ pointwise as $k\to\infty$. 

Now fix $y=\gamma(t)$ with $t\neq 0$ and consider the sequence $(y_k)_k$ with $y_k=\gamma_k(t)$. As the norm topology coincides with the Thompson's  metric topology on $C^\circ$, we have that $d_C(y_k,y)\to 0$ as $k\to\infty$. For $z\in C\setminus\{0\}$, write $[z]=z/\varphi(z)$. So, 
\[
\delta_C([y],[y_k])=\delta_C(y,y_k)\leq 2 d_C(y,y_k)\to 0
\]
as $k\to\infty$. 
Note that there is only one type II geodesic through $f(x)$, and hence $f(\gamma_k)$ must be a type I geodesic line  for all $k\geq 1$, as $f$ is an isometry. So, 
\begin{eqnarray*}
\delta_C([x],[y_k]) =\delta_C(x,y_k) & =  & 2d_C(x,y_k)\\
 & = & 2d_C(f(x),f(y_k)) =\delta_C([f(x)],[f(y_k)]).
\end{eqnarray*}
As $\gamma$ is mapped to a type II geodesic, the previous equality implies that $0<\delta_C([x],[y]) =\delta_C([f(x)],[f(y)])=0$, which is impossible. 
Thus $f$ maps type I geodesic lines  to type I geodesic lines. Also, $f$ has to map type II geodesic lines to type II geodesic lines, as otherwise $f^{-1}\in\mathrm{Isom}(C )$ maps a type I geodesic line to a type II geodesic line. 

Let $\Sigma_\varphi^\circ =\{x\in C^\circ\colon\varphi(x)=1\}$. 
Next we will show that $g\colon\Sigma_\varphi^\circ\to\Sigma_\varphi^\circ$ given by, 
\[
g(x) =\frac{f(x)}{\varphi(f(x))}\mbox{\quad for all }x\in\Sigma_\varphi^\circ,
\]
is an isometry under $\delta_C$. For $x$ and $y$ in $ \Sigma_\varphi^\circ$ distinct there exists a $\lambda > 0 $ such that  $x$ and $\lambda y$ lie on a type I geodesic in $(C^\circ, d_C)$. Write $\xi =\lambda y$. As  $M(x/\xi)=M(\xi/x)$, we have that 
\[
2d_C(x,\xi) =\delta_C(x,\xi) =\delta_C(x,y).
\]
Now using the fact that $f$ maps type I geodesic lines to type I geodesic lines we get that $M(f(x)/f(\xi))=M(f(\xi)/f(x))$. Also, as $f$ maps type II geodesic lines  to type II geodesic lines, 
\[
g(\xi) = \frac{f(\xi)}{\varphi(f(\xi))} = \frac{f(\lambda y)}{\varphi(f(\lambda y))} = \frac{f(y)}{\varphi(f(y))} =g(y).
\] 
Thus, 
\[
2d_C(x,\xi)= 2d_C(f(x),f(\xi)) =\delta_C(g(x),g(\xi)) =\delta_C(g(x),g(y)).
\]
It now follows that $g$ is an isometry under $\delta_C$. 
As $\Sigma_\varphi^\circ$ is a strictly  convex set, we deduce from \cite[Proposition 3]{dlH} that $f$ is a projectively linear map. 
\end{proof}
If $C$ is not strictly convex, $f\in \mathrm{Isom}( C)$ need not be projectively linear. Indeed, the map $(x,y,z)\mapsto (x,y, z^{-1})$ on the interior of the standard positive cone $\mathbb{R}^3_+=\{(x,y,z)\colon x,y,z\geq 0\}$ is an isometry under Thompson's metric but not projectively linear. It would be interesting to characterize those finite-dimensional  closed cones $C$ for which all Thompson's metric isometries are projectively linear. It would also be interesting to know for which cones $C$ we have $\mathrm{Aut}(C )=\mathrm{Isom}(C )$.

\subsection*{Acknowledgement} The authors are very grateful to Roger Nussbaum for sharing his proof of an earlier version of Theorem \ref{thm:5.1}. His remarks and insights have been very beneficial to us.

\end{document}